\newtheorem{theorem}{Theorem}[section]
\newtheorem{lemma}[theorem]{Lemma}
\newtheorem{proposition}[theorem]{Proposition}
\theoremstyle{definition}
\newtheorem{remark}[theorem]{Remark}
\newcommand{\R}{\mathbf{R}}
\newcommand{\eps}{\varepsilon}
\newcommand{\oo}{\infty}
\newcommand{\nb}{\nabla}
\newcommand{\ov}{\overline}
\newcommand{\dw}{\downarrow}
\newcommand{\Om}{\Omega}
\newcommand{\ds}{\displaystyle}
\newcommand{\be}{\begin{equation}}
\newcommand{\ee}{\end{equation}}
\newcommand{\oee}{\ov{\eta}_\eps}
\DeclareMathOperator{\supp}{supp}
\newcommand\wtilde{\widetilde}
\newcommand\Gxi{\mathcal{G}_\xi}
\newcommand\lt{\left}
\newcommand\rt{\right}
\newcommand\F{\mathcal{F}}
\newcommand\jj{J}
\newcommand\J{\mathcal{J}}
\newcommand\rhob{{\ov\rho}}
\def\void{\varnothing}
\def\Div{\textup{div}\,}
\def\HH{\mathcal{H}}
\def\les{\lesssim}
\def\ges{\gtrsim}
\newcommand{\bsigma}{{\sigma_{\lambda,K}}}
\title{Phase segregation for binary mixtures of Bose-Einstein Condensates}
\author{M. Goldman\footnote{ CNRS, UMR 7598, Laboratoire Jacques-Louis Lions, F-75005, Paris, France, email: goldman@math.univ-paris-diderot.fr} \and B. Merlet\footnote{Laboratoire P. Painlev\'e, CNRS UMR 8524, Universit\'e Lille 1,
F-59655 Villeneuve d'Ascq Cedex, France, email: benoit.merlet@math.univ-lille1.fr}}
\date{}
\begin{document}
\maketitle
\begin{abstract}
 We study the strong segregation limit for mixtures of Bose-Einstein condensates modeled by a Gross-Pitaievskii functional. Our first main result is that in presence of a trapping potential, for different intracomponent
 strengths, the Thomas-Fermi limit is sufficient to determine the shape of the minimizers. Our second main result is that for asymptotically equal intracomponent strengths,  one needs to go to the next order. The relevant limit is a weighted isoperimetric problem. 
 We then study the minimizers of this limit problem, proving radial symmetry or symmetry breaking for different values of the parameters. 
 We finally show that in the absence of a confining potential, even for non-equal intracomponent strengths, one needs to study a related isoperimetric problem to gain information about the shape of the minimizers.   
\end{abstract}

\section{Introduction}

In this paper, we investigate the asymptotic behavior as $\eps$ goes to zero of the Gross-Pitaievskii functional
% \begin{equation}\label{functionalintro}F_\eps(\eta)=\ \eps \int_{\R^2}  |\nb \eta_1|^2 + |\nb \eta_2|^2 + \frac1\eps\int_{\R^2}\frac{g_1}2 \eta_1^4   + \frac{g_2}2 \eta_2^4  +  K \eta_1^2\eta_2^2 + V(x)(\eta_1^2 +\eta_2^2),\end{equation}
% with $\eta=(\eta_1,\eta_2)$ satisfying the mass constraint
% \[
%  \int_{\R^2} \eta_1^2=\alpha_1 \qquad \textrm{and} \qquad \int_{\R^2} \eta_2^2 =\alpha_2.
% \]
% In~\eqref{functionalintro}, $g_1$, $g_2$ (respectively $K$) represent the intracomponent (respectively the intercomponents) coupling strengths of the condensates while $V$ 
% is a trapping potential. Although most of our results extend to more general potentials,
% we will restrict ourselves for simplicity to the harmonic potential $V=|x|^2$. Up to rescaling, we can further assume that $g_1=1\le g_2$ and we will simply write $g_2=g$ so that our functional reads:

\be\label{funcintrobis}
F_\eps(\eta)\,=\, \eps \int_{\R^2}  |\nb \eta_1|^2 + |\nb \eta_2|^2\, + \frac1\eps\int_{\R^2}\frac{1}2 \eta_1^4   + \frac{g}2 \eta_2^4  +  K \eta_1^2\eta_2^2 +  (\eta_1^2 +\eta_2^2)V,\ee
under the mass constraint
\be\label{constrainteta}
\int_{\R^2} \eta_1^2=\alpha_1 \qquad \textrm{and} \int_{\R^2} \eta_2^2=\alpha_2.
\ee
This functional arises in the study of two component Bose-Einstein condensates. It has been widely studied, both in the physical and mathematical literature (see \cite{GolRoyo} and the references therein or the book \cite{AfLivre}).
The potential $V$ is a trapping potential. {For simplicity, we only consider here} the harmonic potential $V=|x|^2$. The constant $g$, measures the asymmetry between the intracomponent repulsive strengths of each component 
and $K$ represents the intercomponent repulsive strength. Without loss of generality, we will take here $g\ge 1$.
The case $K<\sqrt{g}$, where mixing of the two condensates occurs  has recently been well understood in  \cite{AftNorSour}. On the other hand,  the case $K>\sqrt{g}$, 
where it is expected both experimentally \cite{McCarronETall,PaJILA}, numerically \cite{MaAf,OhSten} and theoretically \cite{KaTsuUe, Aochui,Timmermans,barankov,BVS} that segregation occurs, 
is maybe not yet so well understood. In the symmetric case $g=1$, it has been proved in \cite{AftRoyo,GolRoyo} that, as expected from the physics literature,
the Thomas-Fermi limit i.e. the limit of $\eps F_\eps$ only imposes segregation but does not give any information about the actual shape of the minimizers. To gain such information, one has to go to the next order
since $F_\eps- \min_{\eta} F_{\eps}(\eta)$ converges in the sense of $\Gamma-$convergence \cite{braidesbook} to a weighted isoperimetric problem. In this paper, we focus on the asymmetric case $g>1$ and show that the situation is radically different. 

Let us introduce the Thomas-Fermi energy:
\begin{equation}\label{thomasfermiintro}
  E ( \rho) = \  %E_1 ( \rho_1) + E_2 ( \rho_2) + E_{12}(\rho)\ = \ 
  \int_{\R^2} \lt[\frac{1}2 \rho_1^2   + \frac{g}2 \rho_2^2  +  K \rho_1\rho_2 + (\rho_1+\rho_2)V\rt].
\end{equation}

Our first main theorem is the following:

\begin{theorem}\label{maintheointro}
For every $\alpha_1,\alpha_2, g, K>0$, with $K\ge\sqrt{g}>1$, there exists a unique minimizer $\rho^{0}=(\rho^0_{1},\rho^{0}_{2})$ of~\eqref{thomasfermiintro} under the volume constraints
\be\label{ctrtmasse}\int_{\R^2} \rho_1=\alpha_1 \qquad \textrm{and} \qquad \int_{\R^2} \rho_2=\alpha_2.\ee
This minimizer $\rho^{0}$ is radially symmetric (see~Lemma~\ref{lemcpct1} for explicit formulas for $\rho^0$ and $E_0=E(\rho^0)$).  \\
Moreover, we have the following stability result: there exists $C>0$ (which depends only on $\alpha_1,\alpha_2$ and $g$) such that if $\rho$ satisfies the constraints~\eqref{ctrtmasse} then\footnote{we denote by $\|\cdot\|_p$ the $L^p$ norm}
\be\label{quantiestim}
\|\rho-\rho^0\|_1\le C\sqrt{E(\rho)-E_0}.
\ee
\end{theorem}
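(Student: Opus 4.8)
The plan is to treat the existence/uniqueness and the radial symmetry of $\rho^0$ somewhat quickly (since these follow from the explicit Euler--Lagrange analysis referenced in Lemma~\ref{lemcpct1}), and to concentrate on the quantitative stability estimate~\eqref{quantiestim}, which is the real content. Write the energy as $E(\rho)=\int_{\R^2} Q(\rho) + (\rho_1+\rho_2)V$, where $Q(\rho)=\tfrac12\rho_1^2+\tfrac g2\rho_2^2+K\rho_1\rho_2$ is a quadratic form on $\R^2$. Since $K\ge\sqrt g>1$ we have $K^2\ge g$, so $Q$ is \emph{not} positive definite; its kernel direction (when $K=\sqrt g$) or indefinite direction is $(\sqrt g,-1)$ (up to scaling), i.e. the direction along which mass can be transferred between the components at low quadratic cost. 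This degeneracy is exactly why one cannot hope for an $L^2$ bound and must settle for the $L^1$ bound in~\eqref{quantiestim}: the square-root rate is sharp.

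The first step is a change of variables adapted to $Q$. Set $u=\rho_1+\rho_2$ (total density) and $v$ a suitably normalized combination in the indefinite/degenerate direction, chosen so that $Q(\rho)=\tfrac12 a\,u^2 + (\text{cross terms}) + (\text{something})v^2 + \dots$; more precisely one diagonalizes $Q$ over the cone $\{\rho_1,\rho_2\ge0\}$. On that cone $Q$ is actually convex as a function of $u$ for fixed "angle", and the key point is that $Q(\rho)\ge \tfrac12 c\, u^2$ for a constant $c>0$ together with an extra nonnegative term controlling how far $(\rho_1,\rho_2)$ is from being a pure-component (segregated) configuration. Then $E(\rho)-E_0$ controls: (i) $\|u-u^0\|$ in some strong norm via strict convexity of $t\mapsto \tfrac12 c t^2 + tV$ — this gives $\|u-u^0\|_1\lesssim \sqrt{E(\rho)-E_0}$ by a standard quantitative-convexity/Taylor argument around the minimizer, using the Euler--Lagrange equation $cu^0 = (\mu - V)_+$; and (ii) a defect term measuring overlap $\int \rho_1\rho_2$ (which is $0$ for $\rho^0$ since the minimizer is segregated) plus a term forcing each $\rho_i$ to sit on the right side of the interface $\{x:|x|=r_0\}$.

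The second and harder step is to upgrade control of $u=\rho_1+\rho_2$ and of the overlap into control of the pair $(\rho_1,\rho_2)$ itself in $L^1$. Here is where the mass constraints~\eqref{ctrtmasse} enter crucially: without them the indefinite direction of $Q$ would make the problem genuinely unstable, but fixing $\int\rho_1=\alpha_1$, $\int\rho_2=\alpha_2$ pins down the amount of each phase. The argument I would run: from $\|u-u^0\|_1$ small and $\int\rho_1\rho_2$ small we deduce that $\rho$ is close to a configuration of the form $u^0\mathbf{1}_{A}$ for component 1 and $u^0\mathbf{1}_{A^c}$ for component 2 (modulo the small defects), for some set $A$; the mass constraint on $\rho_1$ then forces $\int_A u^0 \approx \alpha_1 = \int_{B_{r_0}} u^0$; combined with the fact that $u^0$ is radially decreasing and the "wrong-side" penalty from step (i), a quantitative argument (in the spirit of a quantitative isoperimetric/rearrangement inequality, but here much softer because $u^0$ is an explicit strictly-monotone profile) shows $|A\triangle B_{r_0}|\lesssim \sqrt{E(\rho)-E_0}$, whence $\|\rho_1-\rho_1^0\|_1$ and $\|\rho_2-\rho_2^0\|_1$ are both $\lesssim\sqrt{E(\rho)-E_0}$.

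The main obstacle is precisely this last passage: quantifying "$\rho$ nearly segregated with the right masses $\Rightarrow$ its interface is close to the sphere $\{|x|=r_0\}$", with the correct square-root rate and with a constant depending only on $\alpha_1,\alpha_2,g$ (notably \emph{not} on $K$, which is consistent with the degeneracy being worst at $K=\sqrt g$). I expect to handle it by a stratification/layer-cake argument on the super-level sets of $u^0$: for each height $t$, the constraint pushes the measure of $\{\rho_1>0\}$ at that height to match $|\{u^0>t\}\cap B_{r_0}|$ up to the defect, and integrating in $t$ against the (bounded, nondegenerate) derivative of the profile $u^0$ converts an $L^1$-in-$x$ statement into the desired estimate. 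A secondary technical point is that $\rho$ is only assumed to satisfy the constraints, not to be a minimizer or even bounded, so all convexity/Taylor expansions must be done with one-sided (subgradient) inequalities valid for arbitrary admissible $\rho\ge0$; this is routine but must be stated carefully.
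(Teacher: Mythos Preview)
Your outline has the right flavor (reduce to $K=\sqrt g$, exploit the degenerate direction of the quadratic, aim for an $L^1$ bound with the square-root rate), but the specific decomposition you propose does not quite work, and the crucial ``wrong-side penalty'' you invoke in the second step is never actually produced by your first step.

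The main gap is the decoupling. For $K=\sqrt g$ the quadratic is $Q(\rho)=\tfrac12(\rho_1+\sqrt g\,\rho_2)^2$, so the natural ``good'' scalar is $w=\rho_1+\sqrt g\,\rho_2$, not $u=\rho_1+\rho_2$. But the potential term is $V(\rho_1+\rho_2)=Vw-(\sqrt g-1)V\rho_2$, so the energy does \emph{not} split into a strictly convex functional of a single scalar plus a nonnegative overlap term. Correspondingly, neither $u^0$ nor $w^0$ solves an Euler--Lagrange equation of the form $c\,u^0=(\mu-V)_+$: in fact $u^0=\rho_1^0+\rho_2^0$ has a genuine jump at $|x|=r_0$ (the gap $\sigma_+>\sigma_-$), and $w^0$ is only continuous with a kink there. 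So the step ``control $\|u-u^0\|_1$ by strict convexity of the scalar problem, using $cu^0=(\mu-V)_+$'' fails as written, and with it the advertised wrong-side penalty never materializes.

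What the paper does instead is more direct and sidesteps this obstruction. One writes $\rho=\rho^0+\delta\rho$ and expands
\[
E(\rho)-E(\rho^0)\;=\;L_1(\delta\rho_1)+L_2(\delta\rho_2)+\tfrac12\!\int_{\R^2}(\delta\rho_1+\sqrt g\,\delta\rho_2)^2 .
\]
The key observation is that, after using the mass constraints $\int\delta\rho_j=0$, each linear term becomes
\[
L_j(\delta\rho_j)=\int_{\{\rho_j^0=0\}} c_j(|x|)\,\delta\rho_j,
\]
where $c_j$ is an explicit radial function, strictly positive on $\{\rho_j^0=0\}$ and vanishing linearly at the interface. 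Since $\delta\rho_j\ge 0$ on $\{\rho_j^0=0\}$ (because $\rho_j\ge 0$), both $L_j\ge 0$; this simultaneously proves minimality/uniqueness and \emph{is} the wrong-side penalty. The stability inequality then comes from a region-by-region case split: wherever $\delta\rho_1$ and $\sqrt g\,\delta\rho_2$ do not nearly cancel, the quadratic term gives $L^1$ control by Cauchy--Schwarz; on the bad set where they do nearly cancel (so $0\le\delta\rho_1\approx -\sqrt g\,\delta\rho_2\le 2\sqrt g\,\sup\rho_2^0$), one uses $\int c_1\,\delta\rho_1$ together with $c_1(r)\gtrsim r-r_0$ and a bathtub/rearrangement argument to obtain $\bigl(\int\delta\rho_1\bigr)^2\lesssim L_1$. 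That is the origin of the square root in~\eqref{quantiestim}.

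Your layer-cake idea is morally the same rearrangement mechanism, but it only goes through once you have those linear functionals $L_j$ with positive weights $c_j$ vanishing linearly on the interface; they do not fall out of a ``$u$ plus overlap'' decomposition.
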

Let us point out that the radial symmetry of the minimizer of the Thomas-Fermi energy was not completely expected (see \cite{BVS,MaAf} or the discussion in \cite[Sec. 1.3.4]{AftRoyo}). 
As a consequence of this stability result, we prove that minimizers of $F_\eps$ converge to $(\sqrt{\rho^0_1},\sqrt{\rho^0_2})$.
\begin{theorem}
\label {thmGPtoTF}
 There exists $C> 0$ such that for $\eps\in(0,1]$, any minimizer $\eta^\eps$ of $F_\eps$ under the constraints \eqref{constrainteta}
satisfies
 \be\label{quantconv1}
 \lt\|\eta^\eps-\lt(\sqrt{\rho_1^0},\sqrt{\rho_2^0}\rt)\rt\|_2\le C \eps^{1/4}.
 \ee
\end{theorem}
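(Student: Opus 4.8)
The plan is to deduce Theorem~\ref{thmGPtoTF} from the quantitative stability estimate \eqref{quantiestim} in Theorem~\ref{maintheointro} together with a standard energy‐comparison (upper bound) argument, using the natural identification $\rho_i=\eta_i^2$. First I would record the relation between the two functionals: for $\eta=(\eta_1,\eta_2)$ with $\rho_i=\eta_i^2$ one has
\be
F_\eps(\eta)\ge \frac1\eps\,E(\rho),
\ee
since the gradient term in \eqref{funcintrobis} is nonnegative and the potential/interaction part of $F_\eps$ is exactly $\tfrac1\eps E(\rho)$; moreover $\rho$ satisfies the volume constraints \eqref{ctrtmasse} whenever $\eta$ satisfies the mass constraints \eqref{constrainteta}. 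Hence $E(\rho^\eps)-E_0\le \eps F_\eps(\eta^\eps)$ for a minimizer $\eta^\eps$, where $\rho^\eps_i=(\eta^\eps_i)^2$.

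Next I would produce a matching upper bound on $\eps\min F_\eps$. Starting from $(\sqrt{\rho^0_1},\sqrt{\rho^0_2})$, I would build a competitor $\bar\eta^\eps$ by regularizing near the free boundaries of $\rho^0$ — the explicit radial formulas from Lemma~\ref{lemcpct1} make this concrete — on a transition layer of width $O(\eps)$, and rescale slightly to restore the exact mass constraints \eqref{constrainteta} (the rescaling costs only $O(\eps)$ in energy). On the bulk region the potential/interaction part contributes $\tfrac1\eps E_0+O(1)$, while the gradient term $\eps\int|\nb\bar\eta^\eps|^2$ is $O(\eps\cdot \tfrac1{\eps^2}\cdot\eps)=O(1)$; so $\min F_\eps\le \tfrac1\eps E_0 + C$, i.e. $\eps F_\eps(\eta^\eps)\le \eps\, \eps^{-1}E_0 + C\eps$ — more precisely $\eps\big(F_\eps(\eta^\eps)-\eps^{-1}E_0\big)\le C\eps$, giving $E(\rho^\eps)-E_0\le C\eps$. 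Applying \eqref{quantiestim} yields $\|\rho^\eps-\rho^0\|_1\le C\sqrt\eps$, that is $\|(\eta^\eps_1)^2-\rho^0_1\|_1+\|(\eta^\eps_2)^2-\rho^0_2\|_1\le C\sqrt\eps$.

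Finally I would upgrade this $L^1$ control of the squares to the claimed $L^2$ control of $\eta^\eps$ itself. The pointwise inequality $|a-b|^2\le |a^2-b^2|$ for $a,b\ge0$ gives
\be
\lt\|\eta^\eps-\lt(\sqrt{\rho_1^0},\sqrt{\rho_2^0}\rt)\rt\|_2^2\;\le\;\sum_{i=1}^2\int_{\R^2}\lt|(\eta^\eps_i)^2-\rho^0_i\rt|\;=\;\|\rho^\eps-\rho^0\|_1\;\le\;C\sqrt\eps,
\ee
and taking square roots produces the exponent $\eps^{1/4}$ in \eqref{quantconv1}.

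The main obstacle is the construction of the recovery sequence $\bar\eta^\eps$ with the sharp $O(1)$ bound on $\eps\min F_\eps - E_0$: one must interpolate across the interfaces where $\rho^0$ vanishes or where the two components meet, keeping the gradient energy of order $\eps^{-1}$ on layers of width $\eps$ (so that it contributes $O(1)$ after multiplication by $\eps$), while simultaneously not overshooting the constraint corrections. This is where the explicit description of $\rho^0$ and of $E_0$ from Lemma~\ref{lemcpct1}, in particular the fact that $\rho^0$ is Lipschitz away from finitely many circles and the geometry of its support is a disjoint union of an annulus and a disk, is essential; everything else is a soft consequence of \eqref{quantiestim}. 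A minor point to handle with care is the passage from $\eta^\eps\ge0$ (which may be assumed after replacing $\eta^\eps$ by $|\eta^\eps|$, since this does not increase $F_\eps$) so that the elementary inequality $|a-b|^2\le|a^2-b^2|$ applies.
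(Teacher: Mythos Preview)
Your proposal is correct and follows essentially the same route as the paper: drop the gradient term to get $E(\rho^\eps)\le \eps F_\eps(\eta^\eps)$, build a regularized competitor from $(\sqrt{\rho^0_1},\sqrt{\rho^0_2})$ to obtain $F_\eps(\eta^\eps)-E_0/\eps\le C$, apply the stability inequality~\eqref{quantiestim}, and convert the resulting $L^1$ bound on $\rho^\eps-\rho^0$ into an $L^2$ bound on $\eta^\eps-\sqrt{\rho^0}$ via $|a-b|^2\le|a^2-b^2|$. The paper compresses all of this into three lines and leaves the competitor construction and the pointwise square-root inequality implicit, but the argument is the same.
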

This theorem establishes that in the non-symmetric case,  the Thomas-Fermi limit already provides full information on the limiting behavior of the minimizers of~\eqref{funcintrobis}.
 It is quite surprising that even without using isoperimetric effects, we are able to obtain strong convergence of the minimizers in the form of~\eqref{quantconv1}.    Let us point out that the idea of using stability inequalities such as~\eqref{quantiestim} to get (quantitative) convergence results is far from new (see for instance \cite{CarFig,BelGolZwi}).
One crucial point which explains the difference between the asymmetric case and the symmetric one is that here, there is a gap between the two Thomas-Fermi profiles $\rho_1^0$ and $\rho_2^0$ in the sense that there exists $r_0>0$ such that $\supp \rho_0^1\subset \ov{B(0,r_0)}$ and $\supp\rho_0^2\subset \R^2\setminus B(0,r_0)$ with  
\[
 \inf_{r<r_0} \rho^0_1(r)\ >\  \sup_{r> r_0} \rho^0_2(r).
\]
\\

We then study the crossover case where $g=1+\eps \xi$ for some $\xi>0$. Let $\overline{\eta}_\eps$ be the minimizer of 
\[G_\eps(\eta)= \eps \int_{\R^2} |\nabla \eta|^2 +\frac{1}{\eps} \int_{\R^2} \frac{1}{2} \eta^4 + V \eta^2\]
under the mass constraint 
\[\int_{\R^2} \eta^2 =\overline{\alpha},\]
where $\overline{\alpha}=\alpha_1+\alpha_2$.
It is well known \cite{IgMil, KaSour} that $\overline{\eta}_\eps^2$ converges when $\eps$ goes to zero to the Thomas-Fermi profile
\[\rhob= (R^2-V)_+\]
where $R$ is chosen such that $\int_{\R^2} \rhob=\overline{\alpha}$. Building on results obtained in the case $g=1$ \cite{GolRoyo,AftRoyo}, we get
\begin{theorem}\label{gammaintro}
 For $K>1$, the functional $(F_\eps-G_\eps(\overline{\eta}_\eps))$ $\Gamma-$ converges  as $\eps$ goes to zero for the strong $L^1$ topology  to
 \[\Gxi(u_1,u_2)=\begin{cases}
                    \ds \sigma_{K}\int_{\partial E} \rhob^{3/2}+ \xi \int_{E^c} \rhob^2 \qquad & \ds \textrm{if } u_1= \rhob \chi_E, \ u_2= \rhob \chi_{E^c} \textrm{ and } \int_E \rhob=\alpha_1,\\
                   +\infty \qquad & \textrm{otherwise},
                  \end{cases}\]
where $\sigma_{K}>0$ is defined by the one dimensional optimal transition problem
\[\sigma_{K}=\inf \left\{ \int_{\R} | \eta'_1|^2 + | \eta'_2|^2+  \frac{1}{2}\left(\eta_1^2+\eta_2^2-1\right)^2 +(K-1) \eta_1^2\eta_2^2
\ :\ \lim_{-\infty} \eta_1=0, \ \lim_{+\infty} \eta_1=1\right\}.
\]
\end{theorem}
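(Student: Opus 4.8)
The plan is to establish the $\Gamma$-convergence in two parts, following the standard scheme but leveraging heavily the $g=1$ analysis of \cite{GolRoyo,AftRoyo}. The key observation is that $F_\eps = G_\eps^{(1)} + G_\eps^{(2)} + (K-1)\tfrac1\eps\int \eta_1^2\eta_2^2$, where $G_\eps^{(j)}(\eta_j)=\eps\int|\nabla\eta_j|^2+\tfrac1\eps\int\tfrac12\eta_j^4+V\eta_j^2$, plus the genuinely new term $\tfrac\xi\eps\int\eta_2^4$ coming from writing $g=1+\eps\xi$, so that $\tfrac g{2\eps}\int\eta_2^4 = \tfrac1{2\eps}\int\eta_2^4 + \tfrac\xi2\int\eta_2^4$. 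Since along any sequence of bounded energy $\eta_2^2\to\rhob\chi_{E^c}$ in $L^1$ (and in fact in $L^2$), the last term converges to $\tfrac\xi2\int_{E^c}\rhob^2$ — wait, it should be $\xi\int_{E^c}\rhob^2$, so in fact one expects $g=1+2\eps\xi$ or a relabeling; I will simply carry the constant as dictated by the algebra. The heart of the matter is therefore that the ``base'' functional $\sum_j G_\eps^{(j)} + \tfrac{K-1}\eps\int\eta_1^2\eta_2^2 - G_\eps(\oee)$ $\Gamma$-converges to $\sigma_K\int_{\partial E}\rhob^{3/2}$ on the constraint set, which is essentially what was proved for $g=1$; the new term is a continuous perturbation and passes to the limit trivially in both the $\liminf$ and the recovery sequence.

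For the $\liminf$ inequality, I would take a sequence $(\eta_1^\eps,\eta_2^\eps)$ with $F_\eps(\eta^\eps)-G_\eps(\oee)$ bounded and $\eta_j^\eps{}^2\to u_j$ in $L^1$. First, by comparison with $G_\eps$ applied to the single function $\sqrt{\eta_1^\eps{}^2+\eta_2^\eps{}^2}$ and convexity of $t\mapsto t^2$, one shows $\int(\eta_1^\eps{}^2+\eta_2^\eps{}^2)\to\int\rhob$ forces $u_1+u_2=\rhob$; segregation $u_1u_2=0$ a.e. comes from the $\tfrac{K-1}\eps\int\eta_1^2\eta_2^2$ term with $K>1$, exactly as in \cite{GolRoyo}. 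Hence $u_1=\rhob\chi_E$, $u_2=\rhob\chi_{E^c}$, and the mass constraint $\int_E\rhob=\alpha_1$ passes to the limit. The surface term $\sigma_K\int_{\partial E}\rhob^{3/2}$ is obtained by the slicing/blow-up argument of \cite{AftRoyo}: localize near $\partial E$, rescale at scale $\eps$, and recognize the one-dimensional transition cost $\sigma_K$ weighted by $\rhob^{3/2}$ (the power $3/2$ coming from the scaling of the Thomas-Fermi profile, which vanishes linearly, against the $\eps$-scaling of the transition layer). The extra term $\xi\int_{E^c}\rhob^2$ is added on by lower semicontinuity (Fatou) since $\eta_2^\eps{}^2\to\rhob\chi_{E^c}$ in $L^1$ and one can upgrade to $L^2$ using the uniform $L^\infty$ bound on $\eta_2^\eps$ (a priori bound from the energy, cf. the arguments behind Theorem \ref{thmGPtoTF}).

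For the $\limsup$ / recovery sequence, given $E$ with finite weighted perimeter and $\int_E\rhob=\alpha_1$, I would first reduce to $E$ smooth with $\partial E$ meeting $\{\rhob>0\}$ transversally, by density in the relevant topology. Then I would build $\eta^\eps$ by pasting the optimal one-dimensional transition profile of $\sigma_K$, suitably rescaled by $\eps$ and modulated by $\sqrt{\rhob}$, in a tubular neighborhood of $\partial E$, and setting $\eta_1^\eps\approx\sqrt{\rhob}\,\chi_E$, $\eta_2^\eps\approx\sqrt{\rhob}\,\chi_{E^c}$ away from it — this is precisely the construction in \cite{GolRoyo,AftRoyo} for $g=1$, which already yields the $\sigma_K$ surface energy and matches $G_\eps(\oee)$ to the required order. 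One must then correct for the mass constraints \eqref{constrainteta} by an $O(\eps)$ adjustment (a Lagrange-multiplier–type rescaling of $\rhob$), which does not affect the energy at leading order; and one checks that the added term $\tfrac\xi2\int\eta_2^\eps{}^4\to\xi\int_{E^c}\rhob^2$ (again adjusting the constant to the algebra) by dominated convergence.

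The main obstacle I expect is controlling the interaction of the three effects near $\partial E$ \emph{where $\rhob$ is small}, i.e. near the Thomas-Fermi boundary $\{V=R^2\}$: there the weight $\rhob^{3/2}$ degenerates, the transition layer thickens relative to $\rhob$, and one must ensure both that the $\liminf$ blow-up still sees the full $\sigma_K$ cost (no energy leaks into the region $\{\rhob\approx 0\}$) and that the recovery construction does not pay extra there. This is handled in \cite{AftRoyo} by a careful truncation argument combined with the non-degeneracy of $\rhob$ near $\partial\{\rhob>0\}$ ($\rhob$ vanishes linearly), and I would import that analysis essentially verbatim, the only genuinely new input being the continuity of the perturbation $\tfrac\xi\eps\int\eta_2^4$, which is immediate from the a priori $L^\infty$ bounds.
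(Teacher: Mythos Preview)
Your proposal is correct and follows essentially the same route as the paper: split $F_\eps - G_\eps(\oee)$ into the $g=1$ functional (whose $\Gamma$-limit is the weighted perimeter $\sigma_K\int_{\partial E}\rhob^{3/2}$, by \cite{GolRoyo,AftRoyo}) plus the continuous perturbation $\tfrac{\xi}{2}\int\eta_2^4$, and pass the latter to the limit separately. The paper makes this one step more explicit by first performing the Lassoued--Mironescu change of variables $\eta=\oee u$, so that the decomposition reads $F_\eps(\eta)=G_\eps(\oee)+\wtilde F_\eps(u)+\tfrac{\xi}{2}\int\oee^4 u_2^4$ and the cited $L^p$-$\Gamma$-convergence result for $\wtilde F_\eps$ makes the continuity of the perturbation immediate without any appeal to a priori $L^\infty$ bounds; but the content is the same, and the paper's actual proof is then a single sentence.
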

We study the minimizers of the limiting functional $\Gxi$ and prove our second main result.
\begin{theorem}\label{theoxiintro}
 The following holds:
 \begin{itemize}
  \item there exists $\alpha_0\in (0,\ov\alpha/2]$ such that for every $\alpha_1\in (\alpha_0, \overline{\alpha}-\alpha_0)$ there exists $\xi^1_{\alpha_1}$
  such that the minimizer of $\Gxi$ is not radially symmetric for $\xi\le \xi^1_{\alpha_1}$,
  \item for every $\alpha_1\in (0, \overline{\alpha})$, there exists $\xi^2_{\alpha_1}$ such that the minimizer of $\Gxi$ is the centered ball for $\xi\ge \xi_{\alpha_1}^2$.
 \end{itemize}

\end{theorem}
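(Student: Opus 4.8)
The plan is to analyze the limiting functional $\Gxi(u_1,u_2) = \sigma_K\int_{\partial E}\rhob^{3/2} + \xi\int_{E^c}\rhob^2$ over sets $E$ with $\int_E\rhob = \alpha_1$, treating the two regimes separately. For the second bullet (large $\xi$ forces the centered ball), observe that the bulk term $\xi\int_{E^c}\rhob^2$ is, up to an additive constant independent of $E$, equal to $-\xi\int_E\rhob^2$ (since $\int_{\R^2}\rhob^2$ is fixed). Because $\rhob = (R^2-V)_+ = (R^2-|x|^2)_+$ is radially decreasing, among all sets of prescribed $\rhob$-mass $\alpha_1$ the quantity $\int_E\rhob^2$ is \emph{uniquely maximized} by the centered sublevel set $\{\rhob > t\} = B(0,r_{\alpha_1})$; this is a weighted bathtub/rearrangement argument. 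So for $\xi$ large the bulk term strongly prefers the centered ball, while the perimeter term $\sigma_K\int_{\partial E}\rhob^{3/2}$ is uniformly bounded on minimizers (compare with the ball). A quantitative stability estimate for the bathtub inequality — of the form $\int_{B_{r_{\alpha_1}}}\rhob^2 - \int_E\rhob^2 \gtrsim \|\chi_E - \chi_{B_{r_{\alpha_1}}}\|_{L^1(\rhob\,dx)}^2$ or similar — combined with the existence of minimizers and a density/regularity argument, will show that once $\xi \ge \xi^2_{\alpha_1}$ the unique minimizer is $E = B(0,r_{\alpha_1})$. The existence and basic regularity of minimizers of $\Gxi$ should be established first (lower semicontinuity of the weighted perimeter, compactness from the mass constraint, and a volume-fixing variation to get a $\Lambda$-minimality property).

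For the first bullet (small $\xi$, symmetry breaking on a middle range of masses), the idea is that at $\xi = 0$ the functional reduces to the pure weighted isoperimetric problem $\min \sigma_K\int_{\partial E}\rhob^{3/2}$ under $\int_E\rhob=\alpha_1$, and one shows the centered ball (or centered annulus-type configuration) is \emph{not} a minimizer, or at least not a local minimizer, for $\alpha_1$ in a suitable middle range. The weight $\rhob^{3/2} = (R^2-|x|^2)_+^{3/2}$ vanishes at $|x|=R$, so long portions of $\partial E$ near the boundary of the support are essentially free; this is what drives the instability. Concretely I would compute the second variation of $E \mapsto \int_{\partial E}\rhob^{3/2}$ at the centered ball $B(0,r_{\alpha_1})$ — using the Jacobi-type operator associated to the weight restricted to the circle $|x| = r_{\alpha_1}$ — and exhibit a non-radial normal perturbation (a mode $\cos(k\theta)$ for suitable $k$) along which the second variation is negative when $r_{\alpha_1}$ is not too close to $0$ or $R$, i.e.\ when $\alpha_1 \in (\alpha_0, \ov\alpha - \alpha_0)$. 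By continuity of the (finite-dimensional, after projecting onto a fixed perturbation) energy in $\xi$, this negative second variation persists for $\xi \le \xi^1_{\alpha_1}$ with $\xi^1_{\alpha_1} > 0$, so the centered ball is unstable and hence not the minimizer. One must also check that the competitor configurations (e.g.\ a ball pushed off-center, or a more clever shape) are admissible, which is immediate since any measurable $E$ with the right $\rhob$-mass is allowed.

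The main obstacle is the symmetry-breaking half: unlike the classical isoperimetric inequality, here the weight $\rhob^{3/2}$ degenerates at $\partial\{\rhob>0\}$, and it is not a priori obvious what the minimizer looks like or even that a clean second-variation computation at the centered ball is decisive — one may need to rule out that some other radial configuration (e.g.\ an off-center ball still contained in the support, or a configuration touching $|x|=R$) is the true minimizer before concluding non-radial symmetry. I expect the cleanest route is: (i) show the centered ball is the unique radial candidate with the right topology, (ii) show its second variation has a negative non-radial mode for $\alpha_1$ in the middle range, hence it is not even a local minimizer, (iii) invoke existence of a (global) minimizer of $\Gxi$ to conclude the minimizer is non-radial — being careful that "non radially symmetric" is exactly the negation of "equal to a centered ball or centered annulus", which the perimeter term's structure essentially forces the competitor to be if it were radial. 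The constant $\alpha_0$ emerges as the threshold below which the negative mode disappears (the perturbation analysis degenerates as $r_{\alpha_1}\to 0$), and by the $g\ge 1$, $K>1$ normalization one gets $\alpha_0 \le \ov\alpha/2$ by symmetry considerations in $\alpha_1 \leftrightarrow \ov\alpha - \alpha_1$ only partially — in fact the roles of $E$ and $E^c$ are \emph{not} symmetric because of the $\xi\int_{E^c}\rhob^2$ term, so at $\xi=0$ one does have the symmetry and $\alpha_0\le\ov\alpha/2$ follows.
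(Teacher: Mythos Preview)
Your plan for the second bullet has the right opening move (quantitative stability for the volume term, i.e.\ the paper's Proposition~\ref{propstab}), but it is missing the decisive second ingredient. From minimality of $E_\xi$ and your bathtub stability you only get
\[
\xi\, c\lt(\int_{E_\xi\Delta B}\rhob\rt)^2\ \le\ \int_{E_\xi^c}\rhob^2-\int_{B^c}\rhob^2\ \le\ \dfrac1\xi\big(\F(B)-\F(E_\xi)\big),
\]
and knowing merely that $\F(E_\xi)$ is ``uniformly bounded'' (or even $\F(E_\xi)\ge 0$) yields only $\int_{E_\xi\Delta B}\rhob\lesssim \xi^{-1/2}$: convergence to the ball, not equality for any finite $\xi$. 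A ``density/regularity argument'' does not close this gap, because the ball can genuinely be \emph{unstable} for the weighted perimeter $\F$ (the paper computes this in the remark following Proposition~\ref{propspherical}), so nearby nearly spherical sets may have strictly smaller $\F$. What is needed, and what the paper proves as Proposition~\ref{propestiminstab}, is the two-sided quadratic control
\[
\F(E)-\F(B)\ \ge\ -C\lt(\int_{E\Delta B}\rhob\rt)^2
\]
for all competitors close to $B$. Combining this with the volume stability gives $(c\xi-C)(\int_{E_\xi\Delta B}\rhob)^2\le 0$, forcing $E_\xi=B$ once $\xi>C/c$. The proof of this ``bounded instability'' inequality is the hard part: it requires a Fuglede-type computation for nearly spherical sets, an isoperimetric inequality for the degenerate weight $\rhob^{3/2}$, and a selection-principle argument to reduce general competitors to nearly spherical ones. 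None of this is visible in your outline.

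For the first bullet, your second-variation route at the centered ball is different from the paper's and more laborious. The paper simply invokes the result of \cite{AftRoyo} that at $\xi=0$ the global minimizer of $\F$ under the mass constraint is already non-radially symmetric for $\alpha_1$ in the stated range; hence there is a non-radial $E_{\alpha_1}$ with $\F(E_{\alpha_1})$ strictly below $\inf\{\F(F):F\text{ radial}\}$, and adding the bounded term $\xi\int_{E_{\alpha_1}^c}\rhob^2$ preserves this strict inequality for $\xi$ small. Your approach of showing a negative non-radial mode at $B$ would only show the ball is not a \emph{local} minimizer; to conclude that \emph{no} radial set minimizes you would still need to argue the ball is the best radial candidate, which you flag but do not resolve. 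Citing the known global symmetry-breaking result is both shorter and avoids that issue.
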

The regime $g=1+\eps\xi$ corresponds to the numerical simulation of \cite{MaAf}. In that paper, the observed numerical results (droplets) fit with the first point of Theorem \ref{theoxiintro} (see in particular \cite[Fig 1.c, Fig. 3.a]{MaAf}).
The first part of Theorem \ref{theoxiintro} is a consequence of a symmetry breaking result from \cite{AftRoyo}. The second part follows from a combination of two results.
The first is a stability result for the  functional $\int_{E^c} \rhob^2$:
\begin{proposition}
 For every $\alpha\in (0,\overline{\alpha})$, there exists $C=C(\alpha)>0$ such that for every measurable set  $E\subset\R^2$ with $\int_E \rhob= \alpha$, we have,
\[
\int_{E^c} \rhob^2-\int_{B_{r}^c} \rhob^2\ \geq\ C \left(\int_{{E\Delta B_{r}}} \rhob \right)^2
\]
where $r$ is such that $\int_{B_r} \rhob=\alpha$.
\end{proposition}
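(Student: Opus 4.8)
The plan is to recast the inequality as a quantitative bathtub principle. Since $\int_{E^c}\rhob^2=\int_{\R^2}\rhob^2-\int_E\rhob^2$, and likewise with $E$ replaced by $B_r$, the asserted estimate is equivalent to
\[
\int_{B_r}\rhob^2-\int_E\rhob^2\ \geq\ C\Bigl(\int_{E\Delta B_r}\rhob\Bigr)^{2},
\]
i.e. to the statement that among sets of prescribed mass $\int_E\rhob=\alpha$ the centered ball $B_r$ maximizes $\int_E\rhob^2$, with a quadratic gap. To exploit this I introduce the finite measure $d\mu=\rhob\,dx$, so that $\int_E\rhob^2=\int_E\rhob\,d\mu$, and I use that $\rhob=(R^2-|x|^2)_+$ is radially symmetric and strictly decreasing on $B_R$, hence its superlevel sets are the centered balls $\{\rhob>t\}=B_{\sqrt{R^2-t}}$ for $t\in[0,R^2]$. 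Writing $\rhob(x)=\int_0^{R^2}\chi_{\{\rhob>t\}}(x)\,dt$ and applying Tonelli gives the layer-cake identities
\[
\int_E\rhob^2=\int_0^{R^2}\mu\bigl(E\cap\{\rhob>t\}\bigr)\,dt,\qquad \int_{B_r}\rhob^2=\int_0^{R^2}\min\bigl(\alpha,M(t)\bigr)\,dt,
\]
where $M(t):=\mu(\{\rhob>t\})$ is continuous, strictly decreasing on $[0,R^2]$, and $M(t_0)=\alpha$ for the level $t_0:=R^2-r^2$ with $B_r=\{\rhob>t_0\}$. Since $\mu(E\cap\{\rhob>t\})\le\min(\alpha,M(t))$ for every $t$, the difference above equals $\int_0^{R^2}D(t)\,dt$ with $D(t):=\min(\alpha,M(t))-\mu(E\cap\{\rhob>t\})\ge0$.

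The second step is to localise near $t=t_0$. Set $m:=\mu(E\setminus B_r)$; since $\mu(E)=\mu(B_r)=\alpha$ one also has $\mu(B_r\setminus E)=m$, hence $\int_{E\Delta B_r}\rhob=2m$ and $D(t_0)=\mu(B_r\setminus E)=m$. For $t\ge t_0$ one has $\{\rhob>t\}\subset B_r$ and $\min(\alpha,M(t))=M(t)$, so $D(t)=\mu(\{\rhob>t\}\setminus E)$, and the decomposition $\{\rhob>t\}\setminus E=(B_r\setminus E)\setminus\{t_0<\rhob\le t\}$ yields the key bound
\[
D(t)\ \ge\ m-\mu\bigl(\{t_0<\rhob\le t\}\bigr)\ =\ m-\bigl(\alpha-M(t)\bigr),\qquad t\ge t_0 .
\]
Thus $D(t)\ge m/2$ on the interval $[t_0,t_1]$ with $t_1:=M^{-1}(\alpha-m/2)$, and this interval is nonempty and lies in $[0,R^2]$ because $m\le\mu(E)=\alpha$ forces $\alpha-m/2\in[\alpha/2,\alpha]\subset(0,\ov\alpha)$. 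A direct computation in polar coordinates gives $M(t)=\tfrac{\pi}{2}(R^4-t^2)$, hence $|M'|\le\pi R^2$ on $[0,R^2]$, so the relation $M(t_0)-M(t_1)=m/2$ forces $t_1-t_0\ge m/(2\pi R^2)$. Integrating $D\ge m/2$ over $[t_0,t_1]$ then gives $\int_0^{R^2}D\ge m^2/(4\pi R^2)$, and since $\int_{E\Delta B_r}\rhob=2m$ we obtain the claim with $C=1/(16\pi R^2)$, which in fact depends only on $\ov\alpha$ (through $R$).

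I do not expect a genuine obstacle here: once the statement is phrased as the quantitative bathtub inequality via the layer-cake identity, the proof is elementary, and the only points requiring attention are bookkeeping ones — using the mass constraint to get $\mu(E\setminus B_r)=\mu(B_r\setminus E)$, checking that $\alpha-m/2$ stays in the range of $M$, and the Lipschitz bound on the one-variable function $M$ (for which the explicit formula is convenient but not essential: all that is used is that $M$ is locally Lipschitz on $[0,R^2]$ with constant controlled by $R$). The same computation also makes transparent that $B_r$ is the unique maximizer and that the loss is exactly $\int_0^{R^2}D$, which explains why the quadratic dependence on the right-hand side is the correct one.
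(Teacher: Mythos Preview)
Your proof is correct. Both arguments rest on the bathtub principle (the centered ball maximizes $\int_E\rhob^2$ among sets with $\int_E\rhob=\alpha$), but the quantitative step is carried out differently. The paper works in physical space: using the mass constraint it rewrites the deficit as $\int_{B_r\setminus E}\rhob\,(\rhob-\rhob(r))+\int_{E\setminus B_r}\rhob\,(\rhob(r)-\rhob)$, then compares each piece to the extremal annular competitor (e.g.\ $B_r\setminus E$ is replaced by the annulus $B_r\setminus B_{r-\delta}$ of equal $\rhob$-mass) and estimates the resulting one-variable integral to get $\gtrsim \overline V^2$. You instead pass to the level-set variable via the layer-cake identity, express the deficit as $\int_0^{R^2}D(t)\,dt$ with $D\ge 0$, and show that $D$ cannot decay below $m/2$ on an interval of length $\gtrsim m$ thanks to the Lipschitz bound on $M(t)=\mu(\{\rhob>t\})$.

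Your route is a bit more systematic---it only uses that $\rhob$ is radially strictly decreasing with $M$ locally Lipschitz, so it would transfer verbatim to other weights---and it delivers an explicit constant $C=1/(16\pi R^2)$ depending only on $\overline\alpha$, a small improvement on the $C=C(\alpha)$ stated in the paper. The paper's approach, on the other hand, makes the geometric mechanism (annular rearrangement and the linear vanishing of $\rhob-\rhob(r)$ at $\partial B_r$) more visible; it is essentially the ``one-layer'' version of your level-set argument.
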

The second is an estimate on the potential instability of the ball for the weighted isoperimetric problem.

\begin{proposition}
  For every $\alpha\in (0,\overline{\alpha})$, there exists $c=c(\alpha)>0$ such that for every set  $E\subset \R^2$  with locally finite perimeter and with $\int_E \rhob= \alpha$, there holds
\be\label{stabisoperintro} \int_{\partial E} \rhob^{3/2}-\int_{\partial B_r} \rhob^{3/2}\ \geq\ -c \left(\int_{E\Delta B_r} \rhob \right)^2,\ee
where $r$ is such that $\int_{B_r} \rhob=\alpha$.
\end{proposition}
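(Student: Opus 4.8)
\emph{Proof proposal.} Write $d(E):=\int_{E\Delta B_r}\rhob$ for the weighted asymmetry. First I would reduce to bounded sets: since $\rhob$ and $\rhob^{3/2}$ are continuous and vanish outside $B_R$, replacing $E$ by $E\cap B_R$ does not increase $\int_{\partial E}\rhob^{3/2}$ and changes neither $\int_E\rhob$ nor $d(E)$, so I may assume $E\subseteq\ov{B_R}$ (here $r<R$ because $\alpha<\ov\alpha$). The statement is then equivalent to the following: for some $c=c(\alpha)$, the centered ball $B_r$ minimizes the penalized functional
\[
\mathcal F_c(E):=\int_{\partial E}\rhob^{3/2}+c\,d(E)^2
\]
among all $E\subseteq\ov{B_R}$ with $\int_E\rhob=\alpha$. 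I would prove this.

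\textbf{Nearly spherical sets.} The heart is the case where $\partial E=\{\rho=r(1+\psi(\theta))\}$ is a normal graph over $\partial B_r$ with $\|\psi\|_{C^1}$ small. I would Taylor expand $\int_{\partial E}\rhob^{3/2}$ and the constraint $\int_E\rhob=\alpha$ to second order in $\psi$: the constraint forces the mean of $\psi$ to be quadratically small, so the linear term of the perimeter expansion drops to second order ($B_r$ is a critical point), and one is left with
\[
\int_{\partial E}\rhob^{3/2}-\int_{\partial B_r}\rhob^{3/2}=Q(\psi)+O\big(\|\psi\|_{C^1}\|\psi\|_{H^1}^2\big),\qquad Q(\psi)=a_0\|\psi\|_{L^2}^2+b_0\|\psi'\|_{L^2}^2,
\]
with $b_0>0$ explicit and $a_0=a_0(r,R)$ possibly negative — this is exactly where the instability of the ball appears. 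Since $Q$ is rotation invariant it is diagonalized by the Fourier modes, with eigenvalues $a_0+b_0k^2$, and the crucial remark is that $a_0+b_0k^2\to+\infty$, so only the finitely many modes $|k|\le K_0:=\lceil\sqrt{2|a_0|/b_0}\,\rceil$ can fail to be positive. For those, $|\widehat\psi_k|\le\tfrac1{2\pi}\|\psi\|_{L^1}\lesssim d(E)$ (because $d(E)\asymp\|\psi\|_{L^1}$ when $\|\psi\|_{C^1}$ is small), while $a_0+b_0k^2\ge\tfrac{b_0}2k^2$ for $|k|>K_0$; hence, writing $P_{>K_0}\psi$ for the projection of $\psi$ onto modes $|k|>K_0$,
\[
Q(\psi)\ \ge\ -C_4\,d(E)^2+\tfrac{b_0}{2}\|P_{>K_0}\psi\|_{H^1}^2 .
\]
As the error term is bounded by $C\|\psi\|_{C^1}\big(d(E)^2+\|P_{>K_0}\psi\|_{H^1}^2\big)$, taking $\|\psi\|_{C^1}$ small absorbs it, giving (after possibly enlarging $C_4$)
\be\label{nsineq}
\int_{\partial E}\rhob^{3/2}-\int_{\partial B_r}\rhob^{3/2}\ \ge\ -C_4\,d(E)^2 .
\ee

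\textbf{Reduction to the nearly spherical case.} Next I would take a minimizer $E_c$ of $\mathcal F_c$ (it exists by the direct method: the weighted perimeter is lower semicontinuous, and the constraint and $d(\cdot)$ are continuous along $L^1_{loc}(B_R)$-convergence, $\rhob$ being bounded and compactly supported). If $\mathcal F_c(E_c)\ge\mathcal F_c(B_r)=\int_{\partial B_r}\rhob^{3/2}$ we are done; otherwise $\int_{\partial E_c}\rhob^{3/2}<\int_{\partial B_r}\rhob^{3/2}$ and $c\,d(E_c)^2\le\int_{\partial B_r}\rhob^{3/2}$, so $d(E_c)\to0$ and hence $E_c\to B_r$ in $L^1(\rhob\,dx)$, thus in $L^1(\ov{B_{r'}})$ for each $r<r'<R$, as $c\to\infty$. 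On a fixed neighborhood of $\partial B_r$ the weight $\rhob^{3/2}$ is smooth and bounded below, and $E_c$ is there an almost-minimizer of weighted perimeter (the penalization $c\,d(E)^2$ and the mass constraint being lower-order perturbations), so by the regularity theory for almost-minimizers and the density estimates $\partial E_c$ is, for $c$ large, a normal $C^{1,\gamma}$-graph over $\partial B_r$ with $\|\psi_c\|_{C^1}$ as small as we wish. Then \eqref{nsineq} yields
\[
\mathcal F_c(E_c)-\mathcal F_c(B_r)=\Big(\int_{\partial E_c}\rhob^{3/2}-\int_{\partial B_r}\rhob^{3/2}\Big)+c\,d(E_c)^2\ \ge\ (c-C_4)\,d(E_c)^2\ \ge\ 0
\]
once $c\ge C_4$, contradicting $\mathcal F_c(E_c)<\mathcal F_c(B_r)$. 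So for $c$ large $B_r$ minimizes $\mathcal F_c$, which is the claim, with $c$ depending only on $r,R$, i.e. only on $\alpha$.

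\textbf{Main obstacle.} The nearly spherical expansion is routine and the ``far'' regime $d(E)\gtrsim1$ is automatically contained in the penalized formulation, so the real difficulty lies in the regularity step above, for two reasons. First, $\rhob^{3/2}$ degenerates at $\partial B_R$, so weighted perimeter gives no control in $\{\rhob^{3/2}\ll1\}$: one must prove a confinement lemma — e.g.\ using a dilation competitor — showing that the penalized minimizer cannot keep a non-negligible amount of mass there. Second, the effective forcing $\sim c\,d(E_c)$ in the almost-minimality of $E_c$ is unbounded as $c\to\infty$, so one cannot invoke ``improved convergence'' off the shelf; one must either run $\varepsilon$-regularity at the appropriate small scale (checking that $d(E_c)$ decays fast enough relative to that scale) or replace $c\,d(E)^2$ by a penalization with uniformly bounded Lipschitz constant that still forces $d(E_c)\to0$. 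This is where I would expect to spend most of the effort.
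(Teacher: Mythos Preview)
Your overall strategy---Fuglede-type expansion for nearly spherical sets, then a Selection-Principle reduction via regularity theory---is exactly the paper's, and you have correctly isolated the two real obstacles. Here is how the paper resolves them.

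\textbf{Bounded forcing.} The paper does not minimize $\mathcal F(E)+c\,d(E)^2$. It argues by contradiction: given $E_n$ violating the inequality with $\eps_n:=d(E_n)\to0$, it minimizes
\[
\mathcal F(F)+2\Lambda_1\bigl((\eps_n-d(F))^2+\eps_n\bigr)^{1/2}+\Lambda_2\,|\mathcal V(F)-\alpha|.
\]
The penalization is $1$-Lipschitz in $d(F)$, so the minimizer $F_n$ is a $\Lambda$-minimizer of $\mathcal F$ with $\Lambda=2\Lambda_1+\Lambda_2$ \emph{independent of $n$}---precisely your second proposed fix. The constant $\Lambda_1$ comes from a preliminary linear bound $\mathcal F(E)-\mathcal F(B_r)\ge-\Lambda_1\, d(E)$, obtained by calibration with a vector field; this is what forces the limit of $F_n$ to be $B_r$. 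One then shows $d(F_n)/\eps_n\to1$ and applies the nearly-spherical estimate to $F_n$ to get the contradiction.

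\textbf{Confinement.} For the degeneracy near $\partial B_R$, the paper proves a weighted isoperimetric inequality $\int_{\partial E}\rhob^{3/2}\ge c\bigl(\int_E\rhob\bigr)^{5/6}$ for sets of small weighted volume (by spherical symmetrization and reduction to a flat strip). Together with standard $\eps$-regularity in a fixed neighborhood of $\partial B_r$, this rules out stray components of a $\Lambda$-minimizer in $\{\rhob\ll1\}$, so $F_n$ is globally nearly spherical once it is $L^1(\rhob)$-close to $B_r$.

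\textbf{Nearly spherical step.} The paper reaches the same conclusion as your Fourier splitting by a slightly more elementary interpolation: after the expansion and the constraint one has $\mathcal F(E)-\mathcal F(B_r)\ge\tfrac12\|\nabla u\|_2^2-C\|u\|_2^2$, and then $\|u\|_2^2\le\delta\|\nabla u\|_2^2+\Lambda_\delta\|u\|_1^2$ (from $\|u-\bar u\|_\infty\lesssim\|\nabla u\|_2$ on the circle) absorbs the negative term at the cost of $-C\|u\|_1^2\sim -C\,d(E)^2$.
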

 The rigidity result given by Theorem \ref{theoxiintro} is similar in spirit to several rigidity results obtained for variants of isoperimetric problems (see \cite{KnuMu, FigMag, GolNovRuf} for instance).
 However, the  peculiar aspect here is that this rigidity does not come from the isoperimetric term but rather from the volume term. Nevertheless, the proof of~\eqref{stabisoperintro}
 bounding the {\it instability} of the ball follows the strategy of \cite{CicLeo} (see also \cite{AcFuMo,FigMag}) to prove quantitative {\it stability} estimates for isoperimetric problems. The idea is to show first the desired inequality for nearly spherical sets
 following ideas of \cite{fuglede} and then use the regularity theory for minimal surfaces to reduce oneself to this situation. 
 To the best of our knowledge, it is the first time that this strategy has been implemented to control the {\it instability} of the ball. Let us also point out that one of the ingredients in our proof is the following isoperimetric inequality:
 \begin{lemma}
  There exists a constant $c>0$ such that for every measurable set $E\subset\R^2$ satisfying $\int_{E}\rhob\leq \alpha/2$, there holds
  \[\int_{\partial E} \rhob^{3/2} \ge c \left(\int_E \rhob\right)^{5/6}.\]
 \end{lemma}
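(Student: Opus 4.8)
The plan is to establish a weighted isoperimetric-type inequality relating the boundary integral $\int_{\partial E}\rhob^{3/2}$ to the mass $\int_E\rhob$, using the fact that $\rhob=(R^2-V)_+=(R^2-|x|^2)_+$ is radially decreasing and compactly supported in $B_R$. The key idea is to foliate $B_R$ by the level sets of $\rhob$, i.e. by concentric circles, and to apply the coarea formula together with the classical isoperimetric inequality on each slice. Concretely, for $t\in(0,R^2)$ set $\rhob=t$ on $\partial B_{\sqrt{R^2-t}}$, so the superlevel set $\{\rhob>t\}=B_{r(t)}$ with $r(t)=\sqrt{R^2-t}$. Writing $m(t)=\int_{\{\rhob>t\}\cap E}\rhob$ for the mass of $E$ above level $t$ and $P(t)=\HH^1(\partial^*E\cap\{\rhob>t\})$ weighted appropriately, one reduces the desired bound to a one-dimensional differential inequality in $t$.

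First I would record the two structural facts I will use: (i) since $\int_E\rhob\le\alpha/2<\int_{B_R}\rhob$, the set $E$ cannot fill $B_R$, and in fact a positive fraction of $B_R$ (near the origin, where $\rhob$ is largest) must lie outside $E$ or the mass constraint combined with a Chebyshev-type estimate forces $E$ to be ``small'' in a quantitative sense; (ii) the relative isoperimetric inequality in the disk, or simply the Euclidean isoperimetric inequality applied to $E\cap B_{r(t)}$, gives $\HH^1(\partial^*E\cap B_{r(t)})+\HH^1(\partial B_{r(t)}\cap E)\gtrsim |E\cap B_{r(t)}|^{1/2}$. The weight $\rhob^{3/2}$ on $\partial E$ is comparable to $t^{3/2}$ on the circle $\partial B_{r(t)}$, so slicing by the coarea formula,
\[
\int_{\partial E}\rhob^{3/2}\;=\;\tfrac32\int_0^{R^2} t^{1/2}\,\HH^1(\partial^*E\cap\{\rhob>t\})\,dt
\;\gtrsim\;\int_0^{R^2}t^{1/2}\Big(|E\cap B_{r(t)}|^{1/2}-c\,\HH^1(\partial B_{r(t)}\cap E)\Big)\,dt,
\]
and the boundary-circle term can be absorbed or estimated using $|\partial B_{r(t)}\cap E|\le 2\pi r(t)$ together with the mass deficit. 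The main point is then a Hölder/optimization argument: express $\int_E\rhob$ as $\int_0^{R^2}|E\cap B_{r(t)}|\,dt$ and show that, subject to this being equal to $\int_E\rhob\le\alpha/2$ and to $|E\cap B_{r(t)}|\le\pi r(t)^2$, the quantity $\int_0^{R^2}t^{1/2}|E\cap B_{r(t)}|^{1/2}\,dt$ is minimized (up to constants) by the ``ball-shaped'' competitor $E=B_\rho$, which yields exactly the exponent $5/6$: indeed for $E=B_\rho$ one has $\int_E\rhob\sim\rho^2(R^2-\rho^2)\sim\rho^2$ for small $\rho$, while $\int_{\partial E}\rhob^{3/2}\sim\rho(R^2-\rho^2)^{3/2}\sim\rho$, and $\rho\sim(\rho^2)^{5/6}$ fails — so the scaling heuristic must instead be checked in the regime where $E$ is \emph{not} a small centered ball but a thin annular or off-center region; this is where the constraint $\int_E\rhob\le\alpha/2$ (pushing mass away from the origin into lower values of $\rhob$) produces the genuine exponent $5/6$ rather than $1$.

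The step I expect to be the main obstacle is exactly this optimization over the profile $t\mapsto|E\cap B_{r(t)}|$: one must show that the worst case is a layer of $E$ sitting at a single level $t_\ast$ (a thin shell $\{t_\ast<\rhob<t_\ast+\delta\}$ intersected with a disk of the right perimeter), and then the competition between ``putting mass where $\rhob$ is large'' (cheap in mass per unit area, but forces small $|E\cap B_{r(t)}|$ hence relatively large perimeter for that mass) and ``putting mass near $\partial B_R$ where $\rhob$ is small'' (requires large area, hence large perimeter) pins down the exponent. Technically I would handle this by a rearrangement step — replacing $E$ by the radially symmetric, origin-centered set with the same $|E\cap B_{r(t)}|$ for all $t$, which does not increase $\int_{\partial E}\rhob^{3/2}$ by the coarea/Gronwall argument above and preserves $\int_E\rhob$ — reducing to a purely one-dimensional variational problem for a nonincreasing function, which I can then solve by a Lagrange-multiplier or direct Hölder computation. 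A final subtlety is the degeneracy of $\rhob$ near $\partial B_R$ (where $t\to0$): there the weight $t^{1/2}$ on the boundary term vanishes, so one must be slightly careful that mass placed in that region is not ``free,'' but the factor $t^{1/2}$ against area $\sim r(t)^2\sim R^2$ still costs enough, and the constraint $\int_E\rhob\le\alpha/2$ prevents arbitrarily much mass from being hidden there.
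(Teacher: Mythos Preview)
Your layer-cake identity $\int_{\partial E}\rhob^{3/2}=\tfrac32\int_0^{R^2}t^{1/2}\,\HH^1(\partial^*E\cap\{\rhob>t\})\,dt$ is correct, but the scheme built on it has a genuine obstruction. For each level $t$ you want to bound $\HH^1(\partial^*E\cap B_{r(t)})$ from below in terms of $|E\cap B_{r(t)}|$ via a relative isoperimetric inequality, then optimize over the profile $t\mapsto |E\cap B_{r(t)}|$. But $\HH^1(\partial^*E\cap B_{r(t)})$ can vanish while $|E\cap B_{r(t)}|$ is large: whenever $B_{r(t)}\subset E$ the slice contributes nothing to the perimeter integral. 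This is not a pathological corner case --- any set $E$ that contains a neighborhood of the origin will have this behavior for all small enough $r(t)$, and the mass constraint $\int_E\rhob\le\alpha/2$ does not exclude it. Your suggestion to ``absorb'' the circle term $\HH^1(\partial B_{r(t)}\cap E)$ does not help, since that term can equal $2\pi r(t)$ and completely dominate. So the decoupling of levels that your approach relies on simply breaks down; the perimeter of $E$ inside a superlevel set is not a pointwise function of the area of the slice. Relatedly, your scaling check is off: for a small centered ball $E=B_\rho$ one has $\int_E\rhob\sim\rho^2$ and $\int_{\partial E}\rhob^{3/2}\sim\rho$, and $\rho\gtrsim\rho^{5/3}$ holds trivially --- this is \emph{not} the critical case. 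The extremal configuration (as the paper notes) is a small ball touching $\partial B_R$, where $\int_E\rhob\sim\rho^3$ and $\int_{\partial E}\rhob^{3/2}\sim\rho^{5/2}$, and this is precisely the regime your slicing argument cannot see.

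The paper's proof avoids all of this by first localizing: one reduces to connected sets, and then shows by an elementary argument (either $E$ stays in a fixed smaller ball where the classical inequality applies, or $\partial E$ must cross a fixed annulus and hence $\mathcal{F}(E)$ is bounded below by a positive constant) that one may assume $E\subset B_R\setminus B_{R/2}$. After spherical symmetrization, a diffeomorphism sends the problem to a periodic strip $S^1\times(0,1/2)$ with weights $y^{3/2}$ on perimeter and $y$ on area. A further convexification in the $y$-direction reduces to a set lying between two monotone graphs $y_1\le y_2$, for which $\mathcal{F}_1\sim\int y_1^{3/2}+\int y_2^{3/2}+y_2(0)^{5/2}-y_1(0)^{5/2}$ and $\mathcal{V}_1\sim\int(y_2^2-y_1^2)$ can be compared directly by elementary inequalities. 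None of these reductions is suggested by your outline, and the localization to the annulus --- which is what makes the degeneracy of $\rhob$ near $\partial B_R$ the \emph{only} relevant regime --- is exactly the step that your level-set approach lacks.
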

In recent years, there has been an increasing interest in studying isoperimetric problems with densities (see \cite{MorPra, FigMag,DePFraPra} for instance). However, most of these authors consider either problems where the same density is used for weighting the volume and the perimeter or weights which are increasing at infinity.\\

In the last part of the paper we come back to the situation $g>1$ but consider an infinitely stiff trapping potential. That is, we assume that $V$ is equal to zero inside some given open set $\Omega$ and is infinite outside. This is somehow the setting which is considered in \cite{BVS,SchayIndcrit}. 
In this case, it is easier to work with slightly different parameters. After a new rescaling and some simple algebraic manipulations (see Section \ref{notrap}), the problem can be seen to be equivalent to minimizing
\be\label{funcintroter}\jj_\eps(\eta)= \eps\int_{\Omega} |\nabla \eta_1|^2 +\lambda^2 |\nabla \eta_2|^2 + \frac{1}{\eps}\int_{\Omega} \frac{1}{2}( \eta_1^2+ \eta_2^2 -1)^2 + (K-1) \eta_1^2\eta_2^2\ee
with the volume constraint
\[\int_{\Omega} \eta_1^2 =\alpha_1 \qquad \textrm{and} \qquad \int_{\Omega} \eta_2^2 =\alpha_2,\]
for some $\lambda\le 1$, $K>1$ and $\alpha_1,\alpha_2\geq 0$ such that $\alpha_1+\alpha_2=|\Omega|$. The main difficulty in studying the $\Gamma-$convergence of~\eqref{funcintroter} to a sharp limit model is to obtain strong compactness for sequences of bounded energy. 
In the symmetric case $\lambda=1$, which corresponds to $g=1$ in~\eqref{funcintrobis}, one can follow the strategy of \cite{AftRoyo,GolRoyo} and use a nonlinear sigma model representation \cite{KaTsuUe}  to rewrite the problem in terms of an amplitude and a phase. In these unknowns, the functional takes a form similar to the celebrated Ambrosio-Tortorelli functional \cite{AmbTort} from which one can get compactness.
In the non-symmetric case, the methods of \cite{AftRoyo,GolRoyo} do not apply. One has to find a different approach. Inspired by recent work on type-I superconductors \cite{CoGoOtSe}, we prove instead that for every $\lambda \le 1$, the energy~\eqref{funcintroter} directly controls a Modica-Mortola \cite{modica} type energy where $\eta_1$ and $\eta_2$ are decoupled (see~\eqref{mainestim}). 
As a consequence, we recover the compactness of sequences of bounded energy. Besides extending results of \cite{AftRoyo,GolRoyo} to the non-symmetric case, we believe that this more direct approach also has an intrinsic interest. 
In fact, it seems more natural and gives a better understanding of how the interaction between the two condensates via the term $\int_{\Omega} \eta_1^2 \eta_2^2$ gives to the energy a structure of a double well potential. Using then classical arguments from $\Gamma-$convergence such as the slicing method \cite{braides}, 
we can prove our last main theorem:

\begin{theorem}\label{gamma2intro}
 When $\eps\to 0$, $\jj_\eps$ $\Gamma-$converges for the strong $L^1$ topology to 
 \[\J(\eta_1,\eta_2)=\begin{cases}
                                \sigma_{\lambda,K} P(E,\Omega) & \textrm{if } \eta_1=\chi_E=1-\eta_2 \textrm{ and } |E|=\alpha_1\\
                                +\infty & \textrm{otherwise},
                               \end{cases}\]
where, for a set of finite perimeter $E$ (see \cite{AmbFusPa,Maggibook}), $P(E,\Omega)$ denotes the relative perimeter of $E$ inside $\Omega$ and where $\sigma_{\lambda,K}>0$ is defined by the one dimensional optimal transition problem
\[\sigma_{\lambda,K}=\inf \left\{ \int_{\R} | \eta'_1|^2 +\lambda^2 | \eta'_2|^2+  \frac{1}{2}\left(\eta_1^2+\eta_2^2-1\right)^2 +(K-1) \eta_1^2\eta_2^2
: \lim_{-\infty} \eta_1=0, \ \lim_{+\infty} \eta_1=1\right\}.
\]
 
\end{theorem}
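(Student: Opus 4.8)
The plan is to follow the classical three-step scheme for $\Gamma$-convergence of Modica--Mortola type functionals: first prove equicoercivity (i.e.\ $L^1$-compactness of any sequence $(\eta^\eps)$ with $\sup_\eps\jj_\eps(\eta^\eps)<\infty$, together with the identification of the limit as a pair $(\chi_E,\chi_{E^c})$ with $|E|=\alpha_1$); then the $\liminf$ inequality $\liminf_\eps\jj_\eps(\eta^\eps)\ge\bsigma\,P(E,\Omega)$ along such a sequence; and finally the existence of a recovery sequence. The engine of the whole argument is the observation that, for $K>1$, the coupling potential $W(\eta_1,\eta_2):=\tfrac12(\eta_1^2+\eta_2^2-1)^2+(K-1)\eta_1^2\eta_2^2$ is a genuine double-well, vanishing exactly on $\{(\pm1,0),(0,\pm1)\}$, and — crucially — that minimizing it in the second variable produces a \emph{single-variable} double-well $w_1(s):=\min_{\tau\ge0}W(s,\tau)$ which vanishes precisely at $s=0$ and $|s|=1$ and grows (like $s^4$) at infinity; symmetrically one obtains $w_2$. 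Hence pointwise $W(\eta_1,\eta_2)\ge w_1(\eta_1)$ and $W(\eta_1,\eta_2)\ge w_2(\eta_2)$, so $\jj_\eps$ dominates the two \emph{decoupled} Modica--Mortola energies $\eps\int_\Omega|\nb\eta_i|^2+\tfrac1\eps\int_\Omega w_i(\eta_i)$. This is the estimate announced in~\eqref{mainestim}, in the spirit of \cite{CoGoOtSe}, and it replaces the sigma-model reformulation of \cite{AftRoyo,GolRoyo}, which is unavailable when $\lambda\neq1$.

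For \textbf{compactness}, assume $\sup_\eps\jj_\eps(\eta^\eps)<\infty$. The growth of $W$ gives a uniform $L^4(\Omega)$ bound on each $\eta_i^\eps$, while the decoupled estimate and Young's inequality show that $\Phi_1(\eta_1^\eps)$ is bounded in $BV(\Omega)$, where $\Phi_1'=\sqrt{w_1}$; since $\Omega$ is bounded, $\Phi_1(\eta_1^\eps)$ is precompact in $L^1(\Omega)$, and using the $L^4$ bound to pass the continuous map $\Phi_1^{-1}$ to the limit (Vitali), $\eta_1^\eps$ is precompact in $L^1(\Omega)$ with every limit satisfying $w_1(\eta_1)=0$ a.e., i.e.\ $\eta_1=\chi_E$ for a set $E$ of finite perimeter (taking, as is natural here, $\eta_i\ge0$). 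The same applies to $\eta_2^\eps\to\chi_F$. Since $\tfrac1\eps\int_\Omega W(\eta^\eps)\le\jj_\eps(\eta^\eps)$ forces $\int_\Omega(\eta_1^2+\eta_2^2-1)^2\le C\eps\to0$, in the limit $\eta_1^2+\eta_2^2=1$ a.e., hence $F=\Omega\setminus E$ up to a null set, and passing to the limit in $\int_\Omega(\eta_1^\eps)^2=\alpha_1$ yields $|E|=\alpha_1$. In particular, if the prescribed $L^1$-limit of a sequence is not of this form, then $\liminf_\eps\jj_\eps=+\infty$, which is consistent with $\J=+\infty$ there.

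For the \textbf{$\liminf$ inequality} we may therefore assume, up to subsequences, that $(\eta_1^\eps,\eta_2^\eps)\to(\chi_E,\chi_{E^c})$ with $|E|=\alpha_1$ and bounded energies, and we must recover the sharp constant $\bsigma$. I would use the slicing method: fix a unit vector $\nu$; for $\mathcal{H}^1$-a.e.\ line $\ell$ in direction $\nu$ one has $|\nb\eta_i^\eps|^2\ge|\partial_\nu\eta_i^\eps|^2$, and by Fubini the contribution of $\ell$ is at least the one-dimensional energy $\int_\ell\eps(|(\eta_1^\eps)'|^2+\lambda^2|(\eta_2^\eps)'|^2)+\tfrac1\eps W$. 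On such $\ell$ the pair converges in $L^1_{\rm loc}$ to $(\chi_I,\chi_{I^c})$, and after the rescaling $x=\eps y$ one extracts, near each jump of $\chi_I$, a transition profile connecting the wells $(1,0)$ and $(0,1)$ in one order or the other (the two orders cost the same by the symmetry $y\mapsto-y$), so by the definition and the positivity of $\bsigma$ the 1D energy is asymptotically at least $\bsigma$ times the number of jumps of $\chi_I$ on $\ell$. Applying Fatou over the lines and then a standard measure-theoretic superadditivity/localization argument — optimising over finitely many directions $\nu$ together with a partition of unity, as in \cite{braides} — upgrades the lower bounds on the projected perimeters to $\bsigma\,P(E,\Omega)$.

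For the \textbf{recovery sequence}, assume $P(E,\Omega)<\infty$; approximate $E$ in the strong $BV$-sense by sets with smooth boundary meeting $\partial\Omega$ transversally, and restore $|E|=\alpha_1$ by an infinitesimal dilation, both at an $o(1)$ cost. For smooth $E$, set $\eta_i^\eps(x)=\phi_i(d_E(x)/\eps)$, where $d_E$ is the signed distance to $\partial E$ and $(\phi_1,\phi_2)$ is a near-optimal profile for $\bsigma$, truncated so as to reach the wells at distance $L_\eps\eps$ with $L_\eps\to\infty$ slowly; since $|\nb d_E|=1$, the coarea formula turns $\jj_\eps(\eta^\eps)$ into $\big(\mathcal{H}^1(\partial E\cap\Omega)+o(1)\big)\int_\R\big(|\phi_1'|^2+\lambda^2|\phi_2'|^2+W(\phi_1,\phi_2)\big)$, which tends to $\bsigma\,P(E,\Omega)$ as the profile approaches the optimiser, and a final $o(1)$-energy perturbation away from $\partial E$ (or a small translation of $E$) re-imposes the two mass constraints exactly. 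The \textbf{main obstacle} is the compactness step: since the sigma-model trick of \cite{AftRoyo,GolRoyo} fails for $\lambda\neq1$, everything hinges on the algebraic decoupling inequalities $W(\eta_1,\eta_2)\ge w_i(\eta_i)$ with the single-variable wells in the right place, and on checking that they do yield individual $L^1$-compactness of $\eta_1^\eps$ and $\eta_2^\eps$ together with the constraint $\eta_2=1-\eta_1$; once this is in hand, steps (ii)--(iii) are routine slicing and optimal-profile arguments, modulo the usual bookkeeping for the volume constraint and the behaviour near $\partial\Omega$.
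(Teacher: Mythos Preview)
Your proposal is correct and follows essentially the same route as the paper: the key decoupling inequality $W_K(\eta_1,\eta_2)\ge w_K(\eta_i)$ (your $w_i$, the paper's~\eqref{wK}--\eqref{mainestim}) is precisely what the authors use to obtain compactness when $\lambda\neq1$, and the $\liminf$ via slicing and the $\limsup$ via the signed-distance/optimal-profile construction are the same as in the paper. The only cosmetic differences are that the paper first truncates to reduce to $L^\infty$-bounded sequences (you use the $L^4$ growth plus Vitali instead) and fixes the mass constraint in the recovery sequence by the explicit multiplicative rescaling $\tilde\eta_i^\eps=(\sqrt{\alpha_i}/\|\eta_i^\eps\|_2)\,\eta_i^\eps$, which is cleaner than the ``$o(1)$-perturbation or small translation'' you allude to.
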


At last, in the spirit of what was done in \cite{GolRoyo} in the case $\lambda=1$ (see also the recent paper \cite{AftSour}), we study the asymptotic behavior of $\sigma_{\lambda,K}$ as $K$ goes to one (mixing) or $K$ goes to
infinity (strong segregation) and recover good parts of what is expected from the physics literature \cite{barankov,BVS,Aochui,Timmermans}, namely
\[
\lim_{K\to 1} \frac{\sigma_{\lambda,K}}{\sqrt{K-1}}\,=\, \dfrac23\dfrac{1-\lambda^3}{1-\lambda^2},\qquad\quad \sigma_{\lambda,K}-\sigma_{\lambda,\infty}\,\stackrel{\frac{\lambda^2}{K}\downarrow0}{\sim}\, -\lt(\dfrac1{K^{{1/2}}} +\dfrac{\lambda^{1/2}}{K^{1/4}}\rt),
\]
see Propositions~\ref{propK1} and~\ref{propKoo}.

Before closing this introduction, let us point out that most of the results, in particular Theorem \ref{maintheointro}, Theorem \ref{gammaintro} and Theorem \ref{gamma2intro} can be easily generalized to arbitrary dimension and arbitrary radially symmetric strictly increasing confining potentials.\\

The structure of the paper is the following.  In Section \ref{difg}, we study the case of different intracomponent strengths in the presence of a confining potential. We then study in Section \ref{crossover} 
the crossover case and finally in Section \ref{notrap}, we investigate the case of non-equal intracomponent strengths in the absence of confining potential and in a bounded domain.

\section*{Notation}

For $x\in \R^2$ and $r>0$, we denote by $B_r(x)$ the ball of radius $r$ centered at $x$ and simply write $B_r$ when $x=0$. 
Given a set $E\subset \R^2$, we let $\chi_E$ be the characteristic function of $E$.  For any integer $k$, we denote by $\mathcal{H}^k$ the $k-$dimensional Hausdorff measure  The letters, $c, C$ denote universal constants which can vary from line to line. We also make use of the usual $o$ and $O$ notation.  The symbols $\sim$, $\ges$, $\les$ indicate estimates that hold up to a positive constant. For instance, $f\les g$ denotes the existence of a constant $C>0$ such that $f\le Cg$.
 Throughout the paper, with a small abuse of language, we call sequence a family $(u_\eps)$ of functions 
labeled by a continuous parameter $\eps\in (0,1]$. A subsequence 
of $(u_\eps)$ is any sequence $(u_{\eps_k})$ such that $\eps_k
\to 0$ as $k \to +\infty$. For $1\le p\le+ \infty$ and a function $f$, we denote the $L^p$ norm of $f$ by $\|f\|_p$. In the sequel, when it is clear from the context, we omit to indicate the integrating measure. 
In particular, all integrals involving boundaries are with respect to $\mathcal{H}^1$ and all integrals involving sets are with respect to the Lebesgue measure.\\

\section{The case of different intracomponent strengths}\label{difg}

\subsection{The Thomas-Fermi profile}
Let us consider the Thomas-Fermi approximation  and prove Theorem~\ref{maintheointro}. Let $\alpha=(\alpha_1,\,\alpha_2)\in (0,+\oo)^2$. We denote by $X$ the set of pairs of measurable functions $(\rho_1,\rho_2)\ :\ \R^2\to \R_+^2$ and by  $X_\alpha$ the subset of pairs $\rho\in X$ satisfying 
\begin{equation*}
  \int_{\R^2} \rho_j \  = \  \alpha_j \quad \mbox{  for }
  \  j = 1, 2.%\label{Xa}
\end{equation*}
%and define 
%\[
%Y_\alpha\ = \ \{\eta=(\eta_1,\eta_2)\in X \ :\ ((\eta_1)^2,(\eta_2)^2)\in X_\alpha \}.
%\]
For $\rho\in X$ we define the energy
\begin{equation*}%\label{thomasfermimain}
  E ( \rho) = 
  \int_{\R^2} \frac{1}2 \rho_1^2   + \frac{g}2 \rho_2^2  +  K \rho_1\rho_2 + V(\rho_1+\rho_2).
\end{equation*}
 As stated in Theorem~\ref{maintheointro}, we study the minimization of $E$ in the class $X_\alpha$: we prove the existence of a unique minimizer for this problem and a stability result.

We start by studying the minimization problem in the following subsets : given $r>0$, let  
\[
X^{0,r}_\alpha\ =\ \big\{\rho\in X_\alpha\ :\ \supp(\rho_1)\subset \ov{B_r},\ \supp(\rho_2)\subset \R^2\setminus B_r \
%\mbox{ and } \su{\rho} \mbox{ is radial.}
\big\}.
\]
and 
\begin{equation}\label{X0alpha}
X^{0}_\alpha\ =\ \bigcup_{r>0}X^{0,r}_\alpha.
\end{equation}
\begin{figure}[ht]
\psfrag{a}{$\rho^0_1$}
\psfrag{b}{$\rho^0_2$}
\psfrag{c}{$-r_0$}
\psfrag{d}{$r_0$}
\psfrag{e}{$\sigma_+$}
\psfrag{f}{$\sigma_-$}
\psfrag{t}{$T$}
\begin{center}
\includegraphics[scale=1.1]{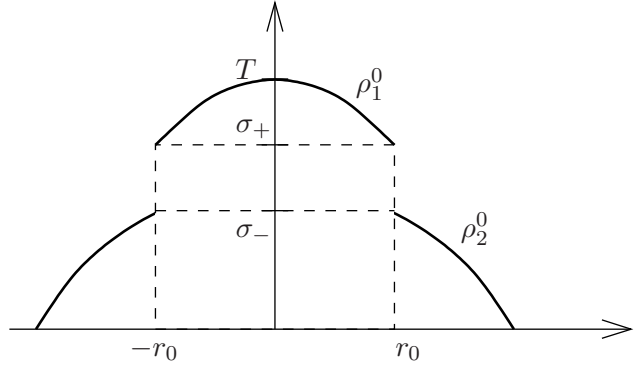}
\end{center}
\caption{\label{rho0} Minimizer of the Thomas-Fermi energy.}
\end{figure}
\begin{lemma}

\label{lemcpct1} 
For $K\geq \sqrt{g}>1$ and $r>0$, $E$ admits a unique minimizer $\rho^{0,r}=(\rho^{0,r}_1,\rho^{0,r}_2)$ in $X_\alpha^{0,r}$.  Let us set 
\[ 
\ov{r}=\min(r,r_1)\quad\mbox{with }r_1=\lt( \frac{2\alpha_1}\pi \rt)^{1/4}\quad\mbox{and then }\quad R_1^2\,=\,\frac{\ov{r}^2}{2}+\frac{r_1^4}{2\ov{r}^2} ,\quad R_2^2\,=\,r^2+\left(\frac{2 g\alpha_2}{\pi}\right)^{1/2}.% \lambda_1\, =\, \dfrac{r_0^4+r_1^4}{2r_0^2},\quad \lambda_2\, =\, r_0^2 +\lt(\dfrac{2 g\alpha_2}{\pi}\rt)^{1/2}.
\] 
The unique minimizer is given by 
\be\label{formrhor} 
\rho^{0,r}_{1}(x)\ =\ \begin{cases} (R_1^2-|x|^2)_{+} & \mbox{if }|x|<r,\\
~\qquad0&\mbox{if } |x|>r,
\end{cases}
\qquad\ 
\rho^{0,r}_{2}(x)\ =\ \begin{cases} ~\qquad0 & \mbox{if }|x|<r,\\
 \frac{1}{g}(R^2_2-|x|^2)_{+} & \mbox{if }|x|>r.
\end{cases}\quad~
\ee

Optimizing in $r>0$, the minimum of $E$ in $X^0_\alpha$ is reached at $\rho^0=\rho^{0,r_0}$ (see Figure \ref{rho0}) with
\be\label{r0}
r_0\, =\, r_1\lt( \sqrt{1 + \frac{\alpha_2}{\alpha_1} }  - \sqrt{\frac{\alpha_2}{\alpha_1}} \rt)^{1/2}\, \in(0,r_1).%\qquad\mbox{where}\quad r_1=\lt( \frac{2\alpha_1}\pi \rt)^{1/4}.
\ee
%\be\label{r0}
%r_0\ =\ \left(\frac{2 }{\pi}\right)^{1/4} \alpha_2^{1/4}\left( \left(1+\frac{\alpha_1}{\alpha_2}\right)^{1/2}-1\right)^{1/2}.
%\ee
%The unique minimizer is given by 
%\be\label{formrho0}
%\rho^0_{1}(x)\ =\ \begin{cases} (\lambda_1-|x|^2)_{+} & \mbox{if }|x|<r_0,\\
%~\qquad0&\mbox{if } |x|>r_0,
%\end{cases}
%\qquad\ 
%\rho^0_{2}(x)\ =\ \begin{cases} ~\qquad0 & \mbox{if }|x|<r_0,\\
% \frac{1}{g}(\lambda_2-|x|^2)_{+} & \mbox{if }|x|>r_0.
%\end{cases}\quad~
%\ee
In particular, there is a positive gap between $\rho^0_2$ and $\rho^0_1$ at the frontier $\partial B_{r_0}$:
\be\label{gap}
\sigma_+\,=\,\inf_{B_{r_0}} \rho^0_1 \, =\,  \sqrt{g} \sup \rho^0_2\, >\, \sup  \rho^0_2\,=\,\sigma_-.
\ee
The minimal energy is 
\be\label{E0}
E_0\, =\, E(\rho^0)\ =\ \frac{2}{3} \sqrt{\frac{2 }\pi} \lt( (\alpha_1+\alpha_2)^{3/2} +   \left(\sqrt{g}-1 \right) \alpha_2^{3/2}  \rt).
\ee
%Eventually, there exist $c_1,c_2>0$, depending on $g$ and $\alpha$ such that
%\begin{eqnarray}
%\label{improve5}
%\forall r>0, \qquad E^r-E_0& \geq & c_1|r-r_0|^2,\\
%\label{improve6}
%\forall r>0, \qquad|r-r_0|& \ge & c_2\lt( \|\rho^{0,r}-\rho^0\|_{1} + \|\rho^{0,r}-\rho^0\|_{2}^2\rt).
%\end{eqnarray}
\end{lemma}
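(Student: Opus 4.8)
The plan is to exploit the fact that on $X_\alpha^{0,r}$ the two densities have essentially disjoint supports. Indeed, for $\rho\in X_\alpha^{0,r}$ the supports of $\rho_1$ and $\rho_2$ can meet only along $\partial B_r$, a Lebesgue‑null set, so the coupling term $K\int_{\R^2}\rho_1\rho_2$ vanishes identically (in particular the value of $K$ is irrelevant for this lemma) and the energy decouples,
\[E(\rho)=E_1^{(r)}(\rho_1)+E_2^{(r)}(\rho_2),\qquad E_1^{(r)}(\rho_1)=\int_{\ov{B_r}}\tfrac12\rho_1^2+|x|^2\rho_1,\quad E_2^{(r)}(\rho_2)=\int_{\R^2\setminus B_r}\tfrac g2\rho_2^2+|x|^2\rho_2.\]
I would therefore minimise the two components independently, each over its own \emph{convex} admissible set (densities that are nonnegative, supported where required, and of prescribed mass $\alpha_j$). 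Since $t\mapsto t^2$ is strictly convex, each $E_j^{(r)}$ is strictly convex there, so the minimiser, once exhibited, is automatically unique.

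To exhibit it I would use the elementary identity, valid for $\F(\rho)=\int\tfrac c2\rho^2+V\rho$,
\[\F(\sigma)-\F(\rho^*)=\int(c\rho^*+V)(\sigma-\rho^*)+\tfrac c2\|\sigma-\rho^*\|_2^2\]
(interpreting $\F(\sigma)=+\infty$ when its left-hand side is not finite). It shows that $\rho^*$ minimises as soon as $c\rho^*+V$ equals a constant $\mu$ on $\{\rho^*>0\}$ and is $\ge\mu$ on the rest of the admissible region, for then (splitting the integral over $\{\rho^*>0\}$ and its complement, where $\sigma-\rho^*=\sigma\ge0$) one gets $\int(c\rho^*+V)(\sigma-\rho^*)\ge\mu\int(\sigma-\rho^*)=0$. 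Applied to $E_1^{(r)}$ (with $c=1$), this forces $\rho^{0,r}_1=(\mu_1-|x|^2)_+$ restricted to $\ov{B_r}$; setting $\mu_1=R_1^2$, the mass equation $\int_{B_r}(R_1^2-|x|^2)_+=\alpha_1$ is a single scalar identity whose solution has two regimes, according to whether the free Thomas--Fermi parabola (of support radius $r_1=(2\alpha_1/\pi)^{1/4}$) fits inside $B_r$ or is truncated by $\partial B_r$; this gives exactly the formula stated, with $\ov r=\min(r,r_1)$ and $R_1^2=\ov r^2/2+r_1^4/(2\ov r^2)$. Applied to $E_2^{(r)}$ (with $c=g$), it forces $\rho^{0,r}_2=\tfrac1g(\mu_2-|x|^2)_+$ restricted to $\R^2\setminus B_r$; since $\alpha_2>0$ one necessarily has $\mu_2=R_2^2>r^2$, the support is the annulus $\{r<|x|<R_2\}$, and the mass equation gives $(R_2^2-r^2)^2=2g\alpha_2/\pi$, i.e. $R_2^2=r^2+(2g\alpha_2/\pi)^{1/2}$. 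The required inequality $c\rho^*+|x|^2\ge\mu$ off the support is automatic: there it reads $|x|^2\ge\mu$, which holds by construction. (Alternatively existence could be obtained by the direct method, using the potential term for tightness of $\rho_2$ at infinity, but the convexity argument makes this unnecessary.)

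Next I would optimise $e(r):=E(\rho^{0,r})$ over $r>0$. Using $\tfrac c2\rho^2+V\rho=\mu\rho-\tfrac c2\rho^2$ on the support of $\tfrac1c(\mu-V)_+$, together with the explicit profiles, one computes $e$ in closed form: $E_2^{(r)}(\rho^{0,r}_2)=r^2\alpha_2+\tfrac23\alpha_2(2g\alpha_2/\pi)^{1/2}$ is affine in $r^2$, while $E_1^{(r)}(\rho^{0,r}_1)$ is $C^1$, constant for $r\ge r_1$, and blows up like $r^{-2}$ as $r\to0^+$. The stationarity equation $e'(r)=0$ reduces to $\pi(r^4-r_1^4)^2=8\alpha_2 r^4$, whose only root in $(0,r_1)$ is $r_0=r_1(\sqrt{1+\alpha_2/\alpha_1}-\sqrt{\alpha_2/\alpha_1})^{1/2}$; since $\sqrt{1+t}-\sqrt t\in(0,1)$ for $t>0$, this $r_0$ indeed lies in $(0,r_1)$. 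Because $e$ has no other stationary point, tends to $+\infty$ as $r\to0^+$, and is increasing on $[r_1,\infty)$, this $r_0$ is the global minimiser, so $\rho^0=\rho^{0,r_0}$. For the gap, at $r=r_0<r_1$ the profile $\rho^0_1$ equals $R_1^2-|x|^2$ throughout $B_{r_0}$, hence $\sigma_+=\inf_{B_{r_0}}\rho^0_1=R_1^2-r_0^2=(r_1^4-r_0^4)/(2r_0^2)$, while $\sigma_-=\sup\rho^0_2=\tfrac1g(R_2^2-r_0^2)=\tfrac1g(2g\alpha_2/\pi)^{1/2}$; substituting the stationarity relation $r_1^4-r_0^4=2r_0^2(2\alpha_2/\pi)^{1/2}$ into the former yields $\sigma_+=(2\alpha_2/\pi)^{1/2}=\sqrt g\,\sigma_-$, which is $>\sigma_-$ precisely because $g>1$. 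Finally $E_0=e(r_0)$ comes out of substituting $r_0$ into the closed form of $e$, a routine computation giving $\tfrac23\sqrt{2/\pi}\big((\alpha_1+\alpha_2)^{3/2}+(\sqrt g-1)\alpha_2^{3/2}\big)$ (which for $g=1$ reduces to the Thomas--Fermi energy of a single component of mass $\alpha_1+\alpha_2$, a useful consistency check).

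The hard part is not conceptual but bookkeeping: getting the two regimes for $\rho_1$ right (the origin of $\ov r=\min(r,r_1)$), and carrying out the one-variable optimisation in $r$ carefully --- in particular confirming that the stationary point found is the \emph{global} minimum, which uses the blow-up of $E_1^{(r)}$ as $r\to0^+$ together with the monotonicity of $E_2^{(r)}$. The one structural remark that makes the last part transparent is that the ``pressure-balance'' gap relation $\sigma_+=\sqrt g\,\sigma_-$ is nothing other than the first-order optimality condition for the interface radius $r$.
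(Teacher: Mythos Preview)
Your proof is correct and follows essentially the same route as the paper: decouple the energy into two independent convex problems on $B_r$ and $\R^2\setminus B_r$, identify the minimizers via the Euler--Lagrange conditions and the mass constraints, and then optimize the resulting one-variable function of $r$. The only cosmetic differences are that the paper substitutes $t=(r/r_1)^2$ and invokes $f''>0$ on $(0,1)$ to locate the unique minimizer, while you work directly with the stationarity equation $\pi(r^4-r_1^4)^2=8\alpha_2 r^4$ together with the blow-up at $r\to0^+$ and the monotonicity on $[r_1,\infty)$; your explicit derivations of the gap identity $\sigma_+=\sqrt{g}\,\sigma_-$ and of the pressure-balance interpretation are a little more detailed than the paper's ``after some algebraic computations''.
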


\begin{proof}~\\
Let $r>0$. The optimization of $E$ in $X^{0,r}_\alpha$ splits into two independent optimization problems. From the associated Euler-Lagrange equations, 
we see that the minimizers have the form given by~\eqref{formrhor} with $R_1$, $R_2$ fixed by the conditions $\int_{\R^2} \rho_j=\alpha_j$ for $j=1,2$. 
\\

Let us now study the variation of $r\mapsto E^r=\min_{X_\alpha^{0,r}}E= E_1(\rho_1^{0,r})+E_2(\rho_2^{0,r})$. Using the notation $t=(r/r_1)^2$, we obtain by direct computation,
\[
E^r\,=\, f(t) +\dfrac{2}{3}\left(\frac{2 g \alpha_2^3}{\pi}\right)^{1/2},
\]
with
\[f(t)\, =\, 
\begin{cases}
\dfrac\pi{24} r_1^6\left( 6t + \dfrac{3}{t}- t^3\right) + \alpha_2 r_1^2 t \,  & \mbox{ for } t<1,\\ \\
 \dfrac\pi{3} r_1^6+ \alpha_2 r_1^2 t &\mbox{ for }t\geq 1.
\end{cases}
\]
We have $f\in C^2(0,+\oo)$ with $f''>0$ in $(0,1)$,  $f(t)\to+\oo$ as $t\dw0$ and $f'=f'(1)>0$ on $(1,+\oo)$. Therefore, $f$ admits a unique minimiser $t_0=(r_0/r_1)^2\in(0,1)$.\\% and~\eqref{improve5} holds true. \\
Eventually,  after some algebraic computations, we get that $r_0$ is given by~\eqref{r0}, we also obtain~\eqref{gap} and~\eqref{E0}.%  Eventually, we easily obtain inequality~\eqref{improve6} from the explicit formula~\eqref{formrhor}.
\end{proof}

We are now ready to prove Theorem~\ref{maintheointro}.

\begin{proof}[Proof of Theorem~\ref{maintheointro}]
Let $\rho\in X_\alpha$. We want to show that $E(\rho)\geq E(\rho^0)$. Since $\rho^0$ and $E(\rho^0)$ do not depend on $K$ and $E(\rho)$ is a non decreasing function of $K$, we only consider the worst case $K=\sqrt{g}$. Then, we write,
\[
\rho=\rho^0+\delta\!\rho,\qquad \mbox{with $\delta\!\rho=(\delta\!\rho_1,\delta\!\rho_2)$.}
\]
The energy {expands} as
\[
E(\rho)\,=\, E(\rho^0) + L_1(\delta\!\rho_1) + L_2(\delta\!\rho_2)+Q(\delta\!\rho),
\]
with 
\[
L_1(\delta\!\rho_1)\, =\, \int_{\R^2} (\rho^0_1+\sqrt{g}\rho^0_2 +|x|^2 )\delta\!\rho_1,\qquad L_2(\delta\!\rho_2)\,=\, \int_{\R^2} (g \rho^0_2+\sqrt{g} \rho^0_1 +|x|^2 )\delta\!\rho_2,
\]
\[
Q(\delta\!\rho)\,=\, \dfrac12 \int_{\R^2} (\delta\!\rho_1 +\sqrt{g}\delta\!\rho_2)^2.
\]
This last term is obviously {non-negative}. Let us study the terms $L_1(\delta\!\rho_1)$ and $L_2(\delta\!\rho_2)$.  Let $r_0$, $R_1$, $R_2$ be as in the definition of $\rho^0$ in Lemma~\ref{lemcpct1}. We denote  by $A$ the annulus $B_{R_2}\setminus\ov{B_{r_0}}$ and by $U$ the {exterior} domain $\R^2\setminus B_{R_2}$.  Using the definition of $\rho^0_1$ and $\rho^0_2$, we have
\[
L_1(\delta\!\rho_1)\, =\, R_1^2 \int_{B_{r_0}} \delta\!\rho_1 + \int_{A} \lt[\dfrac{1}{\sqrt{g}} R_2^2  + \lt(1-\dfrac{1}{\sqrt{g}} \rt) |x|^2\rt]\delta\!\rho_1+ \int_{U} |x|^2\delta\!\rho_1.
\]
Since $\int_{\R^2} \delta\!\rho_1=0$, we have $\int_{B_{r_0}} \delta\!\rho_1=-\int_{A\cup U}\delta\!\rho_1$. Using this, we get,
\[
L_1(\delta\!\rho_1)\,=\, \int_{A\cup U} c_1(|x|) \delta\!\rho_1,\qquad\mbox{with }c_1(r)\,=\,\begin{cases} \dfrac{1}{\sqrt{g}} (R_2^2 -r^2) - (R_1^2 -r^2)&\mbox{ for } r_0<r<R_2,\\
\qquad r^2-R_1^2&\mbox{ for } r\geq R_2.
\end{cases}
\]
We notice that $c_1$ is continuous on $[r_0,+\oo)$ and of the form $a+br^2$ with $b>0$ on the two intervals $(r_0,R_2)$ and $(R_2,+\oo)$. Moreover  $c_1(r_0)=0$. % and  from the explicit formula of Lemma~~\ref{lemcpct1}, \[c_1(R_2)\,=\, R_2^2-R_1^2\, =\, (\sqrt g - 1)\sqrt{2 \alpha_2/\pi}\, >\, 0.\]
Hence $c_1>0$ in $(r_0,+\oo)$, $c_1(r_0)=0$, $c_1'(r_0)>0$ and $c_1(r)\stackrel{r\uparrow\oo}{\sim} r^2$. {Now recall that $\rho_1^0+\delta \!\rho_1\geq 0$ and since $\rho_1^0\equiv 0$ in $A\cup U$, we have $\delta\!\rho_1\geq 0$ in $A\cup U$. Hence, $L_1(\delta\!\rho_1)\geq 0$.}

Similarly, 
\[
L_2(\delta\!\rho_2)\,=\,\int_{B_{r_0}\cup U} c_2(|x|) \delta\!\rho_2,\qquad\mbox{with }c_2(r)\,=\,\begin{cases}  \sqrt{g}(R_1^2 -r^2) - (R_2^2 -r^2)&\mbox{ for } r<r_0,\\
\qquad r^2-R_2^2&\mbox{ for } r>R_2.
\end{cases}
\] 
In the unbounded interval  $(R_2,+\oo)$, we have $c_2(r)>0$ with $c_2(R_2)=0$, $c_2'(R_2)>0$ and $c_2(r)\stackrel{r\uparrow\oo}{\sim} r^2$. In the interior interval $(0,r_0)$, we have $c_2>0$, with $c_2(r_0)=0$, $c_2'(r)<0$.
Hence $c_2>0$ in $[0,r_0)$. Since $\delta\!\rho_2\geq 0$ in $B_{r_0}\cup U$, we have $L_2(\delta\!\rho_2)\geq0$. 
\medskip

To sum up, we have establish that for $\rho=\rho^0+\delta\!\rho\in X_\alpha$, there holds
\[
E(\rho)\,=\, E(\rho^0) +H(\delta\!\rho)\, \geq\, E(\rho^0),
\]
where we have set 
\[
H(\delta\!\rho)\, =\,  \int_{A\cup U} c_1(|x|) \delta\!\rho_1\, +  \int_{B_{r_0}\cup U} c_2(|x|) \delta\!\rho_2 \,+\dfrac12\int_{\R^2} (\delta\!\rho_1+\sqrt{g}\delta\!\rho_2)^2.
\]
\bigskip

Now, for proving the stability estimate~\eqref{quantiestim}, we have to show that there exists $C\geq 0$ such that for every function $\delta\!\rho=(\delta\!\rho_1,\delta\!\rho_2)$ such that $\rho^0+\delta\!\rho\in X_\alpha$, there holds
\begin{equation}\label{preuvestab0}
 \|\delta\!\rho\|_{L^1}^2 \, \leq\, C H(\delta\!\rho).
 \end{equation}

We prove~\eqref{preuvestab0} in several steps. 
First let us set $R=R_2+1$, $V=\R^2\setminus\ov{B_R}$ and $\tilde A=\ov{A\cup U}\setminus V$ so that $B_{r_0}\cup\tilde{A}\cup V=\R^2$. We estimate successively $\|\delta\!\rho\|_{L^1(V)}$, $\|\delta\!\rho\|_{L^1(\tilde A)}$ and $\|\delta\!\rho\|_{L^1(B_{r_0})}$.\\
{\it Step 1. Estimating $\|\delta\!\rho\|_{L^1(V)}$.}  In the set $V$, we have $\delta\!\rho_1,\delta\!\rho_2\geq 0$. Using the fact that $c_1$ and $c_2$ are positive and increasing on $[R,+\oo)$, we deduce 
\[
\int_{V}|\delta\!\rho_1|+|\delta\!\rho_2|\, \leq\, \dfrac1{\min(c_1(R),c_2(R))} \int_V c_1(|x|)\delta\!\rho_1 + c_2(|x|)\delta\!\rho_2\, \leq\, C H(\delta\!\rho).
\]
Since the condition $\rho^0+\delta\!\rho \in X_\alpha$ implies $\int_{\R^2} |\delta\!\rho_j|\leq 2\alpha_j$ for $j=1,2$, we get 
\begin{equation}\label{preuvestab1}
 \|\delta\!\rho\|_{L^1(V)}^2 \, \leq\, C H(\delta\!\rho).
 \end{equation}
 
 {\it Step 2. Estimating $\|\delta\!\rho\|_{L^1(\tilde A)}$.} Notice that $\delta\!\rho_1\geq 0$ in $\tilde A$. We split the annulus $\tilde A$ into three subsets $\tilde A=\tilde A_1 \cup \tilde A_2 \cup \tilde A_3$ with
 \begin{eqnarray*}
 \tilde A_1&=& \lt\{x\in \tilde{A}\, :\, \delta\!\rho_2(x)\geq 0 \rt\},\\
 \tilde A_2 &=& \lt\{x\in \tilde{A}\, :\, \delta\!\rho_2(x)< 0 \mbox{ and } \delta\!\rho_1(x)>-2\sqrt{g}\delta\!\rho_2\mbox{ or }-\sqrt{g}\delta\!\rho_2(x)>2\delta\!\rho_1(x) \rt\},\\
\tilde A_3&=& \{x\in \tilde{A}\, :\, \delta\!\rho_2(x)< 0\mbox{ and }  \delta\!\rho_1(x)/2 \leq -\sqrt{g}\delta\!\rho_2(x)\leq 2\delta\!\rho_1(x)\}.
\end{eqnarray*}
 {\it Step 2.1.} In $\tilde A_1$, we have $|\delta\!\rho_1|+|\delta\!\rho_2|\leq\delta\!\rho_1+\sqrt{g}\delta\!\rho_2$. Hence, by Cauchy-Schwarz, 
 \begin{equation}\label{preuvestab21}
 \|\delta\!\rho\|_{L^1(\tilde{A}_1)}^2 \,\leq \,|\tilde A_1|\int_{\tilde A_1} (\delta\!\rho_1+\sqrt{g}\delta\!\rho_2)^2\, \leq\, C H(\delta\!\rho).
 \end{equation}
  {\it Step 2.2.} In $\tilde A_2$, there holds  $|\delta\!\rho_1|+|\delta\!\rho_2|\leq C |\delta\!\rho_1+\sqrt{g}\delta\!\rho_2|$ and as in the previous step, 
   \begin{equation}\label{preuvestab22}
 \|\delta\!\rho\|_{L^1(\tilde{A}_2)}^2 \, \leq\, C H(\delta\!\rho).
 \end{equation}
 {\it Step.2.3.} In $\tilde A_3$, we deduce from $\delta\!\rho_1\leq-2\sqrt{g}\delta\!\rho_2$ and the condition $\rho_2^0+\delta\!\rho_2\geq0$ that 
 \[
 \delta\!\rho_1\,\leq\, -2\sqrt{g}\delta\!\rho_2\, \leq \, 2\sqrt{g}\max \rho_2^0\, =\, \lambda.
 \]
 Let us note 
 \[
 m\, =\, \int_{\tilde A_3}\delta\!\rho_1\, \leq\, \alpha_1.
 \]
 We have 
 \[
H(\delta\!\rho)\, \geq\,  \int_{\tilde A_3} c_1(|x|) \delta\!\rho_1 \, \geq\, \inf\lt\{ \int_{\tilde A} c_1(|x|) v\,:\, v\in L^1(\tilde A,[0,\lambda]),\ \int_{\tilde{A}} v = m\rt\}. 
 \]
 Since $c_1$ is radial and increasing, the solution of the optimization problem is given by $v_\star=\lambda\mathbf{1}_{B_{r_\star}\setminus B_{r_0}}$ where $r_\star>r_0$ is such that $\int_{\tilde{A}} v_\star=m$. Since $c_1(r)\geq c_1'(r_0)(r-r_0)$ for $r>r_0$ with $c_1'(r_0)>0$, we deduce 
 \[
\int_{\tilde A} c_1(|x|) v_\star(x)\, \geq \, c\, m^2,
 \] 
 for some $c>0$ depending on $\lambda$, $r_0$ and $c_1'(r_0)$. This yields 
 \[
  \lt(\int_{\tilde A_3}\delta\!\rho_1\rt)^2 \, =\, m^2\, \leq\, C  \int_{\tilde A} c_1(|x|) v_\star(x)\, \leq\, C H(\delta\!\rho).
 \]
 Since $|\delta\!\rho_2|\leq2/\sqrt{g}\, \delta\!\rho_1$ in $\tilde A_3$, we conclude that
   \begin{equation}\label{preuvestab23}
 \|\delta\!\rho\|_{L^1(\tilde{A}_3)}^2 \, \leq\, C H(\delta\!\rho).
 \end{equation}
 Gathering~\eqref{preuvestab21},\eqref{preuvestab22},\eqref{preuvestab23}, we get 
 \begin{equation}\label{preuvestab2}
 \|\delta\!\rho\|_{L^1(\tilde{A})}^2 \, \leq\, C H(\delta\!\rho).
 \end{equation}
 
  {\it Step 3. Estimating $\|\delta\!\rho\|_{L^1( B_{r_0})}$.} Proceeding as in Step 2 and exchanging the roles of $c_1$ and $c_2$ and $\delta\!\rho_1$ and $\sqrt{g}\delta\!\rho_2$, we obtain
  \begin{equation}\label{preuvestab3}
 \|\delta\!\rho\|_{L^1(B_{r_0})}^2 \, \leq\, C H(\delta\!\rho).
 \end{equation}
 Eventually, \eqref{preuvestab1},\eqref{preuvestab2},\eqref{preuvestab3} yield the desired estimate~\eqref{preuvestab0}.
\end{proof}

\begin{remark}
 One cannot hope for a stronger inequality of the form
 \[\|\rho-\rho^0\|_{1}\les  E(\rho)-E(\rho^0)\qquad\mbox{or}\qquad \|\rho-\rho^0\|^2_{2}\les   E(\rho)-E(\rho^0).\]
since one can easily get a contradiction by exchanging $\rho_1^{0}$ and $\rho^{0}_2$ on small balls close to $\partial B_{r_0}$.
\end{remark}

\subsection{Approximation of the Thomas-Fermi limit by the Gross-Pitaievskii functional}
We prove that minimizers of $F_\eps$ converge to $(\sqrt{\rho^0_1},\sqrt{\rho^0_2})$ as stated in Theorem~\ref{thmGPtoTF}.
% \begin{theorem}
% then 
% \be\label{quantconv}
% \lt\|\eta^\eps-\lt(\sqrt{\rho_1^0},\sqrt{\rho_2^0}\rt)\rt\|_2\le C \eps^{1/4}.
% \ee
%\end{theorem}
\begin{proof}[Proof of Theorem~\ref{thmGPtoTF}]
 Let $\eta^\eps$ be minimizers of $F_\eps$ for $\eps\in(0,1)$. Regularizing $(\sqrt{\rho^0_1},\sqrt{\rho^0_2})$, one can easily construct a competitor $\tilde\eta_\eps$ with $F_\eps(\tilde\eta_\eps)-{E_0}/{\eps}\le C$. In particular, for minimizers $\eta^\eps$, there holds
\[ \frac{E((\eta^\eps_1)^2,(\eta_2^\eps)^2)-E_0}\eps \le F_\eps(\eta^\eps)-\frac{E_0}{\eps}\le C\]
 so that~\eqref{quantiestim} implies 
 \[
 \|((\eta_1^\eps)^2,(\eta_2^\eps)^2)-(\rho^0_1,\rho^0_2)\|_{L^1} \, \leq\, C \eps^{1/2}.
 \]
 Now using that $|\sqrt{a}-\sqrt{b}|^2\leq |\sqrt{a}-\sqrt{b}||\sqrt{a}+\sqrt{b}|=|a-b|$, we have indeed that 
 \[
  \left\|(\eta_1^\eps,\eta_2^\eps)-\left(\sqrt{\rho^0_1},\sqrt{\rho^0_2}\right)\right\|_{L^2}\, \leq\, \left( \|((\eta_1^\eps)^2,(\eta_2^\eps)^2)-(\rho^0_1,\rho^0_2)\|_{L^1} \right)^{1/2} \, \leq\, C \eps^{1/4}.
 \]

\end{proof}

\section{The crossover case}\label{crossover}
We now study what happens when $g=1+\xi \eps$ for some $\xi>0$. For $\eta$ satisfying~\eqref{constrainteta}, the energy then reads
\[F_\eps(\eta_1,\eta_2)=\eps \int_{\R^2} |\nb \eta_1|^2 + |\nb \eta_2|^2 + \frac1\eps\left[\int_{\R^2}
  \frac{1}2 \eta_1^4   + \frac{1}2 \eta_2^4  +  K \eta^2_1\eta^2_2 +V(\eta_1^2+\eta_2^2)\right] + \frac{\xi}{2} \int_{\R^2} \eta_2^4.  \]
 
   Let us first rewrite the energy in a more convenient way. For this, let $\ov{\eta}_\eps$ be the minimizer of 
 \[G_\eps(\eta)=\eps \int_{\R^2} |\nb \eta|^2 + \frac1\eps  \int_{\R^2} \left[ \frac{1}2 \eta^4  +V\eta^2\right]\]
 under the constraint 
 \[\int_{\R^2} \eta^2 =\overline{\alpha},\]
 where $\overline{\alpha}=\alpha_1+\alpha_2$. It is well known \cite{IgMil, KaSour} that $\oee^2$ converges to
 \[\rhob=(R^2-V(x) )_+\]
 where $R$ is such that $\int_{\R^2} \rhob=1$. We denote by $\mathcal{D}$ the support of $\rhob$ (which is $B_R$ when $V=|x|^2$). We first rewrite the energy in a more convenient form.
 
 \begin{proposition}
  For $u=(u_1,u_2)$ a pair of non-negative functions, let 
  \[\wtilde{F}_\eps(u)= \eps \int_{\R^2}  \oee^2 (|\nabla u_1|^2+ |\nabla u_2|^2)+\frac{1}{\eps}\int_{\R^2} \left( \frac{1}{2} \oee^4\left(1- ( u_1^2+u_2^2)\right)^2 +(K-1) \oee^4 u_1^2u_2^2\right)\]
  then if  $\eta=\oee u$, there holds
  \[
  F_\eps(\eta)= G_\eps(\oee)+\wtilde{F}_\eps(u)+\int_{\R^2} \frac{\xi}{2} \oee^4 u_2^4
  \]

 \end{proposition}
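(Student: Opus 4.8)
The plan is to prove the identity by direct algebraic expansion, substituting $\eta = \oee u$ into $F_\eps(\eta)$ and regrouping terms so that the Thomas-Fermi reference energy $G_\eps(\oee)$ appears as a standalone piece. First I would write $\eta_j = \oee u_j$ and compute the gradient term: by the product rule $\nabla\eta_j = u_j\nabla\oee + \oee\nabla u_j$, so $|\nabla\eta_j|^2 = u_j^2|\nabla\oee|^2 + 2u_j\oee\,\nabla\oee\cdot\nabla u_j + \oee^2|\nabla u_j|^2$. Summing over $j$ and using $2u_j\nabla u_j = \nabla(u_j^2)$ together with $\oee\nabla\oee = \tfrac12\nabla(\oee^2)$, the cross term becomes $\tfrac12\nabla(\oee^2)\cdot\nabla(u_1^2+u_2^2)$, which after an integration by parts can be traded against $-\tfrac12(u_1^2+u_2^2)\Delta(\oee^2)$ — though it is cleaner to keep the cross term and instead absorb it using the Euler-Lagrange equation for $\oee$ (see below). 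The potential terms are immediate since $V(\eta_1^2+\eta_2^2) = V\oee^2(u_1^2+u_2^2)$ and similarly for the quartic terms: $\tfrac12\eta_1^4 + \tfrac12\eta_2^4 + K\eta_1^2\eta_2^2 = \oee^4(\tfrac12 u_1^4 + \tfrac12 u_2^4 + K u_1^2 u_2^2)$, and the extra asymmetry term is $\tfrac{\xi}{2}\eta_2^4 = \tfrac{\xi}{2}\oee^4 u_2^4$ — wait, the statement has $u_1^4$, so I need to be careful about which component carries the asymmetry; I would double-check the convention fixed earlier in the section (the paper writes $g$ multiplying $\eta_2^4$ and sets $g = 1+\xi\eps$, but the role of components may have been swapped when defining $E$ via $\rho_1^0 \subset B_{r_0}$), and present the computation consistent with that convention.

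The key algebraic point is the completion-of-square identity
\[
\tfrac12 u_1^4 + \tfrac12 u_2^4 + K u_1^2 u_2^2 \;=\; \tfrac12\bigl(u_1^2 + u_2^2 - 1\bigr)^2 + (K-1)u_1^2 u_2^2 + \bigl(u_1^2+u_2^2\bigr) - \tfrac12,
\]
which is checked by expanding the right-hand side: $\tfrac12(u_1^2+u_2^2)^2 - (u_1^2+u_2^2) + \tfrac12 + (K-1)u_1^2u_2^2 + (u_1^2+u_2^2) - \tfrac12 = \tfrac12 u_1^4 + \tfrac12 u_2^4 + u_1^2 u_2^2 + (K-1)u_1^2 u_2^2$, as required. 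Multiplying through by $\oee^4$ and combining with the potential contribution $V\oee^2(u_1^2+u_2^2)$, the terms proportional to $(u_1^2+u_2^2)$ organize into $\oee^2(\tfrac12\oee^2 + V)(u_1^2+u_2^2)$ plus a constant $-\tfrac12\oee^4$; the latter, plus the $u_1^2+u_2^2 \equiv 1$ piece of $G_\eps(\oee)$, is exactly what is needed to reconstruct $G_\eps(\oee)$ once one also uses the Euler-Lagrange equation $-2\eps^2\Delta\oee + \oee^3 + V\oee = \mu\oee$ (or rather its consequence after testing against $u_j^2\oee$) to kill the leftover weighted potential term $\oee^2(\tfrac12\oee^2+V)(u_1^2+u_2^2)$ against the gradient cross term. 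In practice the cleanest route avoids the Lagrange multiplier entirely: one shows that for any $u$, $F_\eps(\oee u) - G_\eps(\oee) - \widetilde F_\eps(u) - \tfrac{\xi}{2}\int\oee^4 u_1^4$ equals $\eps\int (\text{cross term}) + \tfrac1\eps\int \oee^2(\tfrac12\oee^2+V)(u_1^2+u_2^2) - \tfrac1\eps\int(\tfrac12\oee^4 + V\oee^2)$, and this vanishes precisely because $\oee$ solves its own Euler-Lagrange equation — indeed, the bracketed quantity is the derivative of $G_\eps$ at $\oee$ in the (mass-preserving, since $\int \oee^2 u_j^2$ need not be controlled... ) — here care is needed, so I would instead just integrate the cross term by parts and invoke the pointwise PDE.

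I would therefore structure the proof as: (i) expand all three families of terms under $\eta = \oee u$; (ii) apply the completion-of-square identity to the quartic part; (iii) collect the "constant-in-$u$" remainder into $G_\eps(\oee)$, using an integration by parts on the gradient cross term plus the Euler-Lagrange equation satisfied by $\oee$ to cancel the residual weighted linear term; (iv) read off that what remains is exactly $\widetilde F_\eps(u) + \tfrac{\xi}{2}\int\oee^4 u_1^4$. The main obstacle — really the only subtle point — is step (iii): handling the gradient cross term $2\eps\int\oee\nabla\oee\cdot\nabla(u_1^2+u_2^2)$, which must be integrated by parts and matched against the potential/cubic terms via the profile equation $-2\eps^2\Delta\oee + \oee^3 + V\oee = \mu_\eps\oee$; the Lagrange multiplier $\mu_\eps$ contributes a term $\mu_\eps\int\oee^2(u_1^2+u_2^2)$ which does \emph{not} obviously vanish, so I expect the cleanest statement to be that the identity holds modulo this multiplier term, or — more likely, given how the proposition is phrased — that the authors have in mind the version of $\widetilde F_\eps$ and $G_\eps$ \emph{without} the multiplier because the cross term is genuinely handled by a different manipulation (e.g. writing $\oee^2|\nabla u_j|^2 = |\nabla(\oee u_j)|^2 - |\nabla\oee|^2 u_j^2 - \tfrac12\nabla(\oee^2)\cdot\nabla(u_j^2)$ and noting the last two combine with $\eps^{-1}$ terms to give exactly $\eps^{-1}$ times the Thomas-Fermi density balance). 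I would verify at the end that no mass constraint on $u$ is used, consistent with the statement being a pointwise (in function space) algebraic identity rather than a minimization statement.
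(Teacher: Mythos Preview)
Your plan is the paper's own route (the Lassoued--Mironescu trick): expand $|\nabla(\oee u_j)|^2$, rewrite the quartic via the completion-of-square identity you state, and absorb the cross term $\eps\int\oee\nabla\oee\cdot\nabla(u_1^2+u_2^2)$ using the Euler--Lagrange equation for $\oee$. The algebraic parts (i), (ii), (iv) are fine.

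The genuine gap is in (iii), and specifically in your last sentence. The identity is \emph{not} a constraint-free algebraic identity: it requires $\int\oee^2(u_1^2+u_2^2)=\alpha_1+\alpha_2=\overline{\alpha}=\int\oee^2$. To see this, set $u\equiv 0$: the left side is $F_\eps(0)=0$ while the right side is $G_\eps(\oee)+\tfrac1{2\eps}\int\oee^4>0$. So your proposed final check would fail, and your worry that the multiplier term $\lambda_\eps\int\oee^2(u_1^2+u_2^2)$ ``does not obviously vanish'' is exactly on point --- it does not vanish, and it cannot be avoided.

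The paper resolves this by testing the Euler--Lagrange equation $-\eps\Delta\oee+\eps^{-1}(\oee^3+V\oee)=\lambda_\eps\oee$ twice: once against $\oee(u_1^2+u_2^2)$, which produces $\lambda_\eps\int\oee^2(u_1^2+u_2^2)=\lambda_\eps\overline{\alpha}$ (here the mass constraint on $\eta$ enters), and once against $\oee$ itself, which gives $\lambda_\eps\overline{\alpha}=G_\eps(\oee)+\tfrac1{2\eps}\int\oee^4$. Substituting the second into the first eliminates $\lambda_\eps$ and yields exactly the remainder needed to reconstruct $G_\eps(\oee)$. Your step (iii) should be replaced by this double-testing argument, and the proof should explicitly invoke $\int\eta_1^2+\int\eta_2^2=\overline{\alpha}$.

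Your observation about $u_1^4$ versus $u_2^4$ is a genuine typo in the paper (the section opens with $\tfrac{\xi}{2}\int\eta_2^4$); it is immaterial for the structure of the argument.
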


 \begin{proof}
 The proof follows as in \cite{AftRoyo}. We use the Lassoued-Mironescu trick \cite{LaMi} and write $\eta_1=\ov{\eta}_\eps u_1$, $\eta_2=\ov{\eta}_\eps u_2$ to get 
 \be\label{LassMiro1}
 |\nabla \eta_1|^2+|\nabla \eta_2|^2=(u_1^2+u_2^2)|\nabla \oee|^2 +\oee \nabla \oee \cdot \nabla (u_1^2+u_2^2) +\oee^2 (|\nabla u_1|^2+ |\nabla u_2|^2)
 \ee
 The function $\oee$ solves the Euler-Lagrange equation
 \be\label{ELTF}
 -\eps \Delta \oee +\frac{1}{\eps}(  \oee^3+ V(x) \oee)=\lambda_\eps \oee
 \ee
 where $\lambda_\eps$ is some constant. Multiplying the equation~\eqref{ELTF} by $\oee (u_1^2+u_2^2)$, integrating and using integration by parts we get
 \[\eps \int_{\R^2} (u_1^2+u_2^2)|\nabla \oee|^2 +\oee \nabla \oee\cdot \nabla (u_1^2+u_2^2)=\lambda_\eps \overline{\alpha}-\frac{1}{\eps} \int_{\R^2} \oee^4(u_1^2+u_2^2) +V \oee^2(u_1^2+u_2^2).\]
 On the other hand, multiplying~\eqref{ELTF} by $\oee$ and integrating, we find
 \[\lambda_\eps \overline{\alpha}=G_\eps(\oee)+\frac{1}{2\eps}\int_{\R^2} \oee^4\]
 so that~\eqref{LassMiro1} leads to
 \[\eps \int_{\R^2} |\nabla \eta_1|^2+|\nabla \eta_2|^2=\int_{\R^2} \eps \oee^2 (|\nabla u_1|^2+ |\nabla u_2|^2) +\frac{1}{\eps} \left( \frac{1}{2} \oee^4(1- 2( u_1^2+u_2^2)) -V\oee^2 (u_1^2+u_2^2)\right)+G_\eps(\oee)\]
and therefore
 \begin{align*}
  F_\eps(\eta_1,\eta_2)=\,& G_\eps(\oee)+ \int_{\R^2}  \frac{\xi}{2} \oee^4 u_1^4+ \eps \oee^2 (|\nabla u_1|^2+ |\nabla u_2|^2) \\
  & \qquad +\frac{1}{\eps} \left( \frac{1}{2} \oee^4\left(1- 2( u_1^2+u_2^2) +2u_1^2u_2^2+(u_1^4+u_2)^4\right) +(K-1) \oee^4 u_1^2u_2^2\right)\\
  =\,& G_\eps(\oee)+ \int_{\R^2}  \frac{\xi}{2} \oee^4 u_1^4+ \eps \int_{\R^2} \oee^2 (|\nabla u_1|^2+ |\nabla u_2|^2)\\
  &\qquad +\frac{1}{\eps} \int_{\R^2}\left( \frac{1}{2} \oee^4\left(1- ( u_1^2+u_2^2)\right)^2 +(K-1) \oee^4 u_1^2u_2^2\right)\\
  =\,& G_\eps(\oee)+\wtilde{F}_\eps(u_1,u_2)+\int_{\R^2}  \frac{\xi}{2} \oee^4 u_2^4,
 \end{align*}
which completes the proof.
 \end{proof}

For $E$ a set of locally finite perimeter in $\mathcal{D}$ (see \cite{AmbFusPa,Maggibook}), let
\[\mathcal{F}(E)=  \int_{\partial E} \rhob^{3/2} \qquad \textrm{and} \qquad \mathcal{V}(E)=\int_E \rhob.\]
It is proved in \cite[Th. 1.1]{GolRoyo} (see also \cite{AftRoyo}) that for all $p<\oo$, $\wtilde{F}_\eps$ $L^p-\Gamma$ converges to the functional
\[
\mathcal{G}_0(u)=
\begin{cases} 
\ds \sigma_{K} \mathcal{F}(E)  \quad & \textrm{if } u_1=\chi_E, u_2=\chi_{E^c} \textrm{ and } \mathcal{V}(E)=\alpha_1,\\
                       + \oo & \textrm{otherwise.}
                       \end{cases}
                       \]
    where $\sigma_{K}>0$ is defined by the one dimensional optimal transition problem
\[\sigma_{K}=\inf \left\{ \int_{\R} | \eta'_1|^2 + | \eta'_2|^2+  \frac{1}{2}\left(\eta_1^2+\eta_2^2-1\right)^2 +(K-1) \eta_1^2\eta_2^2
: \lim_{-\infty} \eta_1=0, \ \lim_{+\infty} \eta_1=1\right\}.
\]  Since $ \frac{\xi}{2}\int  \oee^4 u_2^4$ is a continuous perturbation of    $\wtilde{F}_\eps(u_1,u_2)$, we immediately obtain the following result.
     \begin{theorem}
      For every $p<\oo$, the functional $\wtilde{F}_\eps(u_1,u_2)+ \frac{\xi}{2}\int  \oee^4 u_2^4$, $L^p-\Gamma$ converges to 
      \[
      \Gxi(u)=
      \begin{cases} \ds \sigma_{K} \mathcal{F}(E)+\frac\xi2\int_{E^c} \rhob^2  \quad & \textrm{if } u_1=\chi_E, u_2=\chi_{E^c} \textrm{ and } \mathcal{V}(E)=\alpha_1,\\
                       + \oo & \textrm{otherwise.}
       \end{cases}
       \]
     \end{theorem}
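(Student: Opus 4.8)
The plan is to deduce the $\Gamma$-convergence of $\wtilde{F}_\eps(u) + \frac{\xi}{2}\int \oee^4 u_1^4$ from the already-established $\Gamma$-convergence of $\wtilde{F}_\eps$ to $\mathcal{G}_0$ by treating the extra term as a continuous perturbation. First I would record the general principle: if $H_\eps \stackrel{\Gamma}{\to} H$ in some topology $\tau$ and $P_\eps \to P$ continuously in $\tau$ (meaning $u_\eps \to u$ in $\tau$ implies $P_\eps(u_\eps) \to P(u)$), then $H_\eps + P_\eps \stackrel{\Gamma}{\to} H + P$; this is standard (e.g.\ \cite{braidesbook}) and the proof is a one-line check of the $\liminf$ and $\limsup$ inequalities. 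So the real content is to verify that $P_\eps(u) := \frac{\xi}{2}\int_{\R^2} \oee^4 u_1^4$ is continuous along $L^p$-converging sequences, at least along sequences of uniformly bounded $\wtilde{F}_\eps$-energy (which is all that is needed, since sequences with unbounded energy contribute $+\infty$ on both sides).

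The key steps are then: (i) From $\oee^2 \to \rhob$ (strongly, say in $L^\infty_{loc}$ or at least in $L^p$ for the relevant exponents, as recalled from \cite{IgMil,KaSour}), we have $\oee^4 \to \rhob^2$; combined with the uniform $L^\infty$ bound on $\oee$ (hence on $\oee^4$) one controls $\int \oee^4 u_1^4$. (ii) For a sequence $u_\eps = (u_{1,\eps}, u_{2,\eps}) \to u$ in $L^p$ with $\sup_\eps \wtilde{F}_\eps(u_\eps) < \infty$, the compactness part of the result in \cite{GolRoyo} forces $u = (\chi_E, \chi_{E^c})$ with $\mathcal{V}(E) = \alpha_1$, and moreover (by the standard Modica–Mortola/Ambrosio–Tortorelli structure) one can upgrade $L^p$ convergence and use the uniform $L^\infty$ bound $u_{1,\eps} \le C$ coming from the energy, so that $u_{1,\eps}^4 \to \chi_E$ in $L^1$. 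Then $\oee^4 u_{1,\eps}^4 \to \rhob^2 \chi_E$ in $L^1$ by writing $\oee^4 u_{1,\eps}^4 - \rhob^2 \chi_E = (\oee^4 - \rhob^2) u_{1,\eps}^4 + \rhob^2(u_{1,\eps}^4 - \chi_E)$ and bounding each term. Hence $P_\eps(u_\eps) \to \frac{\xi}{2}\int \rhob^2 \chi_E = \frac{\xi}{2}\int_{E^c}\rhob^2$ after noting $\int_{\R^2}\rhob^2 = \int_E \rhob^2 + \int_{E^c}\rhob^2$ is fixed... more simply, the limit is $\frac{\xi}{2}\int_{\R^2}\rhob^2 u_1^2 = \frac{\xi}{2}\int_E \rhob^2$; one should double-check the bookkeeping of which set the integral is over (the statement writes $\int_{E^c}\rhob^2$ because $u_1 = \chi_{E^c}$ in the alternative labeling — I would align the labels with the convention fixed just above the theorem, where $u_1 = \chi_E$, and write $\frac{\xi}{2}\int_E \rhob^2$, or simply adopt whichever convention makes the two theorems consistent). (iii) For the $\limsup$ inequality, take the recovery sequence $u_\eps$ for $\wtilde{F}_\eps \to \mathcal{G}_0$ provided by \cite{GolRoyo}; since that sequence already converges in $L^p$ to $(\chi_E,\chi_{E^c})$ with the uniform bounds above, continuity of $P_\eps$ gives $\limsup (\wtilde{F}_\eps + P_\eps)(u_\eps) \le \mathcal{G}_0(u) + \frac{\xi}{2}\int_E\rhob^2 = \Gxi(u)$, and no new construction is needed.

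The main obstacle — really the only non-routine point — is the continuity in step (ii): one must be sure that along finite-energy sequences the convergence $u_{1,\eps} \to \chi_E$ holds strongly enough (in $L^4$, equivalently, with the uniform $L^\infty$ bound, in $L^1$ of the fourth powers) and not merely weakly, since $P_\eps$ is not weakly continuous. This is exactly the information packaged in the compactness statement underpinning the $\Gamma$-convergence result of \cite{GolRoyo} (finite energy $\Rightarrow$ the pair converges in every $L^p$, $p<\infty$, to a sharp-interface configuration), so I would invoke it explicitly rather than reprove it. I would also make a brief remark that the $L^\infty$ bound $u_{i,\eps}\le C$ — or at least an $L^p$ bound with $p$ large — follows from the mass constraint together with the energy bound (the $\frac{1}{2}\oee^4(1-(u_1^2+u_2^2))^2$ term penalizes large values where $\rhob>0$, and outside $\D$ the constraint handles it); alternatively one truncates the recovery and diagonal sequences without changing the energy in the limit, which is harmless. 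Everything else is the abstract perturbation lemma and elementary estimates, so the write-up stays short.
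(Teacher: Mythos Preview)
Your approach is exactly the paper's: the authors state in one sentence that ``since $\frac{\xi}{2}\int \oee^4 u_1^4$ is a continuous perturbation of $\wtilde{F}_\eps(u_1,u_2)$, we immediately obtain the following result,'' and you have simply unpacked what ``continuous perturbation'' means along finite-energy sequences, invoking the compactness from \cite{GolRoyo} and the convergence $\oee^2\to\rhob$. Your observation about the labeling is on point: the extra term in the original $F_\eps$ is $\frac{\xi}{2}\int\eta_2^4=\frac{\xi}{2}\int\oee^4 u_2^4$, so the appearance of $u_1^4$ in the preceding proposition is a typo in the paper, and the limit $\frac{\xi}{2}\int_{E^c}\rhob^2$ is indeed consistent with $u_2=\chi_{E^c}$.
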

Up to dividing $\Gxi$ by $\sigma_{K}$ and modifying $\xi$, we can assume that $\sigma_{K}=1$. If $ u_1=\chi_E, u_2=\chi_{E^c}$, we will, by a slight abuse of notation denote $\Gxi(u)$ by $\Gxi(E)$. We now want to study the minimizers of $\Gxi$ (whose existence follows from the Direct Method). As in \cite{FigMag}, by making a spherical symmetrization, we can 
restrict the analysis to spherically symmetric sets. For a given set $E$ and a given half line $\ell$ starting from zero, such symmetrization is defined by replacing for every $r>0$ the spherical slice $E\cap \partial B_r$ by the spherical cap 
$K(E,r)$ centered in $\ell\cap \partial B_r$ and such that $\HH^1(K(E,r))=\HH^1(E\cap \partial B_r)$.
\begin{proposition}\label{regulsym}
 For every minimizer $E_\xi$ of $\Gxi$, $\partial E_{\xi} \cap \mathcal{D}$ (recall that $\mathcal{D}$ is the support of $\rhob$) is a $C^{\infty}$ hypersurface. Moreover, for every half line $\ell$ starting from zero, there exists a minimizer $E_\xi$ such that for every $r>0$, $\partial B_r\cap E_{\xi}$ is an arc of circle centered in $\ell$. 
\end{proposition}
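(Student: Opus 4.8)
The plan is to prove Proposition~\ref{regulsym} in two independent parts: first the regularity of $\partial E_\xi\cap\mathcal D$ for an arbitrary minimizer, and then the existence of a spherically symmetric minimizer.

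\textbf{Regularity.} First I would observe that a minimizer $E_\xi$ of $\Gxi$ is a \emph{quasi-minimizer} (or $\Lambda$-minimizer) of the weighted perimeter $\mathcal F(E)=\int_{\partial E}\rhob^{3/2}$ inside $\mathcal D$. Indeed, fix a point $x_0\in\mathcal D$ and a small ball $B_r(x_0)\subset\subset\mathcal D$; since $\rhob$ is smooth and bounded away from zero on compact subsets of $\mathcal D$, the weighted perimeter $\mathcal F$ is comparable to the Euclidean perimeter on $B_r(x_0)$, with Lipschitz (indeed smooth) weight, so classical elliptic regularity theory for weighted/anisotropic-type perimeter functionals applies. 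The only obstruction to a genuine perimeter minimization is the volume constraint $\mathcal V(E)=\alpha_1$ and the lower-order term $\tfrac\xi2\int_{E^c}\rhob^2$. The standard device (as in \cite{Maggibook} or \cite{FigMag}) is a volume-fixing variation: given a competitor $F$ with $F\triangle E\subset\subset B_r(x_0)$, one restores the volume by a small perturbation away from $x_0$, at a cost controlled linearly by $|\mathcal V(F)-\mathcal V(E)|\le |F\triangle E|\le Cr^2$; the term $\tfrac\xi2\int\rhob^2$ likewise changes by at most $C|F\triangle E|$. This yields the $\Lambda$-minimality inequality $\mathcal F(E)\le \mathcal F(F)+\Lambda|F\triangle E|$ for competitors localized in $B_r(x_0)$, with $\Lambda=\Lambda(r,x_0)$. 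By De Giorgi's regularity theory for almost-minimizers of perimeter (in the plane, in fact $\partial E_\xi\cap\mathcal D$ has no singular part for dimensional reasons), $\partial E_\xi\cap\mathcal D$ is a $C^{1,\alpha}$ hypersurface; then a Schauder bootstrap on the Euler--Lagrange equation (prescribed weighted mean curvature equal to a constant Lagrange multiplier plus the smooth term coming from $\rhob$) upgrades this to $C^\infty$.

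\textbf{Spherical symmetrization.} For the second part I would carry out the spherical symmetrization described just before the statement and check that it does not increase $\Gxi$. Write $E^*$ for the symmetrization of $E$ about the fixed half-line $\ell$. Since $\rhob=\rhob(|x|)$ is radial, the volume term is preserved: $\mathcal V(E^*)=\int_0^\infty \rhob(s)\,\h^1(E^*\cap\partial B_s)\,ds=\mathcal V(E)$, and likewise $\int_{(E^*)^c}\rhob^2=\int_{E^c}\rhob^2$, because the one-dimensional measure of each spherical slice is unchanged by construction. For the weighted perimeter, because $\rhob$ is radial, $\mathcal F(E)=\int_{\partial E}\rhob^{3/2}$ decomposes, via the coarea formula in polar coordinates, into a ``vertical'' (radial) contribution and a ``horizontal'' (spherical) one, and the classical spherical-symmetrization inequality for perimeter with a radial weight—each spherical slice becomes a single arc, which is the shortest way to realize a given slice length, and the radial variation of the slice length can only decrease—gives $\mathcal F(E^*)\le\mathcal F(E)$. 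Hence $\Gxi(E^*)\le\Gxi(E)$, so starting from any minimizer and symmetrizing produces a minimizer $E_\xi$ whose every spherical slice $\partial B_r\cap E_\xi$ is an arc of circle centered on $\ell$; combining with the regularity of the first part shows $\partial E_\xi\cap\mathcal D$ is also $C^\infty$.

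\textbf{Main obstacle.} The delicate point is the $\Lambda$-minimality argument: one must be careful that the volume-fixing perturbation can genuinely be performed at uniform cost on a fixed reference ball strictly inside $\mathcal D$ where $\rhob$ is bounded below, and that the resulting $\Lambda$ is locally bounded (it may blow up near $\partial\mathcal D$, which is fine since we only claim interior regularity in $\mathcal D$). A secondary technical care is needed in the symmetrization step to justify the perimeter inequality with the weight $\rhob^{3/2}$ rather than the plain perimeter; this is standard since the weight is radial and positive on $\mathcal D$, but writing the coarea decomposition cleanly and invoking the one-dimensional rearrangement requires some bookkeeping. Everything else—existence of minimizers by the direct method, lower semicontinuity of $\mathcal F$ under $L^1$ convergence, and the Schauder bootstrap—is routine and can be cited from \cite{Maggibook,FigMag,GolRoyo}.
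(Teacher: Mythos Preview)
Your proposal is correct and follows essentially the same approach as the paper's proof: quasi-minimality of $E_\xi$ for the (weighted) perimeter to get $C^{1,\alpha}$ regularity, then a bootstrap for $C^\infty$, and spherical symmetrization (which decreases $\mathcal F$ and leaves $\int_{E^c}\rhob^2$ invariant because $\rhob$ is radial) for the symmetry statement. Your write-up is more explicit about the volume-fixing variation and the coarea decomposition, but the strategy and the references (\cite{FigMag,Maggibook}) are the same.
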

\begin{proof}
 Let $E_\xi$ be a minimizer of $\Gxi$. The regularity of $\partial E_{\xi}$ is a consequence of the regularity theory for minimal surfaces.
 Indeed, since $\rhob$ is locally bounded away from zero in $\mathcal{D}$, any minimizer of $\Gxi$ is locally a quasi-minimizer of the perimeter (see \cite[Th. 3.2]{FigMag} or \cite{Maggibook, DuzStef} for instance). From this, one can infer $C^{1,\alpha}$ regularity of $E_{\xi}$. Since $\rhob$ is smooth in $\mathcal{D}$ further regularity follows.
The symmetry follows as in \cite[Th. 3.2]{FigMag} since spherical symmetrization reduces $\mathcal{F}$ and leaves $\int_{E_\xi^c} \rhob^2$ 
 invariant. \end{proof}

When $\xi$ is small, the perimeter term is dominant. In this case, we are in a situation similar to the one studied in \cite{AftRoyo,GolRoyo}.
\begin{proposition}
 There exists $\alpha_0\in [0,1/2)$ such that for every $\alpha_1\in (\alpha_0,\overline{\alpha}/2-\alpha_0]$, there exists $\xi_0(\alpha_1)$ such that for every $\xi\le \xi_0(\alpha_1)$, 
 the  minimizer of $\Gxi$ is not radially symmetric. 
\end{proposition}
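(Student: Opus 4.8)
The statement to prove is that when $\xi$ is small the minimizer of $\Gxi$ is not radially symmetric, provided $\alpha_1$ stays away from the two extreme values $0$ and $\overline\alpha$. The guiding idea is that as $\xi\to 0$ the functional $\Gxi$ degenerates to the pure weighted-perimeter problem $\sigma_K\mathcal F(E)$ studied in \cite{AftRoyo,GolRoyo}, and for that limiting problem it is already known that the minimizer is \emph{not} the centered ball in a suitable range of volumes. The perturbation $\tfrac{\xi}{2}\int_{E^c}\rhob^2$ is $O(\xi)$ uniformly over all admissible sets (since $0\le\int_{E^c}\rhob^2\le\int_{\mathcal D}\rhob^2<\infty$), so it cannot destroy a \emph{strict} energy gap. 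I would therefore build the proof around three ingredients: (i) an input from \cite{AftRoyo} that the centered ball $B_r$ (with $\mathcal V(B_r)=\alpha_1$) is \emph{unstable} — equivalently, strictly not a minimizer — for the functional $\mathcal F$ when $\alpha_1$ lies in an interval $(\alpha_0,\overline\alpha-\alpha_0)$, with $\alpha_0<1/2$; (ii) a uniform bound on the perturbation term; (iii) a stability/lower-semicontinuity argument to transfer the strict inequality from $\xi=0$ to small $\xi>0$.

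\textbf{Step 1: extract the symmetry-breaking at $\xi=0$.} By \cite[...]{AftRoyo} (the symmetry-breaking result quoted in the introduction), there is $\alpha_0\in[0,1/2)$ such that for $\alpha_1\in(\alpha_0,\overline\alpha-\alpha_0]$ the centered ball is not a minimizer of $E\mapsto\sigma_K\mathcal F(E)$ among sets with $\mathcal V(E)=\alpha_1$; fix a competitor $E_\star$ (spherically symmetric by Proposition~\ref{regulsym}, but not a ball) with
\[
\sigma_K\mathcal F(E_\star)\;=\;\sigma_K\mathcal F(B_r)-2\delta_0
\]
for some $\delta_0=\delta_0(\alpha_1)>0$, where $r$ is fixed by $\mathcal V(B_r)=\alpha_1$.

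\textbf{Step 2: the small perturbation cannot close the gap at the ball.} For any admissible $E$, $0\le\tfrac{\xi}{2}\int_{E^c}\rhob^2\le\tfrac\xi2 M$ with $M:=\int_{\mathcal D}\rhob^2$. Hence $\Gxi(E_\star)\le\sigma_K\mathcal F(B_r)-2\delta_0+\tfrac\xi2 M$ while $\Gxi(B_r)\ge\sigma_K\mathcal F(B_r)$, so for $\xi<\xi_0(\alpha_1):=2\delta_0/M$ we get $\Gxi(E_\star)<\Gxi(B_r)$, i.e.\ the centered ball is not the minimizer. Since minimizers exist (Direct Method) and a symmetric representative exists (Proposition~\ref{regulsym}), it only remains to rule out that \emph{any other} radially symmetric set could be the minimizer. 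Here I would argue by contradiction using a compactness/$\Gamma$-convergence scheme: suppose there is a sequence $\xi_k\to 0$ along which the minimizer $E_k$ is radially symmetric (a centered ball $B_{r_k}$, since the only radially symmetric sets of finite perimeter with $\mathcal V=\alpha_1$ are, modulo null sets, balls $B_{r}$ or their complements — and by the spherical monotonicity of $\rhob$ the complement-type configuration is easily excluded, or directly $r_k\equiv r$). Then $\Gxi[\xi_k](B_r)\le\Gxi[\xi_k](E_\star)\le\sigma_K\mathcal F(B_r)-2\delta_0+\tfrac{\xi_k}2M$, which for $k$ large contradicts $\Gxi[\xi_k](B_r)\ge\sigma_K\mathcal F(B_r)$.

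\textbf{Step 3: identify $\alpha_0$ and conclude.} Collecting Steps 1--2, for each $\alpha_1\in(\alpha_0,\overline\alpha-\alpha_0]$ and each $\xi\le\xi_0(\alpha_1)$ the minimizer of $\Gxi$ is not radially symmetric, which is the claim. \textbf{The main obstacle} is Step 1: one must be sure that the cited result of \cite{AftRoyo} really gives a \emph{strict} (quantitative, or at least positive) energy defect $\delta_0>0$ between the best spherically symmetric non-ball competitor and the ball for the pure functional $\mathcal F$ — a qualitative statement ``the ball is not optimal'' would already suffice for our argument (pick $E_\star$ realizing a strictly smaller value), so the real content to check is that $\mathcal F(B_r)$ is genuinely not the infimum in the stated volume range, and to pin down the precise interval $(\alpha_0,\overline\alpha-\alpha_0)$ of volumes for which this holds. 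The rest (uniform bound on the perturbation, existence of minimizers, reduction to balls among radial sets) is routine.
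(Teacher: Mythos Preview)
Your overall strategy is the same as the paper's: invoke the symmetry-breaking result of \cite{AftRoyo} for the pure weighted-perimeter functional $\mathcal F$ at $\xi=0$, and observe that the volume term is an $O(\xi)$ perturbation that cannot close a strict energy gap. However, you introduce a genuine gap in Step~2. You only compare to the \emph{centered ball} $B_r$, and then try to argue that any radially symmetric minimizer must be a ball by claiming that ``the only radially symmetric sets of finite perimeter with $\mathcal V=\alpha_1$ are, modulo null sets, balls $B_r$ or their complements''. This is false: any annulus $B_{r_2}\setminus B_{r_1}$, or any finite union of concentric annuli, is radially symmetric with finite perimeter. Moreover, since the weight $\rhob^{3/2}$ vanishes at $\partial\mathcal D$, an annulus pushed toward the boundary of $\mathcal D$ may well have smaller weighted perimeter than the ball of the same weighted volume, so it is not at all obvious (and possibly not true) that the ball is the radially symmetric minimizer of $\mathcal F$.

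The fix is immediate and is exactly what the paper does: use the full strength of the result in \cite{AftRoyo}, namely that for $\alpha_1$ in the stated range there exists $E_{\alpha_1}$ with
\[
\mathcal F(E_{\alpha_1})\ <\ \inf\big\{\mathcal F(F)\ :\ \mathcal V(F)=\alpha_1,\ F\text{ radially symmetric}\big\}.
\]
Then, since $\Gxi(F)\ge\mathcal F(F)$ for every $F$, the right-hand side is also a lower bound for $\inf\{\Gxi(F):F\text{ radially symmetric}\}$, and for $\xi$ small enough $\Gxi(E_{\alpha_1})=\mathcal F(E_{\alpha_1})+\tfrac\xi2\int_{E_{\alpha_1}^c}\rhob^2$ stays below that infimum. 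No contradiction argument or identification of the radial minimizer is needed.
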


\begin{proof}
 It is proved in \cite{AftRoyo} that for every such $ \alpha_1$, the minimizer of $\mathcal{F}$ under volume constraint is not radially symmetric and thus there exists $E_{\alpha_1}$ with 
 \[\mathcal{F}(E_{\alpha_1})<\inf_{\mathcal{V}(F)=\alpha_1 }\left\{ \mathcal{F}(F) \ : \ F \textrm{ radially symmetric }\right\}.\]
 but since 
 \[\Gxi(E_{\alpha_1})=\mathcal{F}(E_{\alpha_1})+\xi \int_{E_{\alpha_1}^c} \rhob^2\]
 it is clear that 
 \[\Gxi(E_{\alpha_1})<\inf_{\mathcal{V}(F)=\alpha_1 }\{ \mathcal{F}(F) \ : \ F \textrm{ radially symmetric } \} \ < \ \inf\{ \Gxi(F) \ : \ F \textrm{ radially symmetric } \}\]
 for $\xi$ small enough.
\end{proof}

We now study the situation of large $\xi$. Our main result is a rigidity result stating that for large (but not infinite) $\xi$, the unique minimizer is the centered ball.

\begin{theorem}\label{theostab}
 For every $\alpha_1\in (0,\overline{\alpha})$, there exists $\xi_1(\alpha)$ such that for every $\xi\ge \xi_1(\alpha)$, the unique minimizer of $\Gxi$ is the centered ball $\tilde B$ such that $\mathcal{V}(\tilde B)=\alpha_1$.
\end{theorem}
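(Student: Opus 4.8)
The plan is to reduce to nearly spherical sets and combine the two stability estimates already quoted in the introduction, namely the stability of the ball for the weighted volume functional $E\mapsto\int_{E^c}\rhob^2$ (the ``stability Proposition'') and the control on the instability of the ball for the weighted perimeter $\mathcal F$ (inequality \eqref{stabisoperintro}). Indeed, for $E$ with $\mathcal V(E)=\alpha_1$ and $r$ such that $\mathcal V(B_r)=\alpha_1$, writing $d(E)=\int_{E\Delta B_r}\rhob$, these two propositions give
\[
\Gxi(E)-\Gxi(B_r)\ \geq\ \big(\mathcal F(E)-\mathcal F(B_r)\big)+\xi\Big(\int_{E^c}\rhob^2-\int_{B_r^c}\rhob^2\Big)\ \geq\ -c\,d(E)^2+\xi\,C\,d(E)^2\ =\ (C\xi-c)\,d(E)^2,
\]
which is strictly positive as soon as $\xi>c/C=:\xi_1(\alpha)$ and $d(E)>0$, i.e. as soon as $E\neq B_r$ up to a $\rhob$-null set. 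Since $\Gxi$ attains its minimum (Direct Method) and, by Proposition~\ref{regulsym}, any minimizer has $C^\infty$ boundary inside $\mathcal D$, equality $d(E)=0$ forces $E=B_r$, so the centered ball $\tilde B$ is the unique minimizer. That is the whole argument \emph{provided} both propositions hold; the role of this theorem's proof is essentially to assemble them, so the substance is really in proving \eqref{stabisoperintro}.

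For that proposition, which I expect to be the main obstacle, I would follow the Cicalese--Leonardi strategy referenced in the introduction. First, prove the inequality \eqref{stabisoperintro} directly for \emph{nearly spherical sets}: parametrize $\partial E$ over $\partial B_r$ by $\partial E=\{(1+\psi(\theta))r\,e^{i\theta}\}$ with $\|\psi\|_{C^1}$ small and $\int_E\rhob=\alpha_1$, expand $\mathcal F(E)-\mathcal F(B_r)$ to second order in $\psi$ (Fuglede-type expansion: the leading quadratic form involves $\int|\psi'|^2$ with a favorable sign plus lower-order $\int\psi^2$ terms coming from the weight $\rhob^{3/2}$ and its radial derivative), and compare with $d(E)^2\sim\big(\int\psi\,(\cdot)\big)^2\lesssim\int\psi^2$; since we only need a lower bound of the form $-c\,d(E)^2$, a crude one-sided estimate suffices here — the point is precisely that we are bounding an \emph{instability}, not proving stability, so we can afford to lose the Poincaré-type gain in $\psi'$. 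The volume constraint is used to kill the zero-th Fourier mode of $\psi$ (up to quadratic corrections), which is what makes $\int\psi^2\lesssim d(E)^2$ usable.

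Second, remove the smallness assumption by a contradiction/compactness argument: if \eqref{stabisoperintro} failed for every $c$, take a sequence $E_k$ with $\mathcal F(E_k)-\mathcal F(B_r)< -k\,d(E_k)^2$; one shows the $E_k$ are uniformly bounded and have equibounded perimeter, hence (subsequence) converge in $L^1$ to some $E_\infty$ with $\mathcal V(E_\infty)=\alpha_1$ and $\mathcal F(E_\infty)\le \mathcal F(B_r)$, so by the (plain, unweighted-type) weighted isoperimetric inequality — here one invokes the isoperimetric Lemma $\int_{\partial E}\rhob^{3/2}\gtrsim(\int_E\rhob)^{5/6}$ and a minimality/symmetrization argument as in \cite{FigMag} — $E_\infty=B_r$. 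Then one uses the regularity theory for (almost-)minimizers of the weighted perimeter to upgrade $L^1$ convergence to $C^1$ convergence of the boundaries (the $E_k$ can be taken to be minimizers of $\mathcal F$ under a volume constraint and an $L^1$-penalization of $d$, hence $\Lambda$-minimizers of perimeter with uniform constants, to which density estimates and $\varepsilon$-regularity apply); this puts $E_k$ eventually in the nearly-spherical regime and contradicts the first step. I expect the delicate points to be: getting the \emph{sign} in the second variation right in the presence of the density $\rhob^{3/2}$ (the radial term $\partial_r(\rhob^{3/2})<0$ near $\partial B_r$ is what produces the negative contribution and hence the necessity of the $-c\,d(E)^2$ term rather than a nonnegative bound), and ensuring the $\Lambda$-minimality constants in the compactness step are uniform in $k$ so that the regularity theory applies with $k$-independent bounds.
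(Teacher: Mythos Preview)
Your combination of the volume-stability and perimeter-instability propositions is exactly the paper's argument, and your Fuglede-then-Selection-Principle outline for \eqref{stabisoperintro} is the paper's route as well.

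There is, however, one genuine misstep in your sketch of the contradiction argument. You claim that for the limit $E_\infty$ of the contradiction sequence, ``$\mathcal F(E_\infty)\le\mathcal F(B_r)$ together with the weighted isoperimetric Lemma and a symmetrization argument'' forces $E_\infty=B_r$. This is false: $B_r$ is in general \emph{not} the minimizer of $\mathcal F$ under the volume constraint---that is precisely the symmetry-breaking phenomenon invoked a few lines earlier, and the Remark after Proposition~\ref{propspherical} even computes the parameter range in which the ball is linearly unstable for $\mathcal F$. The correct arguments are different. For the raw contradiction sequence $E_k$ with $\mathcal F(E_k)<\mathcal F(B_r)-k\,d(E_k)^2$, nonnegativity of $\mathcal F$ already gives $d(E_k)^2<\mathcal F(B_r)/k\to0$, so $E_k\to B_r$ with no further input. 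For the \emph{replaced} penalized minimizers $F_k$, the paper proves a separate \emph{linear} calibration inequality $\mathcal F(E)-\mathcal F(B)\ge-\Lambda_1\,d(E)$ (Lemma~\ref{lemweakstab}, via a divergence-of-a-vector-field computation); this is what makes $B$ the unique minimizer of the penalized functional and hence yields $F_k\to B$. Relatedly, the isoperimetric Lemma $\mathcal F(E)\gtrsim\mathcal V(E)^{5/6}$ is used not to identify the limit but inside the $\eps$-regularity statement (Proposition~\ref{epsreg}), to eliminate small satellite components of the $\Lambda$-minimizer lying near $\partial\mathcal D$, where $\rhob$ degenerates and the standard density estimates do not apply. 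Finally, checking that the replaced $F_k$ still violate the inequality with the \emph{same} constant $C_0$ is a separate nontrivial step (Step~3 in the paper), which your sketch elides.
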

In the rest of the section, $\alpha_1$ is fixed. In order to ease notation, we assume that the unit ball is such that $\mathcal{V}(B)=\alpha_1$ (the general case follows by dilation). Theorem \ref{theostab} follows from a combination of two results.
The first one is a stability result for the volume term $\int_{E^c} \rhob^2$:

\begin{proposition}\label{propstab}
There exists $c>0$ such that for every set  $E$ with $\mathcal{V}(E)= \mathcal{V}(B)$,
\be \label{quantvol}
\int_{E^c} \rhob^2-\int_{B^c} \rhob^2\,\ge\, c \left(\int_{E^c\Delta B^c} \rhob \right)^2
\ee
\end{proposition}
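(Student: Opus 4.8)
The key observation is that, since $\mathcal{V}(E)=\mathcal{V}(B)$, the set $E$ and the ball $B$ have the same $\rhob$-mass, so the inequality~\eqref{quantvol} should follow from a purely one-dimensional rearrangement estimate once we reduce the problem to radial functions. First I would replace $E$ by its spherical symmetrization $E^*$ (with respect to an arbitrary fixed half-line): as noted after Proposition~\ref{regulsym}, spherical symmetrization leaves $\int_{E^c}\rhob^2$ invariant (the integrand depends only on $|x|$), and it can only decrease $\int_{E^c\Delta B^c}\rhob$ — actually the cleanest route is to pass instead to the \emph{decreasing} radial rearrangement $E^\sharp$, the sublevel-type set of the form $B_\rho$ obtained by rearranging the radial density $r\mapsto \mathcal{H}^1(E\cap\partial B_r)/(2\pi r)$ decreasingly in $r$. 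Since $\rhob$ is itself radially decreasing, the Hardy–Littlewood inequality gives $\int_{E}\rhob^2\le \int_{E^\sharp}\rhob^2$ (hence $\int_{E^c}\rhob^2\ge\int_{(E^\sharp)^c}\rhob^2$, which is the wrong direction), so in fact I would symmetrize to the \emph{increasing} rearrangement to make $\int_{E^c}\rhob^2$ decrease — but then the mass constraint is preserved and we are comparing against the ball, which is already the minimizer; this shows the left side of~\eqref{quantvol} is nonnegative, and reduces proving the quadratic lower bound to the case where $E=A$ is an annular region $\{a\le |x|\le b\}$ with the same $\rhob$-mass as $B=B_1$. Wait — a single rearrangement does not suffice because the rearranged set need not be close to $B$ in the sense we want; instead the right reduction is: it is enough to prove~\eqref{quantvol} when $E$ is spherically symmetric, and then parametrize by the radial "occupation" function.

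Concretely, writing $f(r)=\mathcal{H}^1(E\cap\partial B_r)\in[0,2\pi r]$ and $f_B(r)=2\pi r\,\chi_{[0,1]}(r)$, the constraint reads $\int_0^\infty f(r)\rhob(r)\,dr=\int_0^\infty f_B(r)\rhob(r)\,dr$ (abusing notation for the radial profile of $\rhob$), the left-hand side of~\eqref{quantvol} is $\int_0^\infty (f_B-f)\rhob^2\,dr$, and the right-hand side is $c\big(\int_0^\infty |f-f_B|\,\rhob\,dr\big)^2$. Set $g=f-f_B$, so $\int g\rhob=0$. Then $\int(f_B-f)\rhob^2 = \int(-g)\rhob^2 = \int(-g)\rhob^2 - \rhob(1)\int g\rhob = \int(-g)\rhob(\rhob-\rhob(1))\,dr$, where $\rhob(1)$ is the value of the radial profile of $\rhob$ at $r=1$, the boundary radius of $B$. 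Now $\rhob$ is \emph{strictly decreasing} in $r$ on $\mathcal{D}$, so $\rhob-\rhob(1)>0$ for $r<1$ and $<0$ for $r>1$; and crucially $g=f-f_B\le 0$ on $[0,1]$ (since $f_B(r)=2\pi r$ is maximal there) while $g\ge 0$ on $(1,\infty)$ (since $f_B=0$ there). Hence $(-g)(\rhob-\rhob(1))\ge 0$ pointwise, confirming nonnegativity, and moreover $\int(-g)\rhob(\rhob-\rhob(1))\,dr \ge c_0\int |g|\cdot|\rhob-\rhob(1)|\,dr$ once we know $\rhob$ is bounded below on the region where $g$ lives.

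\textbf{The main obstacle and how to handle it.} The quadratic (rather than linear) dependence on $\int|g|\rhob$ is forced by the degeneracy $|\rhob-\rhob(1)|\to 0$ as $r\to 1$: near the critical radius, moving mass costs only at second order. To extract the quadratic bound I would split according to the amount of mass $M:=\int|g|\rhob$. Since $\int g\rhob=0$ and $g$ has a fixed sign on each of $[0,1]$ and $[1,\infty)$, we have $\int_0^1(-g)\rhob = \int_1^\infty g\rhob = M/2$. Because $\rhob$ is Lipschitz and strictly decreasing near $r=1$ with $\rhob'(1)<0$ (recall $\rhob=(R^2-r^2)_+$ when $V=|x|^2$, so $\rhob(r)=R^2-r^2$ and $\rhob(r)-\rhob(1)= 1-r^2 \sim 2(1-r)$ near $r=1$), the mass $\int_{1-t}^1(-g)\rhob\le 2\pi\int_{1-t}^1 r\rhob\,dr \le C t$, so to accumulate mass $M/2$ on the left the support of $g$ must reach down to radius $1-cM$ (similarly up to $1+cM$ on the right), on which $|\rhob-\rhob(1)|\gtrsim M$. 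Therefore $\int(-g)\rhob(\rhob-\rhob(1))\,dr \gtrsim M\cdot\big(\text{mass of }g\text{ at distance}\ \gtrsim M\ \text{from }r=1\big)\gtrsim M\cdot(M/2 - CM^2)\gtrsim M^2$ for $M$ small, while for $M$ bounded away from zero the bound $\int|g|\rhob\le 2(\alpha_1+\text{const})$ makes the quadratic estimate trivial after adjusting $c$. The one genuinely delicate point is to make the "mass must reach distance $\gtrsim M$ from the critical radius" argument quantitative and uniform; this is where I expect to spend the most effort, and it uses only that $\rhob$ is strictly decreasing with nonvanishing derivative at $r=1$ (equivalently, $V$ strictly increasing and $C^1$), consistent with the remark that the results generalize to such potentials.
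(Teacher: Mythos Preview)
Your approach is correct and is essentially the paper's argument in a different dress: both subtract the constant $\rhob(1)$ via the mass constraint (the paper writes this as $\int_{E^c}\rhob^2-\int_{B^c}\rhob^2=\int_{E^c\cap B}\rhob(1-|x|^2)+\int_{B^c\cap E}\rhob(|x|^2-1)$), observe the resulting sign structure, and then use a bathtub argument to show the worst case is mass concentrated near $r=1$, which costs quadratically. The paper carries this out more cleanly by directly comparing each of the two terms to the explicit extremal annulus $F_\delta=B\setminus B_{1-\delta}$ (resp.\ $B_{1+\tilde\delta}\setminus B$) of the same $\rhob$-mass $\overline V$ and computing $\int_{F_\delta}\rhob(1-|x|^2)\ge c\,\overline V^{\,2}$; your symmetrization detour and the one-dimensional occupation function $f(r)$ are unnecessary scaffolding (the argument works for arbitrary $E$), and the ``$M/2-CM^2$'' in your final estimate should read ``$M/2-CM$''.
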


\begin{proof}
  Let $E$ be such that $\mathcal{V}(E)= \mathcal{V}(B)$ then 
 \[\int_{E^c} \rhob^2-\int_{B^c} \rhob^2=\int_{E^c\cap B} \rhob (1-|x|^2)+\int_{ B^c\cap E} \rhob (|x|^2- 1)\]
Let 
\[\overline{V}=\int_{E^c\cap B} \rhob \]
 and $\delta$ be such that 
 \[\int_{B\backslash B_{1 -\delta}} \rhob =\overline{V}.\]
 Letting $F_\delta=B\backslash B_{1 -\delta}$, since $\int_{(E^c\cap B)\backslash F_\delta}\rhob=\int_{F_\delta\backslash E^c} \rhob$,  we have
 \[
 \int_{E^c\cap B} \rhob (1-|x|^2) -\int_{F_\delta} \rhob (1-|x|^2)\ge
 \lt(\inf_{(E^c\cap B)\backslash F_\delta} (1-|x|^2) -\sup_{F_\delta \backslash E^c}  (1-|x|^2) \rt)\int_{F_\delta\backslash  E^c} \rhob \ge 0 
 \]
and therefore, $F_\delta$ minimizes $\int_G \rhob (1-|x|^2)$ among sets $G\subset B$ with $\int_G \rhob=\overline{V}$ so that
\begin{equation}\label{estim1}
\int_{E^c\cap B} \rhob (1-|x|^2)\,\ge\, \int_{F_\delta} \rhob(1-|x|^2) \, \ge \, c\overline{V}^2
\end{equation}
 Similarly, letting $F_{\tilde{\delta}}=B_{1+ \tilde{\delta}}\cap B^c$ with $\int_{F_{\tilde \delta}} \rhob =\overline{V}=\int_{ B^c\cap E} \rhob$,
 then $F_{\tilde \delta}$ minimizes $\int_G \rhob(|x|^2-1)$ among $G\subset B^c$ with $\int_G \rhob=\overline{V}$ and thus
 \[\int_{ B^c\cap E} \rhob (|x|^2- 1)\ge \int_{F_{\tilde{\delta}}} \rhob (|x|^2- 1)\ge C \overline{V}^2.\]
 Together with~\eqref{estim1}, this gives~\eqref{quantvol}.
 \end{proof}
The second is an estimate on the possible instability of the ball for $\mathcal{F}$.
\begin{proposition}\label{propestiminstab}
 There exist $\eps>0$  and $C>0$ such that for  every set $E$ with  $\mathcal{V}(E)= \mathcal{V}(B)$  and $\int_{E\Delta B} \rhob\le \eps$
\be\label{stabilitytrue}\mathcal{F}(E)-\mathcal{F}(B)\,\ge\, -C\left(\int_{E\Delta B} \rhob \rt)^2. 
\ee
\end{proposition}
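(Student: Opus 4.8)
\textbf{Proof plan for Proposition~\ref{propestiminstab}.}

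The plan is to follow the classical strategy (going back to Fuglede~\cite{fuglede}, as used in~\cite{CicLeo,AcFuMo,FigMag}) of first proving the estimate for \emph{nearly spherical sets} and then reducing the general case to this one by a contradiction/blow-up argument using the regularity theory for almost-minimal surfaces. Recall that, by Proposition~\ref{regulsym}, a spherical symmetrization leaves $\int_{E^c}\rhob^2$ unchanged and does not increase $\mathcal{F}$, and in fact it will suffice to prove the inequality; so I may carry out the analysis without symmetry but I will use the smoothness of $\rhob$ near $\partial B$ (which holds since $\partial B\subset\D$ and $\rhob$ is bounded away from zero and smooth there).

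\textbf{Step 1 (nearly spherical sets).} Let $E=\{r<1+\psi(\omega)\ :\ \omega\in S^1\}$ with $\|\psi\|_{C^1}$ small and the volume constraint $\mathcal{V}(E)=\mathcal{V}(B)$. Writing $\rhob(r,\omega)=\rhob(1)+\rhob'(1)(r-1)+O((r-1)^2)$ on $\partial B$ (here $\rhob$ and its derivative are evaluated radially at $r=1$), one expands both $\mathcal{F}(E)=\int_{\partial E}\rhob^{3/2}$ and the volume constraint to second order in $\psi$. The first-order term in $\mathcal{F}(E)-\mathcal{F}(B)$ is a linear functional of $\psi$ which, combined with the linearized volume constraint $\int_{S^1}\psi\,\rhob(1)\,d\omega = O(\|\psi\|^2)$, is itself $O(\|\psi\|^2_{L^2})$. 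The second-order term has the Fuglede-type form
\[
\int_{S^1}\bigl[a\,|\psi'|^2 - b\,\psi^2\bigr]\,d\omega + O(\|\psi\|^3_{C^1}),
\]
for constants $a,b>0$ depending on $\rhob(1),\rhob'(1)$ and the space dimension. Unlike in the pure isoperimetric case this quadratic form need \emph{not} be nonnegative — which is exactly why we only claim the lower bound $\ge -C(\int_{E\Delta B}\rhob)^2$ rather than a genuine stability estimate; but it is clearly bounded below by $-C\|\psi\|_{L^2}^2$, and since $\int_{E\Delta B}\rhob \sim \|\psi\|_{L^1}$ and $\|\psi\|_{L^1}^2\lesssim \|\psi\|_{L^2}^2$ does \emph{not} hold, one must be slightly careful: in fact for nearly spherical sets $\|\psi\|_{L^\infty}$ is small so $\|\psi\|_{L^2}^2\lesssim \|\psi\|_{L^\infty}\|\psi\|_{L^1}\lesssim \|\psi\|_{L^1}$, which would only give a linear bound; the correct route is to observe that for nearly spherical sets one has the reverse-type control $\|\psi\|_{L^2}^2\lesssim \bigl(\int_{E\Delta B}\rhob\bigr)$ is too weak, so instead one keeps the estimate as $\mathcal{F}(E)-\mathcal{F}(B)\ge -C\|\psi\|_{L^2}^2$ and passes to Step 2 where the relevant smallness is measured by $\int_{E\Delta B}\rhob$ directly. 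I will therefore record Step 1 in the form: there are $\delta>0$, $C>0$ such that if $E$ is nearly spherical with $\|\psi\|_{C^1}\le\delta$ and $\mathcal{V}(E)=\mathcal{V}(B)$, then $\mathcal{F}(E)-\mathcal{F}(B)\ge -C\bigl(\int_{E\Delta B}\rhob\bigr)^2$.

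\textbf{Step 2 (reduction by regularity).} Suppose the Proposition fails; then there is a sequence $E_k$ with $\mathcal{V}(E_k)=\mathcal{V}(B)$, $\mu_k:=\int_{E_k\Delta B}\rhob\to 0$, and $\mathcal{F}(E_k)-\mathcal{F}(B) < -k\,\mu_k^2$. Without loss of generality (replacing $E_k$ by a suitable almost-minimizer with the same volume and a penalization of the asymmetry, exactly as in~\cite[Prop.~2 or Lemma]{CicLeo} and~\cite{FigMag}), one may assume $E_k$ minimizes $\mathcal{F}(\cdot)+\Lambda\bigl|\mathcal{V}(\cdot)-\mathcal{V}(B)\bigr| + \bigl(\text{term forcing }\int_{E_k\Delta B}\rhob = \mu_k\bigr)$; such $E_k$ are uniform $(\Lambda,r_0)$-almost-minimizers of the weighted perimeter $\int_{\partial\,\cdot}\rhob^{3/2}$, hence (since $\rhob^{3/2}$ is smooth and bounded away from $0$ near $\partial B$) of the Euclidean perimeter. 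Because $\mathcal{F}(E_k)\to\mathcal{F}(B)$ and the ball is the unique minimizer at this volume up to the weight being constant to leading order, $\chi_{E_k}\to\chi_B$ in $L^1$, and the regularity theory for almost-minimal surfaces (Allard / De Giorgi; see~\cite{Maggibook,DuzStef}) gives $C^{1,\alpha}$ and then $C^\infty$ convergence of $\partial E_k$ to $\partial B$. Thus for $k$ large $E_k$ is a nearly spherical set with $\|\psi_k\|_{C^1}\to 0$, and Step 1 applies: $\mathcal{F}(E_k)-\mathcal{F}(B)\ge -C\mu_k^2$, contradicting the choice of $E_k$ for $k>C$. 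This proves the Proposition with the asserted $\eps$ (the threshold below which the almost-minimizing reduction and the $C^1$-closeness are valid) and $C$.

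\textbf{Main obstacle.} The delicate point is Step 2: producing the right penalized minimization problem so that the competitors stay almost-minimizers of the perimeter \emph{with the prescribed value of the asymmetry} $\int_{E\Delta B}\rhob$, and checking that the weight $\rhob^{3/2}$ — which vanishes on $\partial\D$ — causes no trouble because all the relevant sets concentrate near $\partial B\Subset\D$ where $\rhob^{3/2}$ is smooth and uniformly positive; this is exactly the place where the hypothesis $\alpha_1\in(0,\overline\alpha)$ (hence $\partial B\subset\D$, away from $\partial\D$) is used. The Fuglede expansion in Step 1 is routine once the quadratic form is identified, and the sign-indefiniteness of that form is harmless precisely because we only want the one-sided bound~\eqref{stabilitytrue}.
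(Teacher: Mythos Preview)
Your overall strategy matches the paper's, but there is a genuine gap in Step~1 that you yourself half-identify and then paper over.

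\medskip
\textbf{The gap in Step 1.} You correctly arrive at a second-order expansion of the form
\[
\mathcal{F}(E)-\mathcal{F}(B)\ \geq\ \frac12\int_{\partial B} a\,|\psi'|^2 - b\,\psi^2\ +\ o(\|\psi\|_{L^2}^2),
\]
and then you write ``it is clearly bounded below by $-C\|\psi\|_{L^2}^2$''. True, but this is where you throw away exactly the term you need. As you then notice, $-C\|\psi\|_{L^2}^2$ does \emph{not} imply $-C'\|\psi\|_{L^1}^2\sim -C'\bigl(\int_{E\Delta B}\rhob\bigr)^2$: the Cauchy--Schwarz inequality $\|\psi\|_{L^1}^2\lesssim\|\psi\|_{L^2}^2$ goes the wrong way for a lower bound. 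Your sentence ``instead one keeps the estimate as $\mathcal{F}(E)-\mathcal{F}(B)\ge -C\|\psi\|_{L^2}^2$ and passes to Step~2'' does not help, because Step~2 produces nearly spherical sets and then invokes Step~1 in the $L^1$ form, which you have not proved. The missing idea (and this is what the paper does) is to \emph{keep} the positive gradient term and use the elementary interpolation
\[
\int_{\partial B}\psi^2\ \le\ \delta\int_{\partial B}|\psi'|^2\ +\ \Lambda_\delta\Bigl(\int_{\partial B}|\psi|\Bigr)^2,
\]
valid for any $\delta>0$ (write $\psi=(\psi-\bar\psi)+\bar\psi$, control $\|\psi-\bar\psi\|_\infty$ by Sobolev, and use Young). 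Choosing $\delta$ small enough to absorb $b\int\psi^2$ into $\tfrac{a}{2}\int|\psi'|^2$ leaves exactly $-C(\int|\psi|)^2\sim -C(\int_{E\Delta B}\rhob)^2$.

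\medskip
\textbf{Weak points in Step 2.} Your sketch is along the right lines but is missing two ingredients that the paper supplies explicitly. First, to show that the penalized minimizers $F_n$ converge to $B$ you need to know that $B$ is the unique minimizer of $\mathcal{F}(F)+\Lambda\int_{F\Delta B}\rhob$ for some $\Lambda$; your line ``the ball is the unique minimizer at this volume up to the weight being constant to leading order'' is not correct (the ball may well \emph{not} minimize $\mathcal F$ at fixed volume --- that is the whole point of the proposition). The paper proves a one-line calibration lemma: $\mathcal{F}(E)-\mathcal{F}(B)\ge -\Lambda_1\int_{E\Delta B}\rhob$ via a divergence-theorem argument with a vector field equal to $x$ on $\partial B$. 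Second, your claim that ``all the relevant sets concentrate near $\partial B\Subset\mathcal{D}$'' needs justification: a $\Lambda$-minimizer close to $B$ in weighted $L^1$ could in principle have a small component drifting toward $\partial\mathcal{D}$ where $\rhob^{3/2}$ vanishes, and classical $\eps$-regularity does not see it. The paper rules this out via the weighted isoperimetric inequality $\mathcal{F}(E)\ge c\,\mathcal{V}(E)^{5/6}$, which forces any such stray component to have volume bounded below, contradicting closeness to $B$.
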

 Since the proof of Proposition \ref{propestiminstab} is long and involved, we postpone it. Let us show first how Proposition \ref{propstab} and Proposition \ref{propestiminstab} yield together Theorem \ref{theostab}.
 Let $E_\xi$ be a minimizer of $\Gxi$ then  using $B$ as competitor, we obtain thanks to~\eqref{quantvol} and~\eqref{stabilitytrue},
% \[
% \lt(\int_{E_\xi\Delta B} \rhob\rt)^2\,\le\, C \lt(\int_{E_\xi^c}\rhob^2-\int_{B^c} \rhob^2\rt)\,\le\, \frac{C}{\xi} \mathcal{F}(B).
% \]
%From this we infer that for $\xi$ large enough, we can apply Proposition \ref{propestiminstab} to obtain in combination with~\eqref{quantvol} again,
\begin{multline*}
  \lt(\int_{E_\xi\Delta B} \rhob\rt)^2\, \le\, \dfrac1c  \lt(\int_{E_\xi^c}\rhob^2-\int_{B^c} \rhob^2\rt)
  \, =\, \dfrac1{c\xi} \lt( \lt[\mathcal{G}_\xi(E_\xi)-\mathcal{F}(E_\xi)\rt] -  \lt[\mathcal{G}_\xi(B)-\mathcal{F}(B)\rt] \rt)   \\
  \leq\,  \frac{1}{c\xi} \lt(\mathcal{F}(B)-\mathcal{F}(E_\xi)\rt)\,\le\, \frac{C}{c\xi}\lt(\int_{E_\xi\Delta B} \rhob\rt)^2.
 \end{multline*}
This implies $\lt(\int_{E_\xi\Delta B} \rhob\rt)^2=0$ for $\xi> C/c$ and concludes the proof of Theorem \ref{theostab}. \\

Before going into the proof of Proposition \ref{propestiminstab}, let us comment a bit on the statement and give the strategy for proving it. As explained below, in general, we do not expect the ball
to be a local minimizer of $\mathcal{F}$ (and in particular, we cannot expect~\eqref{stabilitytrue} to be true with a plus sign on the right-hand side). However,~\eqref{stabilitytrue} shows that in some sense, the Hessian of $\mathcal{F}$ at the ball is bounded from below. 
The proof of~\eqref{stabilitytrue} follows the strategy of \cite{CicLeo} for proving the quantitative isoperimetric inequality (see also \cite{AcFuMo, FigMag}). Inequality~\eqref{stabilitytrue} is first shown for nearly spherical sets, borrowing ideas from \cite{fuglede}. 
The proof is then finished by arguing by contradiction, constructing a sequence $(E_n)$ converging to the ball and contradicting the inequality. Using a Selection Principle and regularity theory for minimal surfaces, it is possible to replace $(E_n)$ 
by a better sequence $(F_n)$, still contradicting~\eqref{stabilitytrue} but converging in a much stronger way to the ball. Since for $n$ large enough $F_n$ are nearly spherical, we reach a contradiction and Proposition \ref{propestiminstab} is proved.\\

We thus start by proving~\eqref{stabilitytrue} for nearly spherical sets (in the sense of Fuglede \cite{fuglede}).
\begin{proposition}\label{propspherical}
There exist $\eps_0>$  and $C_0>0$ such that if $\eps\le \eps_0$, every set $\partial E=\{ (1+u(x))x \ : \ x\in \partial B\}$ with $\|u\|_{Lip}\le \eps$ and $\mathcal{V}(E)= \mathcal{V}(B)$  satisfies
\[\mathcal{F}(E)-\mathcal{F}(B)\ge -C_0\left(\int_{E\Delta B} \rhob \rt)^2.
\]

\end{proposition}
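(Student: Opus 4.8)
The plan is to reduce everything to a computation in Fuglede-type coordinates and then carry out a careful Taylor expansion of $\mathcal{F}(E)-\mathcal{F}(B)$ to second order in $u$, isolating the quadratic form and showing that it is bounded below by a negative multiple of $\|u\|_{H^{1/2}}^2$ (or of $\left(\int_{E\Delta B}\rhob\right)^2$, which is controlled by the $L^2$ norm of $u$). Write $\partial E=\{(1+u(x))x:x\in\partial B\}$, parametrize $\partial B\simeq\mathbb{S}^1$ by angle $\theta$, and expand $u$ in Fourier series $u=\sum_k a_k e^{ik\theta}$. Since $\rhob=(R^2-|x|^2)_+$ is smooth and bounded away from zero near $\partial B$ (recall $\mathcal{V}(B)=\alpha_1<\overline\alpha$ so $B\subset\subset\mathcal{D}$), both $\mathcal{F}(E)=\int_{\partial E}\rhob^{3/2}$ and the volume term $\mathcal{V}(E)=\int_E\rhob$ are smooth functionals of $u$.

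First I would write out the first variation. The volume constraint $\mathcal{V}(E)=\mathcal{V}(B)$, expanded to second order, fixes the zeroth Fourier mode $a_0$ in terms of $\sum_{k\neq 0}|a_k|^2$ up to higher-order terms; this is the standard Fuglede bookkeeping and it forces $|a_0|\lesssim\|u\|_2^2$. Next, I would expand the weighted perimeter: writing the line element on $\partial E$ in polar coordinates as $\sqrt{(1+u)^2+u_\theta^2}\,d\theta$ times the radial factor, and Taylor-expanding $\rhob^{3/2}\big((1+u)\cdot\big)$ around $|x|=1$, one gets
\[
\mathcal{F}(E)-\mathcal{F}(B)=\ell_1[u]+\tfrac12 Q[u]+O(\|u\|_{Lip}\|u\|_{H^{1/2}}^2),
\]
where $\ell_1$ is the linear term. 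The key point is that since the ball is a critical point of $\mathcal{F}$ \emph{among volume-constrained competitors}, $\ell_1[u]$ vanishes once we substitute the constrained value of $a_0$; concretely $\ell_1$ is a constant times $\int u\,\rhob^{3/2}$-type expression plus $\rhob$-derivative corrections, all proportional to $a_0$, hence $O(\|u\|_2^2)$. What remains is the quadratic form $Q[u]$ acting on the nonzero modes. For the flat isoperimetric problem Fuglede's computation gives $Q_{\text{flat}}[u]=\sum_{k}(k^2-1)|a_k|^2\ge 0$; here the weight $\rhob^{3/2}$ and the extra weighted-area second variation modify this to $Q[u]=\sum_k\big((k^2-1)\,\mathcal{F}(B)\cdot c_1+ c_2\big)|a_k|^2$ for explicit constants $c_1>0$, $c_2\in\mathbb{R}$ coming from $\rhob$ and its first two derivatives at $|x|=1$. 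Because $k^2-1\ge 0$ for all integers $k$ and is $\ge 3$ for $|k|\ge 2$, the only possibly-bad mode is $k=\pm1$, i.e. translations; and for $k=\pm 1$ we get $Q$ restricted to that mode equal to a fixed (possibly negative) constant times $|a_{\pm1}|^2$. Hence $Q[u]\ge -C_0'\big(|a_1|^2+|a_{-1}|^2\big)\ge -C_0'\|u\|_2^2$. Combining with $\ell_1[u]=O(\|u\|_2^2)$, the error term $O(\|u\|_{Lip}\|u\|_{H^{1/2}}^2)\le\eps_0\,O(\|u\|_2^2)$ for $\|u\|_{Lip}\le\eps_0$ small (here one uses that the troublesome modes are low-frequency, so $H^{1/2}$ and $L^2$ norms are comparable on them, while high modes carry a good $+k^2$ sign that absorbs their own error), and finally $\int_{E\Delta B}\rhob\gtrsim\|u\|_1\gtrsim$ ... — actually one wants the reverse: $\|u\|_2^2\lesssim\big(\int_{E\Delta B}\rhob\big)\cdot\|u\|_\infty$ is the wrong direction, so instead I would note $\int_{E\Delta B}\rhob\sim\int_{\mathbb{S}^1}|u|\,\rhob\big|_{\partial B}+O(\|u\|_2^2)\gtrsim\|u\|_1$, and for the \emph{lower} bound on $\mathcal{F}(E)-\mathcal{F}(B)$ we only need an \emph{upper} bound on $\|u\|_2^2$ in terms of $\big(\int_{E\Delta B}\rhob\big)^2$ on the bad modes; since on the span of $k=\pm1$ all norms are equivalent and $\int_{E\Delta B}\rhob\gtrsim|a_1|+|a_{-1}|$, we get $|a_{\pm1}|^2\lesssim\big(\int_{E\Delta B}\rhob\big)^2$, which is exactly what is needed.

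The main obstacle I expect is \emph{handling the linear term and the volume constraint cleanly}: one must verify that the ball really is a constrained critical point of $\mathcal{F}$ (i.e. that the first variation of $\int_{\partial E}\rhob^{3/2}$ equals a Lagrange multiplier times the first variation of $\int_E\rhob$ when $E=B$), which amounts to an ODE identity for $\rhob=(R^2-|x|^2)_+$ at $|x|=1$ — this should hold by radial symmetry but must be checked, and any failure would produce an uncontrolled linear term. The second delicate point is the bookkeeping of error terms: one needs the remainder in the Taylor expansion to be genuinely $o(1)\cdot\|u\|_{H^{1/2}}^2$ uniformly, with the Lipschitz smallness doing the work, and one must ensure the negative contribution is confined to finitely many low Fourier modes so that it can be absorbed into $\big(\int_{E\Delta B}\rhob\big)^2$ rather than the weaker $\|u\|_{H^{1/2}}^2$. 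Once these two points are settled the inequality with the stated constant $C_0$ follows by choosing $\eps_0$ small enough.
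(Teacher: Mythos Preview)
Your overall strategy is correct and closely tracks the paper's proof: expand $\mathcal{F}(E)-\mathcal{F}(B)$ to second order, use the volume constraint to kill the linear term, and control the resulting quadratic form from below. There is, however, one genuine error and one point where the paper's route is cleaner.

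The error: your claim that ``the only possibly-bad mode is $k=\pm 1$'' is false. After substituting the volume constraint, the explicit computation gives (up to the positive factor $(R^2-1)^{3/2}$)
\[
\mathcal{F}(E)-\mathcal{F}(B)\ =\ \frac12\sum_k\left(k^2-\frac{R^2(2+R^2)}{(R^2-1)^2}\right)|a_k|^2 + o(\|u\|_2^2),
\]
and the constant $\frac{R^2(2+R^2)}{(R^2-1)^2}$ is unbounded as $R\downarrow 1$, so the bad set is $\{|k|\le k_0(R)\}$ with $k_0(R)$ depending on $R$ and not equal to $1$ in general. Your own fallback (finitely many bad modes, each Fourier coefficient bounded by $\|u\|_1$, hence $\|u_{\mathrm{low}}\|_2^2\lesssim\|u\|_1^2$) does go through, but you must not hard-wire $k_0=1$. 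The paper avoids mode-counting altogether by proving the elementary interpolation inequality
\[
\int_{\partial B} u^2\ \le\ \delta \int_{\partial B}|\nabla u|^2 + \Lambda_\delta\left(\int_{\partial B}|u|\right)^2
\]
(via the one-dimensional Sobolev embedding $\|u-\bar u\|_\infty\lesssim\|\nabla u\|_2$ and Young's inequality), and then choosing $\delta$ small enough that $C\delta\|\nabla u\|_2^2$ is absorbed by the positive $\frac12\|\nabla u\|_2^2$. This yields $\mathcal{F}(E)-\mathcal{F}(B)\ge -C\|u\|_1^2$ in one stroke and is exactly your Fourier argument in integrated form.

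As for your ``main obstacle'': the ball is automatically a constrained critical point by radial symmetry. The first variations of both $\mathcal{F}$ and $\mathcal{V}$ at $B$ are constant multiples of $\int_{\partial B}u$, and the volume constraint forces $\int_{\partial B}u=O(\|u\|_2^2)$, so the linear term drops to quadratic order with no further identity to verify.
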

\begin{proof}
Recall that $\rhob(x)=(R^2-|x|^2)_+$ with $R>1$. The condition $\int_{E} \rhob=\int_B \rhob$ can be written as
\[
\int_{\partial B} \frac{1}{2}R^2 \lt((1+u)^2-1\rt)-\frac{1}{4} \lt((1+u)^4-1\rt)=0
\]
which leads to 
\be\label{condvol}
\int_{\partial B} u =\frac{1}{2}\frac{3-R^2}{R^2-1} \int_{\partial B} u^2 + o\lt(\|u\|_{L^2(\partial B)}^2\rt).
\ee

We can now compute the energy
\begin{multline*}
 \frac{1}{(R^2-1)^{3/2}}\lt(\mathcal{F}(E)-\mathcal{F}(B)\rt)\,=\,\int_{\partial B} \lt(1-\frac{2u+u^2}{R^2-1}\rt)^{3/2}(1+u)\lt(1+\frac{|\nabla u|^2}{(1+u)^2}\rt)^{1/2}-1\\
 %=\,\int_{\partial B} \lt(1-\frac{3}{2}\frac{2u+u^2}{R^2-1}+\frac{3}{2}\frac{u^2}{(R^2-1)^2}\rt)(1+u)\lt(1+\frac{1}{2}|\nabla u|^2\rt)-1 \ +o\lt(\|u\|_{2}^2\rt)\\
=\,\int_{\partial B}  u \lt(1-\frac{3}{R^2 -1}\rt)+\frac{3}{2} \frac{4-3R^2}{(R^2-1)^2} u^2 +\frac{1}{2}\|\nabla u\|_{L^2(\partial B)}^2 +o\lt(\|u\|_{L^2(\partial B)}^2+\|\nabla u\|_{L^2(\partial B)}^2\rt)
 \end{multline*}
Using~\eqref{condvol}, this turns into
\begin{align} \nonumber\frac{1}{(R^2-1)^{3/2}}\lt(\mathcal{F}(E)-\mathcal{F}(B)\rt)&\,=\,\frac{1}{2}\int_{\partial B} -\frac{R^2(2+R^2)}{(R^2-1)^2} u^2+\|\nabla u\|_{L^2(\partial B)}^2+o(\|u\|_{L^2(\partial B)}^2+\|\nabla u\|_{L^2(\partial B)}^2)\\
\label{avantestimnorm2}
&\,\geq\, \dfrac13\|\nb u\|_{L^2(\partial B)}^2 - C\|u\|_{L^2(\partial B)}^2.
\end{align}

We now claim that for every $\delta>0$, there exists $\Lambda_\delta>0$ such that 
\be\label{estimnorm2}
 \int_{\partial B} u^2\,\leq\, \delta \int_{\partial B} |\nabla u|^2+ \Lambda_\delta \lt(\int_{\partial B} |u|\rt)^2.
\ee
Indeed, denoting by $\bar u=\frac1{2\pi}\int_{\partial B}u$ and using Sobolev embedding and Young inequality, we compute 
\begin{align*}
\int_{\partial B} u^2&=\,\int_{\partial B} (u-\bar u)u + 2\pi (\bar u)^2\, \leq \,\|u-\bar u\|_{L^\infty(\partial B)}  \|u\|_{L^1(\partial B)}+ (2\pi)^{-1} \|u\|_{L^1(\partial B)}^2\\
& \les\, \|\nb u\|_{L^2(\partial B)} \|u\|_{L^1(\partial B)} +  \|u\|_{L^1(\partial B)}^2\\
&\,\leq\, \delta  \|\nb u\|_{L^2(\partial B)}^2 +\Lambda_\delta \|u\|_{L^1(\partial B)}^2.
\end{align*}
This proves~\eqref{estimnorm2}. Using~\eqref{estimnorm2} in~\eqref{avantestimnorm2}, we obtain,
\[
\mathcal{F}(E)-\mathcal{F}(B)\,\geq\, -C\lt(\int_{\partial B} |u|\rt)^2.
\]
%Indeed, by H\"older's inequality,
%\begin{align*}\int_{\partial B} u^2&\le \lt(\int_{\partial B} |u|\rt)^{1/2} \lt(\int_{\partial B} |u|^3\rt)^{1/2}\\
% &\le C \lt( \lt(\int_{\partial B} |u| \rt)^2 + \lt(\int_{\partial B} |u|^3\rt)^{2/3}\rt)\\
% &\le C\lt( \lt(\int_{\partial B} |u| \rt)^2 +\int_{\partial B} |\nabla u|^2\rt)
%\end{align*}
%where in the last line we used Sobolev inequality in the form $\lt(\int_{\partial B} |u|^3\rt)^{2/3}\le C \int_{\partial B} |\nabla u|^2$ (since $u$ has to vanish somewhere because of the volume constraint and since we are in dimension two, i.e. $\partial B$ is of dimension one).
%This proves~\eqref{estimnorm2}. 
Now, since for nearly spherical sets, there holds
\[\int_{E\Delta B} \rhob \simeq |E\Delta B|\simeq \int_{\partial B} |u|,\] 
we can finally conclude the proof.
\end{proof}
% One would hope that 
% \[\int_{\partial B} u^2\le C \lt(\int_{E\Delta B} \rhob\rt)^2 \lt(\simeq \lt[\int_{\partial B} |u|\rt]^2\rt)\]
% for nearly spherical sets but unfortunately, it is the other inequality which holds...
\begin{remark}
 In the proof of Proposition \ref{propspherical}, if we assume that $E$ is centered, that is 
 \[\int_E \rhob(x) x=0,\]
 we could do as in \cite{fuglede,CicLeo}  and decompose $u$ in Fourier series on $\partial B$ to get
 \[u=\sum_k u_k Y_k\]
 with $Y_0=1$ and $Y_1= x\cdot \nu$ for an appropriately chosen $\nu\in \partial B$. So that 
 \[\int_{\partial B} \lt(\frac{R^2}{3}(1+u)^3-\frac{1}{5}(1+u)^5-\left(\frac{R^2}{3}-\frac{1}{5}\right)\right)Y_1= \int_E \rhob(x) x\cdot \nu -  \lt(\frac{R^2}{3}-\frac{1}{5}\rt)\int_{\partial B} x\cdot \nu=0.
 \]
We find thanks to~\eqref{condvol}
 \[u_0=\int_{\partial B} u=O\lt(\|u\|^2_{L^2(\partial B)}\rt) \qquad u_1=\int_{\partial B} u Y_1=O\lt(\|u\|^2_{L^2(\partial B)}\rt).\]
 From this, letting $\mu_k=k^2$ being the $k-$th eigenvalue of the Laplacian on $\partial B$,
 \[
  \int_{\partial B} -\frac{R^2(2+R^2)}{(R^2-1)^2} u^2+\int_{\partial B} |\nabla u|^2=\sum_{k\ge 2} u_k^2 ( \mu_k -\frac{R^2(2+R^2)}{(R^2-1)^2}) +o\lt(\|u\|_{2}^2\rt).
 \]
 Since $\mu_k\ge 4$ for $k\ge 2$, and since 
 \[
 4-\dfrac{R^2(2+R^2)}{(R^2-1)^2}>0\qquad \Longleftrightarrow\qquad \lt[R<\dfrac{1}{\sqrt{3}}(5-\sqrt{13})^{1/2}\ \mbox{ or }\ R> \dfrac{1}{\sqrt{3}}(5+\sqrt{13})^{1/2}\rt],
 \]
 we expect the ball to be unstable for $R\in \lt((5-\sqrt{13})^{1/2}/\sqrt{3},(5+\sqrt{13})^{1/2}/\sqrt{3}\rt) $ and stable otherwise.

\end{remark}

In order to go further, we need an isoperimetric inequality which is, we believe, of independent interest.
\begin{lemma}
  There exists  $c=c(\alpha)>0$ such that for every set $E$ with $\mathcal{V}(E)\leq\alpha/2$, we have
  \be\label{isoper}
  \mathcal{F}(E) \ge c \, \mathcal{V}(E)^{5/6}.\ee
\end{lemma}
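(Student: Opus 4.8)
The plan is to reduce the weighted isoperimetric inequality \eqref{isoper} to the standard Euclidean isoperimetric inequality by comparing $\rhob$ with a positive constant on a suitable region. Recall $\rhob(x) = (R^2 - |x|^2)_+$, so $\rhob$ is comparable to $1$ away from $\partial B_R$ and degenerates linearly near $\partial B_R$. Since we assume $\mathcal{V}(E) \le \alpha/2$, the set $E$ cannot occupy all of $\mathcal{D} = B_R$; in particular there is a fixed radius $R_* < R$ (depending only on $\alpha$) such that $\mathcal{V}(E \cap B_{R_*}) \ge \frac12 \mathcal{V}(E)$ is \emph{not} automatic, so instead I would split into two cases according to how the mass of $E$ distributes.

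First I would treat the case where a definite fraction of the $\rhob$-mass of $E$ lies in a region where $\rhob \ges 1$, say $E' = E \cap B_{R_*}$ with $R_*$ chosen so that $R^2 - R_*^2 \ges 1$; there $\rhob \sim 1$, hence $\mathcal{V}(E') \sim |E'|$ and $\mathcal{F}(E) \ges \HH^1(\partial E' \cap B_{R_*}) \ges \HH^1(\partial^* E') - \HH^1(\partial B_{R_*})$. If $|E'|$ is bounded below by a fixed constant this is immediate since $\mathcal{F}(E) \ges 1 \ges \mathcal{V}(E)^{5/6}$; if $|E'|$ is small, the relative isoperimetric inequality in the ball $B_{R_*}$ gives $\HH^1(\partial^* E' \cap B_{R_*}) \ges |E'|^{1/2} \ges \mathcal{V}(E')^{1/2} \ges \mathcal{V}(E')^{5/6}$, using $\mathcal{V}(E') \le \alpha/2$ bounded. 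Combining, $\mathcal{F}(E) \ges \mathcal{V}(E')^{5/6}$.

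The genuinely new case is when most of the $\rhob$-mass of $E$ concentrates in the thin annular region $\{R^2 - |x|^2 \le t\}$ near $\partial B_R$, where $\rhob$ is small. Here I would use a slicing/coarea argument in the variable $s = R^2 - |x|^2$: write $\mathcal{V}(E) = \int_0^{R^2} s\, g(s)\, ds$ where $g(s)$ is (comparable to) the $\HH^1$-measure of the slice of $E$ at level $s$, and $\mathcal{F}(E) \ges \int_0^{R^2} s^{3/2} \big(|g'(s)| + g(s)\big)\, ds$ after controlling the normal derivative of $s$ (which is $\sim 1$ away from the origin). One then wants the functional inequality
\[
\int_0^{\tau} s^{3/2}\big(|g'| + g\big)\, ds \;\ges\; \Big(\int_0^{\tau} s\, g\, ds\Big)^{5/6},
\]
valid uniformly in $\tau \le R^2$ for nonnegative $g$ with $\int s g \le \alpha/2$. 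I expect this one-dimensional weighted inequality to be the main obstacle: it should follow by a careful optimization — the extremal $g$ is a characteristic function $g = M \chi_{[0,\ell]}$, balancing the mass $\sim M\ell^2$ against the ``perimeter'' cost $\sim M \ell^{5/2}$ (from the $|g'|$ term at the jump) plus $M\ell^{5/2}$ (from the $s^{3/2} g$ term), and eliminating $(M,\ell)$ subject to $M\ell^2 = \mathcal{V}$ with $M$ bounded above by a dimensional constant yields exactly the exponent $5/6$. Making this rigorous for general $g$ (rather than just the extremal shape) via a rearrangement or a direct Hölder/interpolation argument is the technical heart; once it is in place, adding the two cases and absorbing the harmless subtracted term $\HH^1(\partial B_{R_*})$ (which is $\ges 1 \ges \mathcal{V}(E)^{5/6}$ anyway) finishes the proof.
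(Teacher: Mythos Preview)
Your plan for the ``easy'' region (where $\rhob$ is bounded below) is fine and matches the paper's first reduction. The genuine gap is in the boundary--layer case, and it lies precisely in your proposed one--dimensional inequality
\[
\int_0^{\tau} s^{3/2}\big(|g'(s)| + g(s)\big)\,ds \ \ges\ \Big(\int_0^{\tau} s\, g(s)\,ds\Big)^{5/6}.
\]
This inequality is \emph{false}, and the failure corresponds to a geometric loss in your coarea bound. Take $E$ to be a thin radial wedge $\{R-\delta<|x|<R,\ |\theta|<\eps\}$ with $\eps\ll\delta\ll1$. Then $g(s)\sim\eps$ on $[0,\tau]$ with $\tau\sim\delta$, so your left--hand side is $\sim \eps\,\delta^{3/2}$ (jump of size $\eps$ at $s\sim\delta$, plus a negligible $s^{3/2}g$ contribution), while the right--hand side is $\sim(\eps\,\delta^2)^{5/6}=\eps^{5/6}\delta^{5/3}$. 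The required inequality would force $\eps\ges\delta$, which you do not have. (Note also that your claimed jump cost $M\ell^{5/2}$ should be $M\ell^{3/2}$.) The actual $\mathcal{F}(E)$ for this wedge is larger, namely $\sim\delta^{5/2}+\eps\,\delta^{3/2}$, because the two \emph{radial} boundary segments contribute $\int_{R-\delta}^R\rhob^{3/2}\sim\delta^{5/2}$. Your slicing only sees the component of the boundary normal that is transverse to the level sets of $|x|$; the radial parts of $\partial E$ change neither $g$ nor $g'$, so any lower bound expressed purely through $g$ necessarily discards them.

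The paper repairs exactly this defect by two symmetrization steps before passing to a one--dimensional problem. After reducing to connected $E$ contained in the annulus $B_R\setminus B_{R/2}$, it performs a \emph{spherical rearrangement} (replacing each circular slice by an arc centred on a fixed half--line), then maps to the periodic strip $S^1\times(0,1/2)$ with weights $y$ and $y^{3/2}$, and finally \emph{convexifies in the $y$--direction}. After these reductions the set lies between two monotone graphs $y_1(x)\le y\le y_2(x)$, and the weighted perimeter is expressed exactly as $\int y_1^{3/2}\sqrt{1+|y_1'|^2}+\int y_2^{3/2}\sqrt{1+|y_2'|^2}$, which retains both the tangential and the radial boundary contributions. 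The final one--dimensional estimate then splits into the two regimes $y_1(0)\ll y_2(0)$ and $y_1(0)\sim y_2(0)$, and in each case a short computation gives $\mathcal{V}_1\les\mathcal{F}_1^{6/5}$. The moral is that the rearrangement step is not cosmetic: without it you cannot recover the $\delta^{5/2}$ term that the thin--wedge example shows to be essential.
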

\begin{proof}
 Up to a dilation, we assume here that $R=1$ that
 is $\rhob(x)=(1-|x|^2)_+$. Since the estimate is sublinear, it is moreover enough to prove it for connected sets. Let us first show that we can assume that $E\subset (\overline{B}_{1/2})^c$. Indeed, if this is not the case then there exists
 $x\in \partial E \cap B_{1/2}$. Then, there are two possibilities. Either $E\subset B_{3/4}$ and the classical isoperimetric inequality already gives~\eqref{isoper} or there is 
 $y\in \partial E\cap B_{3/4}^c$. Since $\partial E$ is connected it contains a path from $x\in B_{1/2}$ to $y\in B_{3/4}^c$ and $\mathcal{F}(E)$ is bounded from below by $\int_{1/2}^{3/4}(1-s^2)=29/192$.  Again, this gives~\eqref{isoper}.
As in Proposition \ref{regulsym}, we can make a spherical rearrangement and assume further that $E$ is spherically symmetric.\\

We now transform our problem in order to work on a periodic strip. Let $S^1$ be the unit torus and consider the diffeomorphism $ \phi : B_1\backslash B_{1/2}\to S^1\times(0,1/2)$ given by
\[\phi(\theta,r)=\left(\frac{\theta}{2\pi},1-r\right).\]
For $F\subset S^1\times(0,1/2)$, we let
\[\mathcal{V}_1(F)=\int_0^1\int_0^{1/2} \chi_F(x,y) y \ dy dx \qquad \textrm{and} \qquad \mathcal{F}_1(F)= \int_{\partial F} y^{3/2}\]
then, since for $E\subset  B_1\backslash B_{1/2}$,
\[\mathcal{V}(E) \sim \mathcal{V}_1(\phi(E)) \qquad \textrm{and} \qquad \mathcal{F}(E) \sim \mathcal{F}_1(\phi(E)),\]
we are left to prove 
\begin{equation}\label{isoperflat}
 \mathcal{V}_1(F)\les \mathcal{F}_1(F)^{6/5} \qquad \qquad \forall F\subset  S^1\times(0,1/2).
\end{equation}
Notice also that since the sets $E\subset  \overline{B}_1\backslash B_{1/2}$ we started with were spherically symmetric, we can further assume
that for every $\overline{y}\in(0,1/2)$, $F\cap\{y=\overline{y}\}$ is a segment centered in $0\times\{\overline{y}\}$ for instance. We then make a convexification step. We define $\widetilde{F}$ as the smallest set which contains $F$ and which is $y$-convex, that is for very $x\in S^1$ the set $\{y>0\, :\, (x,y)\in \widetilde{F} \}$ is a segment (see Figure~\ref{fig:convex}).
%For every $x\in S^1$, let $F_x=\{ y \, : \, (x,y)\in F\}$ and let $\widetilde{F}_x$ be the convex envelope of $F_x$  
\begin{figure}[ht]
\psfrag{d}{$-1/2$}
\psfrag{e}{$1/2$}
\psfrag{F}{$F$}
\psfrag{x}{$x$}
\psfrag{y}{$y$}
\psfrag{G}{$\widetilde{F}\setminus F$}
\begin{center}
\includegraphics[scale=.7]{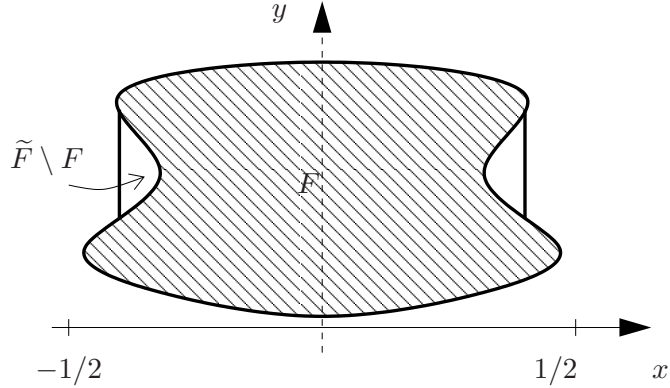}
\end{center}
\caption{\label{fig:convex} Convexification step in the $y$-direction.}
\end{figure}
%For $\widetilde{F}=\bigcup_{x} \{x\}\times \widetilde{F}_x$,
We have
\[\mathcal{V}_1(\widetilde{F})\ge \mathcal{V}_1(F) \qquad \textrm{and} \qquad \mathcal{F}_1(\widetilde{F})\le \mathcal{F}_1(F).\] 
 Indeed, the first inequality comes from $F\subset \widetilde{F}$ and the second from the fact that for every $x\in S^1$, the shortest (weighted) path between the points $(x,y_1)$ and $(x,y_2)$ is the straight segment.
 It is therefore enough to prove~\eqref{isoperflat} for the sets $\widetilde{F}$. After these two symmetrizations, the set $\widetilde{F}$ is contained between two graphs $y_1:(-T,T]\to (0,1/2)$ and $y_2:(-T,T]\to (0,1/2)$ for some $0<T\le 1/2$ i.e.
 \[\widetilde{F}=\{ (x,y) \, : \,  -T< x\le T, \ y_1(x)\le y\le y_2(x) \}.\]
 Moreover, $y_1$ and $y_2$ are even, $y_1$ is non-decreasing and $y_2$ is non-increasing in $[0,T]$.  Using this parameterization, we have
 \[\mathcal{V}_1(\widetilde{F})=\dfrac12\int_{-T}^T  [y_2^2 -y_1^2] \qquad \textrm{and} \qquad \mathcal{F}_1(\widetilde{F})=\int_{-T}^T y_1^{3/2}\sqrt{1+|Dy_1|^2} +\int_{-T}^T y_2^{3/2}\sqrt{1+|Dy_2|^2},\]
 where for a function $y$ of  locally bounded variation,
 \[
  \mathcal{F}_{1}(y\,)=\,\int_{-T}^T y^{3/2}\sqrt{1+|Dy|^2}\, :\,=\int_{-T}^T y^{3/2}\sqrt{1+|y'|^2}+\sum_{x\in J_y} |y(x^+)^{5/2}-y(x^-)^{5/2}| +\int_{-T}^T  y^{3/2} d|D^c y|,
 \]
see \cite[Th. 5.54]{AmbFusPa}.
 Notice that we can assume that $y_1(\pm T)=y_2(\pm T)$. Indeed,  if $T=1/2$, then $\mathcal{F}_1(\widetilde{F})\ges y_2(1/2)^{3/2}$. In this case, we can add a vertical cut between $y_2(1/2)$ and $y_1(1/2)$. The additional contribution to the weighted perimeter is of order of $y_2(1/2)^{5/2}-y_1(1/2)^{5/2}$ which is controlled by $\mathcal{F}_1(\widetilde{F})$. Eventually, since $\mathcal{F}_1$ is the relaxed functional of its restriction to $C^1$ paths with respect to $L^1$
 convergence \cite[Th. 5.54]{AmbFusPa}, we can further assume that $y_1$ and $y_2$ are smooth.
 After, these symmetrization and regularization steps, we can write,
 \be\label{F1}\mathcal{F}_1(\widetilde{F})\sim \int_{-T}^T y_2^{3/2}+\int_{-T}^T y_1^{3/2}+ y_2(0)^{5/2} -y_1(0)^{5/2}.\ee
 We then consider two cases. First, if $y_1(0)\ll y_2(0)$,  then~\eqref{F1} yields
 \[\mathcal{F}_1(\widetilde{F})\ges \int_{-T}^T y_2^{3/2}+ y_2(0)^{5/2}\]
 from which we deduce
 \[\mathcal{V}_1(\widetilde{F})\ges \int_{-T}^T y_2^2\le y_2(0)^{1/2}\int_{-T}^T y_2^{3/2}\les \mathcal{F}_1(\widetilde{F})^{1/5} \mathcal{F}_1(\widetilde{F})\les \mathcal{F}_1(\widetilde{F})^{6/5}.\]
 Then, in the case $y_1(0)\sim y_2(0)$, by convexity $y_2(0)^{5/2} -y_1(0)^{5/2}\ge \frac{5}{2} y_1(0)^{3/2} (y_2(0)-y_1(0))$ so that~\eqref{F1} becomes
 \[\mathcal{F}_1(\widetilde{F})\ges T y_2^{3/2}(0) + y_2(0)^{3/2} (y_2(0)-y_1(0)).\]
 We infer that
 \begin{multline*}
  \mathcal{V}_1(\widetilde{F})\,=\,\int_{-T}^T (y_2-y_1)(y_2+y_1)
  \, \leq\, 4T y_2(0) (y_2(0)-y_1(0))\\
   \le\, 4T y_2(0) y_2(0)^{4/5} (y_2(0)-y_1(0))^{1/5}\, =\, 4T y_2(0)^{3/2} \left( y_2(0)^{3/2} (y_2(0)-y_1(0))\right)^{1/5}\\
  \,\les\, \mathcal{F}_1(\widetilde{F})\mathcal{F}_1(\widetilde{F})^{1/5}\les \mathcal{F}_1(\widetilde{F})^{6/5},
 \end{multline*}
which concludes the proof of~\eqref{isoperflat}.

 \end{proof} 
 \begin{remark}
  The exponent $5/6$ in~\eqref{isoper} can be easily seen to be optimal by considering as competitor a small ball touching the boundary of $B$. 
 \end{remark}

%\begin{remark}
% In the previous proof, instead of the convexification step, one could think of replacing the vertical slices $E_x$ by $(0,y(x))$ where $y(x)$ is such that $\int_{E_x} y= \int_0^{y(x)} y$ (so that $\mathcal{V}$ would be unchanged). This would transform $E$ into a subgraph. 
% Unfortunately, it seems that this symmetrization does not always reduces the perimeter. This seems to indicate that the minimizer of $\mathcal{F}_1$ among sets of fixed volume $\mathcal{V}_1$  does not touch the boundary $S^1\times \{0\}$.  
%\end{remark}

We can now prove the following $\eps$-regularity result:
\begin{proposition}\label{epsreg}
 Let $\Lambda>0$. Then, there exists $\eps_1>0$ such that if $E$ is a $\Lambda$-minimizer of $\mathcal{F}$, i.e. for every $G$,
 \[ \mathcal{F}(E)\le \mathcal{F}(G)+\Lambda \int_{E\Delta G} \rhob\]
 and if $\int_{E\Delta B} \rhob \le \eps_1$ then $E$ is nearly spherical i.e. $\partial E=\{ (1+u(x)) x \ : \ x\in \partial B\}$ and $\|u\|_{C^{1,\alpha}}\le \eps_0$ (where $\eps_0$ is the one defined in Proposition \ref{propspherical}). 
\end{proposition}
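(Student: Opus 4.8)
The plan is to prove this $\eps$-regularity statement via a compactness/contradiction argument combined with the regularity theory for $\Lambda$-minimizers (almost-minimizers) of the weighted perimeter $\mathcal{F}$. Suppose the statement fails: then there is $\Lambda>0$, a sequence $E_n$ of $\Lambda$-minimizers of $\mathcal{F}$ with $\int_{E_n\Delta B}\rhob\to 0$, yet each $E_n$ fails to be nearly spherical in the sense that it is not the normal graph over $\partial B$ of a function $u_n$ with $\|u_n\|_{C^{1,\alpha}}\le\eps_0$. I would first observe that $\int_{E_n\Delta B}\rhob\to 0$ forces $|E_n\Delta B|\to 0$ (since $\rhob$ is bounded below away from zero on compact subsets of $\mathcal{D}$, and a small-volume piece near $\partial\mathcal{D}$ is cheap), hence $\chi_{E_n}\to\chi_B$ in $L^1$. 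Because $B\Subset\mathcal{D}$ and $\rhob$ is smooth and bounded away from zero in a neighborhood of $\ov B$, the $\Lambda$-minimality of $E_n$ for $\mathcal{F}$ translates, in that fixed neighborhood, into $E_n$ being a $(\Lambda',r_0)$-minimizer of the ordinary perimeter for uniform constants $\Lambda',r_0$ (compare $\mathcal{F}(E_n)$ with $\mathcal{F}(G)$ for competitors $G$ differing from $E_n$ only inside that neighborhood, and use $c\le\rhob\le C$ there together with a bound on $|\nabla\rhob|$ to absorb the gradient into the volume term). This is exactly the setting of \cite[Th. 3.2]{FigMag} (see also \cite{Maggibook, DuzStef}).

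Next I would invoke the standard regularity and compactness theory for almost-minimizers of the perimeter: a sequence of uniform $(\Lambda',r_0)$-minimizers converging in $L^1$ to a set $F$ which is itself a $(\Lambda',r_0)$-minimizer, and moreover the convergence is improved to Hausdorff convergence of the boundaries $\partial E_n\to\partial F$ inside the neighborhood, together with $C^{1,\beta}$ convergence on any compact piece where $\partial F$ is regular, for every $\beta<\alpha$. Here the limit is $\partial F=\partial B$, which is smooth; $B$ being a $(\Lambda',r_0)$-minimizer in a neighborhood of $\ov B$ is immediate (it is even a true local minimizer of $\mathcal F$ restricted appropriately, but one only needs the regularity of $\partial B$). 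Hence for $n$ large, $\partial E_n$ lies in a tubular neighborhood of $\partial B$ on which it can be written as a normal graph $\{(1+u_n(x))x : x\in\partial B\}$, and $\|u_n\|_{C^{1,\beta}(\partial B)}\to 0$. Fixing $\beta>\alpha$ in the threshold (i.e. choosing the improved-convergence exponent above $\alpha$, which is allowed since $E_n$ are $C^{1,\beta}$-regular for all $\beta<1$ by De Giorgi–type regularity as $\rhob$ is smooth), we get $\|u_n\|_{C^{1,\alpha}(\partial B)}\le\eps_0$ for $n$ large. This contradicts the assumption that $E_n$ is not nearly spherical, proving the proposition with $\eps_1$ the corresponding threshold.

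The main obstacle — and the point deserving the most care in the write-up — is the translation of $\mathcal{F}$-almost-minimality into classical perimeter almost-minimality with \emph{uniform} constants, and pinning down precisely why the convergence $\partial E_n\to\partial B$ upgrades from $L^1$ to $C^{1,\alpha}$. Concretely one must: (i) verify that competitors in the $\Lambda$-minimality inequality may be taken to coincide with $E_n$ outside a fixed neighborhood $U\Subset\mathcal D$ of $\ov B$, so only the weight on $U$ matters; (ii) use $0<c_U\le\rhob\le C_U<\infty$ and $|\nabla\rhob|\le C_U$ on $U$ to show $\mathrm{Per}(E_n;U)\le\mathrm{Per}(G;U)+C|E_n\Delta G|$ for all such $G$ with $\mathrm{diam}(E_n\Delta G)\le r_0$; (iii) apply the $\eps$-regularity theorem for almost-minimal surfaces to get $C^{1,\beta}$ excess-decay near points of $\partial E_n$ close to $\partial B$, uniformly in $n$, and combine with the $L^1$ convergence (which controls the flatness, i.e. the excess, via a standard argument once one knows $\partial E_n$ is Hausdorff-close to $\partial B$). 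All of this is by now classical and can be cited from \cite{Maggibook, DuzStef, FigMag}; the write-up should do the citing carefully rather than reprove it. I would also note, for later use in the selection-principle argument, that the same proof shows the neighborhood and constants depend only on $\Lambda$, $\alpha$, and $\rhob$, not on the particular minimizer.
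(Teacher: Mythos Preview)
Your compactness/regularity argument handles the \emph{interior} part correctly, but there is a genuine gap: you only establish Hausdorff and $C^{1,\alpha}$ convergence of $\partial E_n$ to $\partial B$ \emph{inside a fixed neighborhood $U\Subset\mathcal D$}, yet you then assert that all of $\partial E_n$ lies in a tubular neighborhood of $\partial B$. This does not follow. Since the weight $\rhob$ degenerates at $\partial\mathcal D$, the $\Lambda$-minimality of $E_n$ for $\mathcal F$ does \emph{not} translate into uniform $(\Lambda',r_0)$-minimality of the ordinary perimeter near $\partial\mathcal D$, so the standard density estimates and Hausdorff-convergence results do not apply there. Nothing you have written prevents $E_n$ from having additional small components drifting toward $\partial\mathcal D$: these can have arbitrarily small $\rhob$-volume (hence be consistent with $\int_{E_n\Delta B}\rhob\to 0$) while still making $E_n$ fail to be nearly spherical.

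This is precisely the point where the paper invokes the weighted isoperimetric inequality $\mathcal F(F)\ge c\,\mathcal V(F)^{5/6}$ (the lemma immediately preceding this proposition). After the interior regularity step one writes $E=E_1\cup E_2$ with $E_1$ nearly spherical and $E_2\subset B_{1+\delta}^c$; testing the $\Lambda$-minimality against $E_1$ gives $\mathcal F(E_2)\le\Lambda\,\mathcal V(E_2)$, which combined with the isoperimetric inequality forces $\mathcal V(E_2)\ge (c/\Lambda)^6$ unless $E_2=\varnothing$. For $\eps_1$ small this is impossible, so $E_2=\varnothing$. Your write-up must include this (or an equivalent) argument; the classical almost-minimizer theory alone cannot see the boundary region where $\rhob\to 0$.
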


\begin{proof}
 Fix $\delta>0$ then inside $B_{1+\delta}$ by classical $\eps$-regularity results for quasi-minimizers of the perimeter (see \cite[Th. 6.1]{DuzStef} or \cite[Th. II.6.3]{Maggibook} for instance as well as \cite[Lem. 3.6]{CicLeo}), if $\int_{(E\cap B_{1+\delta})\Delta B} \rhob \le \eps_1$ then $\partial E\cap B_{1+\delta}$ is a small $C^{1,\alpha}$ perturbation of $\partial B$ in particular, if 
 $\int_{E\Delta B} \rhob \le \eps_1$, $\partial E\cap B_{1+\delta}\subset B_{1+\frac{\delta}{2}}$ and $E$ can be written as $E=E_1\cup E_2$ where $E_1$ is nearly spherical, $E_2\subset B_{1+\delta}^c$. By testing the $\Lambda-$minimality of $E$ against $E_1$, 
 we find by~\eqref{isoper}
 \[c \mathcal{V}(E_2)^{5/6}\le  \mathcal{F}(E_2) \le \Lambda \mathcal{V}(E_2)\]
 and thus if $\mathcal{V}(E_2)\neq0$,
  \[ \left(\frac{c}{\Lambda}\right)^{6} \le \mathcal{V}(E_2)\le \eps_1\]
 which is absurd for $\eps_1$ small enough.
 \end{proof}
 
 We will also need the following simple lemma which is a weak version of~\eqref{stabilitytrue}.
 \begin{lemma}\label{lemweakstab}
  There exists $\Lambda_1>0$ such that for every set $E$, 
  \be\label{weakstab}
 \F(E)-\F(B)\ge - \Lambda_1 \int_{E\Delta B} \rhob.
  \ee
 \end{lemma}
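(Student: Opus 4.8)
The plan is to establish \eqref{weakstab} by a calibration argument. If $\mathcal{F}(E)=+\infty$ the inequality is trivial, so assume $\mathcal{F}(E)<\infty$; since $\rhob$ is bounded below by a positive constant on every compact subset of $\mathcal{D}=B_R$, the set $E$ then has locally finite perimeter in $\mathcal{D}$, and recall that here $B=B_1$ with $1<R$ and $\rhob(x)=R^2-|x|^2$ on $B_R$. The whole proof reduces to constructing a Lipschitz vector field $T\colon\R^2\to\R^2$, compactly supported in $\mathcal{D}$, with the three properties that (a) $|T(x)|\le\rhob(x)^{3/2}$ for every $x$; (b) $T=\rhob^{3/2}\,x/|x|$ on $\partial B$, so that $T\cdot\nu_B=\rhob^{3/2}$ there; and (c) $|\Div T|\le\Lambda_1\,\rhob$ a.e., for some constant $\Lambda_1=\Lambda_1(\alpha)>0$. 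Granting such a $T$, the lemma follows at once from the Gauss--Green formula: for any set $F$ with $\mathcal{F}(F)<\infty$, property (a) gives $T\cdot\nu_F\le|T|\le\rhob^{3/2}$ on $\partial^*F$, hence $\int_F\Div T=\int_{\partial^*F}T\cdot\nu_F\,d\mathcal{H}^1\le\mathcal{F}(F)$; while property (b) turns this into an equality for $F=B$, namely $\int_B\Div T=\int_{\partial B}\rhob^{3/2}=\mathcal{F}(B)$. Subtracting and using (c),
\[\mathcal{F}(E)-\mathcal{F}(B)\ \ge\ \int_E\Div T-\int_B\Div T\ =\ \int_{\R^2}(\chi_E-\chi_B)\,\Div T\ \ge\ -\int_{E\Delta B}|\Div T|\ \ge\ -\Lambda_1\int_{E\Delta B}\rhob,\]
which is exactly \eqref{weakstab}.

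To build $T$ I look for it radial, $T(x)=g(|x|)\,x/|x|$ with $g\colon[0,R]\to\R$. Then $\Div T=g'(r)+g(r)/r$, and the three requirements become: $g(1)=(R^2-1)^{3/2}$; $|g(r)|\le(R^2-r^2)^{3/2}$ on $[0,R]$ (in particular $g(R)=0$, since the weight bounding $|T|$ degenerates on $\partial\mathcal{D}$); and $|g'+g/r|\le\Lambda_1(R^2-r^2)$ on $(0,R)$. A convenient choice is $g=\rhob^{3/2}\psi$ where $\psi\colon[0,R]\to[0,1]$ is smooth, equal to $1$ on a small neighbourhood of $r=1$, vanishing at $0$ (so that $T$ extends Lipschitz-continuously to the origin), and identically $0$ on $[r_2,R]$ for some $r_2\in(1,R)$. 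Conditions (a) and (b) are immediate from $0\le\psi\le 1$ and $\psi(1)=1$. For (c) one computes, on the interval where $\psi\equiv1$, $g'+g/r=\Div(\rhob^{3/2}\,x/|x|)=\tfrac{(R^2-r^2)^{1/2}}{r}(R^2-4r^2)$, which is bounded there while $\rhob$ is bounded below there, hence $\le\Lambda_1\rhob$; on the remaining compact subintervals $g$ and $g'$ are bounded and $\rhob$ is bounded below (or else $g\equiv0$), so the bound holds throughout.

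The main obstacle --- and the reason the argument yields only the linear, sign-indefinite estimate \eqref{weakstab} rather than a one-sided stability inequality of the type \eqref{stabilitytrue} with a plus sign --- is precisely the behaviour of the calibration near $\partial\mathcal{D}$. If one could take $\Div T=\lambda\rhob$ with $\lambda$ constant, the computation above would give $\mathcal{F}(E)-\mathcal{F}(B)\ge\lambda(\mathcal{V}(E)-\mathcal{V}(B))$, i.e.\ genuine minimality of $B$ under the volume constraint; but solving $g'+g/r=\lambda(R^2-r^2)$ with $g$ bounded at the origin forces $g(r)=\tfrac{\lambda r}{4}(2R^2-r^2)$, which is nonzero at $r=R$ and hence violates $|g|\le(R^2-r^2)^{3/2}$. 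Thus $T$ must be brought back to $0$ on $\partial\mathcal{D}$, and the content of the construction is that this can be done over a set on which $\rhob$ stays bounded below, so $\Div T$ picks up only an error of order $\rhob$. The remaining ingredients --- the Gauss--Green formula for sets of finite perimeter and the elementary pointwise computations --- are routine.
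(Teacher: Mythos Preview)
Your proof is correct and follows essentially the same calibration approach as the paper: the paper takes $T=\rhob^{3/2}v$ with $v$ a compactly supported vector field in $B_R$ satisfying $|v|\le 1$, $v=x$ on $\partial B$, and $\|\Div v\|_\infty\le C$, then observes $\Div T=\rhob\,w$ with $w$ bounded and concludes exactly as you do via Gauss--Green. Your radial choice $v=\psi(|x|)\,x/|x|$ is simply an explicit instance of the paper's $v$, and your verification of conditions (a)--(c) matches the paper's computation that $\|w\|_\infty<\infty$.
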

\begin{proof}
 Let $v$ be a vector field with $\supp v\subset B_R$, $|v|\le 1$, $v=x$ on $\partial B$ and $\|\Div v\|_\infty\le C$ and let 
 \[w=\frac{3}{2}\rhob^{-1/2} \nabla \rhob \cdot v +\rhob^{1/2} \Div v,\]
 so that $\Div (\rhob^{3/2} v)=\rhob w$. Then, for every set $E$, denoting by $\nu^E$ the outward normal to $E$,
 \begin{align*}
  \F(E)-\F(B)&\ge \int_{\partial E} \rhob^{3/2} v\cdot\nu^E -\int_{\partial B}\rhob^{3/2} v\cdot\nu^B\\
  &=\int_{E} \rhob w-\int_B \rhob w\ge -\|w\|_\infty \int_{E\Delta B} \rhob.
 \end{align*}

\end{proof}

We can finally prove~\eqref{stabilitytrue}.

% \begin{proposition}
%  There exists $\eps_2>0$  and $C_0>0$ such that  every set $E$ with  $\int_E \rhob=\int_B \rhob$ and $\int_{E\Delta B} \rhob\le \eps_2$
% \[\mathcal{F}(E)-\mathcal{F}(B)\ge -2C_0\left(\int_{E\Delta B} \rhob \rt)^2 
% \]
% \end{proposition}
\begin{proof}
 We argue by contradiction. Assume that there exists a sequence of measurable sets $(E_n)$ with $\int_{E_n\Delta B} \rhob \to 0$ and 
 \be\label{hypabsurd}\mathcal{F}(E_n) +2C_0\left(\int_{E_n\Delta B} \rhob \rt)^2\le \mathcal{F}(B),   
 \ee
 where $C_0$ is the constant given by Proposition \ref{propspherical}.
Let $\eps_n=\int_{E_n\Delta B} \rhob$ and for $\Lambda_1>0$ given by Lemma \ref{lemweakstab} and $\Lambda_2>0$, let $F_n$ be a minimizer of 
\be\label{minLambda0}
\F(F)+2\Lambda_1 \lt( \lt[\eps_n-\int_{F\Delta B} \rhob \rt]^2+\eps_n\rt)^{1/2} +\Lambda_2 |\mathcal{V}(F)- \mathcal{V}(B)|.
\ee

{\it Step 1.}  We claim that if $\Lambda_2$ is large enough, $\mathcal{V}(F_n)= \mathcal{V}(B)$ for all $n$.\\
To prove this, we follow the approach of \cite{EspFus} (see also \cite{GolNov} for another approach). Assume by contradiction that the claim does not hold, then there exist sequences of positive numbers $\eps'_k\to 0$ and $\Lambda_{2,k}\to +\infty$, and a sequence of measurable sets $G_k\subset \R^2$ such that $G_k$ minimizes 
\be\label{minLambda}\F(G)+2\Lambda_1 \lt( \lt[\eps_k'-\int_{G\Delta B} \rhob \rt]^2+\eps_k'\rt)^{1/2} +\Lambda_{2,k} |\mathcal{V}(G) -\mathcal{V}(B)|\ee
and for instance $\mathcal{V}(G_k) < \mathcal{V}(B)$ (the other case is similar). In order to get a contradiction, we build  a new sequence $\wtilde G_k$ such that $\mathcal{V}(\widetilde G_k)=\mathcal{V}(B)$ and 
\[\F(\widetilde G_k) +2\Lambda_1 \lt( \lt[\eps_k'-\int_{\widetilde G_k\Delta B} \rhob \rt]^2+\eps_k'\rt)^{1/2}\,\les \,|\mathcal{V}(G_k)-\mathcal{V}(B)|.\]
The construction is a bit delicate since we want all the constants to be uniform in $\eps'_k$. If this were not the case, one could have simply used Almgren's construction (see \cite{Maggibook}).\\
First, testing the energy~\eqref{minLambda} with $B$, we find that  $\F(G_k)\le C$ and therefore, up to extraction, $G_k$ converges in $L^1_{loc}(\mathcal{D})$ to some  $ G_\infty$. 
Moreover, since $|\mathcal{V}(G_k)-\mathcal{V}(B)|\le \frac{C}{\Lambda_{2,k}}$,   $G_\infty$ satisfies $\mathcal{V}(G_{\infty})= \mathcal{V}(B)$. The set $G_\infty$, minimizes
\[\F(G)+2\Lambda_1\int_{G\Delta B} \rhob\]
under the constraint $\mathcal{V} (G)= \mathcal{V}(B)$ (this can be seen for instance by a $\Gamma-$convergence argument). Therefore, by~\eqref{weakstab}, $G_\infty=B$. Notice that arguing as in Proposition \ref{regulsym},
we get that the sets $G_k\cap \mathcal{D}$ are $C^{1,\alpha}$. 
Now, let us fix $\delta>0$ and $r\ll 1$, let us choose $\tilde x_0\in \partial B$ and let us set $x_0=(1+\frac{r}{2})\tilde{x}_0$ and $\bar{C}=(1/2)\int_{B\cap B_r(x_0)} \rhob$. Then since $B_{r/2}(x_0)\cap B=\void$, there hold
\be\label{densestim}\int_{G_k\cap B_{r/2}(x_0)} \rhob\le \delta  \qquad \textrm{and } \qquad \int_{G_k\cap B_{r}(x_0)} \rhob>\bar{C} \ee
for $k$ large enough. Let $0<\sigma_k<1/2$ be a sequence to be fixed later and consider the bilipschitz maps:
\[\Phi_k(x_0+x)-x_0=\begin{cases} (1-3\sigma_k) x  & \textrm{if } |x|\le \frac{r}{2},\\
                 x+\sigma_k (1-\frac{r^2}{|x|^2})x & \textrm{if } \frac{r}{2}\le |x|<r,\\
                 x 	& \textrm{if } |x|\ge r,
                \end{cases}\]
and let $\wtilde{G}_k=\Phi_k(E_k)$.\\
{\it Step 1.1.} We first prove that 
\be\label{estimtildeFn1}\F(G_k)-\F(\wtilde G_k)\ge -C \sigma_k \F(G_k).\ee

Following the notation of \cite{EspFus}, we let  for $x\in \partial G_k$, $T_{k,x}(\tau)=\nabla \Phi_k(x)\circ \tau$ for $\tau \in \pi_{k,x}$ (where $\pi_{k,x}$ is the
 tangent space to $\partial G_k$ at $x$) and
\[J_1T_{k,x}=\sqrt{\det(T_{k,x}^\ast\circ T_{k,x})}\]
be the one Jacobian of $T_{k,x}$ so that 
\[\F(\wtilde G_k)=\int_{\partial G_k} \rhob^{3/2}(\Phi_k(x)) J_1 T_{k,x} d\HH^1.\]
In particular, it is proved in \cite{EspFus} that $J_1 T_{k,x}<1$ in $B_{r/2}(x_0)$ and 
\[ J_1 T_{k,x}\le 1+ 5 \sigma_k\]
in $C_r(x_0)=B_{r}(x_0)\backslash B_{r/2}(x_0)$. We can now decompose,
\begin{align*}
 \F(G_k)-\F(\wtilde G_k)&=\int_{\partial G_k \cap B_r(x_0)} \rhob^{3/2} -\int_{\partial G_k \cap B_r(x_0)} \rhob^{3/2}(\Phi_k(x)) J_1 T_{k,x} d\HH^1\\
 &=\int_{\partial G_k \cap C_r(x_0)} \rhob^{3/2} -\int_{\partial G_k \cap C_r(x_0)} \rhob^{3/2}(\Phi_k(x)) J_1 T_{k,x} d\HH^1\\
 & \quad + 
 \int_{\partial G_k \cap  B_{r/2}(x_0)} \rhob^{3/2} -\int_{\partial G_k \cap B_{r/2}(x_0)} \rhob^{3/2}(\Phi_k(x)) J_1 T_{k,x} d\HH^1.
\end{align*}
But since in $B_r(x_0)$, $|\rhob^{3/2}(x) -\rhob^{3/2}(\Phi_k(x))|\le C\rhob^{3/2}(x) \sigma_k$, 
\begin{align*}\int_{\partial G_k \cap  C_r(x_0)} \rhob^{3/2} - \rhob^{3/2}(\Phi_k(x)) J_1 T_{k,x} d\HH^1&= \int_{\partial G_k\cap C_r(x_0)} (1-J_1 T_{k,x})\rhob^{3/2} \\
&\qquad +J_{1} T_{n,1} (\rhob^{3/2}(x) -\rhob^{3/2}(\Phi_k(x))) d\HH^1\\
 &\ge  -C\sigma_k \int_{\partial G_k \cap  C_r(x_0)} \rhob^{3/2}.
\end{align*}
Similarly,
\begin{align*}
  \int_{\partial G_k \cap  B_{r/2}(x_0)} \rhob^{3/2} - \rhob^{3/2}(\Phi_k(x)) J_1 T_{k,x} d\HH^1&=  \int_{\partial G_k\cap B_{r/2}(x_0)} (1-J_1 T_{k,x})\rhob^{3/2} \\
&\qquad +J_{1} T_{n,1} (\rhob^{3/2}(x) -\rhob^{3/2}(\Phi_k(x))) d\HH^1\\
 &\ge  -C\sigma_k \int_{\partial G_k \cap  B_{r/2}(x_0)} \rhob^{3/2}.
\end{align*}

Hence,~\eqref{estimtildeFn1} follows.\\

{\it Step 1.2.} We now prove that for some $\kappa>0$,
\be\label{estimtildeFn2}
 \mathcal{V}(\wtilde G_k)  -\mathcal{V}(G_k) \ge  \kappa \sigma_k. 
\ee

If  $J\Phi_k$ denotes the Jacobian of $\Phi_k$, it is shown in \cite{EspFus} that 
\be\label{estimJac1}
J\Phi_k\le 1+ 8\sigma_k
\ee
and that in $C_r(x_0)$, 
\be\label{estimJac2}
J\Phi_k\ge 1+ c\sigma_k
\ee
for some $c>0$. We can decompose
\begin{align*}
 \int_{\wtilde G_k} \rhob -\int_{G_k} \rhob &= \int_{G_k\cap B_{r}(x_0)} \rhob(\Phi_k(x)) J\Phi_k(x) -\rhob(x)\\
 &= \int_{G_k\cap C_{r}(x_0)} \rhob(\Phi_k(x)) J\Phi_k(x) -\rhob(x) +\int_{G_k\cap B_{r/2}(x_0)} \rhob(\Phi_k(x)) J\Phi_k(x) -\rhob(x).
\end{align*}
Thanks to~\eqref{estimJac1} and~\eqref{estimJac2}, there holds
\begin{align*}
 \int_{G_k\cap C_{r}(x_0)} \rhob(\Phi_k(x)) J\Phi_k(x) -\rhob(x)&=\, \int_{G_k\cap C_{r}(x_0)} (\rhob(\Phi_k(x))  -\rhob(x))J\Phi_k(x) + (J\Phi_k-1)\rhob\\
 &\ge\, c \sigma_k \int_{G_k\cap C_r(x_0)} \rhob  -2 \int_{G_k\cap C_r(x_0)}  |\rhob(\Phi_k(x))  -\rhob(x))|\\
 &=\,\int_{G_k\cap C_r(x_0)} \rhob \lt(c \sigma_k -2 \lt|\frac{\rhob(\Phi_k(x))-\rhob(x)}{\rhob(x)}\rt| \rt).
\end{align*}
Since $\rhob(x)=(R^2-|x|^2)_+$, we see from the definition of $\Phi_k$ that in $C_{r}(x_0)$, 
\[\lt|\frac{\rhob(\Phi_k(x))-\rhob(x)}{\rhob(x)}\rt|\le  C\sigma_k r\]
hence we can choose $r$ small enough so that $2 C\sigma_k r\le c \sigma_k/2$. So that
\be\label{estimvol1}
\int_{G_k\cap C_{r}(x_0)} \rhob(\Phi_k(x)) J\Phi_k(x) -\rhob(x)\ge \kappa_1 \sigma_k \int_{G_k\cap C_r(x_0)} \rhob,
\ee
for some $\kappa_1>0$. Similarly, thanks to~\eqref{estimJac2},
\begin{align*}
\int_{G_k\cap B_{r/2}(x_0)} \rhob(\Phi_k(x)) J\Phi_k(x) -\rhob(x)&= \int_{G_k\cap B_{r/2}(x_0)} (\rhob(\Phi_k(x))  -\rhob(x))J\Phi_k(x) + (J\Phi_k-1)\rhob\\
&\ge -\kappa_2 \sigma_k \int_{G_k\cap B_{r/2}} \rhob, 
\end{align*}
for some $\kappa_2>0$.  Combining this with~\eqref{estimvol1} and~\eqref{densestim} gives
\[ \mathcal{V}(\wtilde G_k)  -\mathcal{V}(G_k) \ge \sigma_k \lt( \kappa_1 \int_{G_k\cap B_r(x_0)} \rhob- \kappa_2 \int_{G_k\cap B_{r/2}(x_0)} \rhob\rt)\ge   (\kappa_1 \overline{C}-\kappa_2 \delta)\sigma_k,\]
so that~\eqref{estimtildeFn2} holds if $\delta$ is small enough.\\

{\it Step 1.3.} 
 Since $\mathcal{V}(G_k)< \mathcal{V}(B)$ and $\mathcal{V}(G_k)\to \mathcal{V}(B)$,~\eqref{estimtildeFn2} and the continuity of the map $\sigma_k\to \mathcal{V}(\wtilde G_k)$, show that we can find $\sigma_k\to 0$ such that $\mathcal{V}(\wtilde G_k)= \mathcal{V}(B)$. From this we get a contradiction. Indeed, by minimality of $G_k$, 
\begin{multline*}
  \F(G_k)+2\Lambda_1 \lt( \lt[\int_{G_k\Delta B} \rhob -\eps_k'\rt]^2+\eps_k'\rt)^{1/2} +\Lambda_{2,k} |\mathcal{V}(G_k)- \mathcal{V}(B)|\\
  \le\,\F(\wtilde G_k)+2\Lambda_1 \lt( \lt[\int_{\wtilde G_k\Delta B} \rhob -\eps_k'\rt]^2+\eps_k'\rt)^{1/2}\\
  \le\, \F(G_k) +C\sigma_k    +2\Lambda_1 \lt( \lt[\int_{\wtilde G_k\Delta B} \rhob -\eps_k'\rt]^2+\eps_k'\rt)^{1/2},
\end{multline*}
where in the last line we used~\eqref{estimtildeFn1}.  Since  the function $x\to ((x-\eps_k')^2+\eps_k')^{1/2}$ is $1-$Lipschitz, we obtain, recalling~\eqref{estimtildeFn2},
\[ \Lambda_{2,k} \sigma_k  \, \les\,  \sigma_k + \lt|\int_{\wtilde G_k\Delta B} \rhob- \int_{G_k\Delta B} \rhob\rt|.\]
Arguing as for~\eqref{estimtildeFn2}, we can prove that  
\[\lt|\int_{\wtilde G_k\Delta B} \rhob- \int_{G_k\Delta B} \rhob\rt|\,\les\, \sigma_k,\]
from which  we obtain
\[ \Lambda_{2,k} \sigma_k   \les \sigma_k.\]
This is not possible since $\Lambda_{2,k}\to+\infty$.\\%\smallskip

{\it Step 2.} Going back to  $F_n$, minimizers of~\eqref{minLambda0}, we have that $F_n$ converge to some $F_\infty$ minimizing
\[\F(F)+2\Lambda_1 \int_{F\Delta B} \rhob+\Lambda_2 | \mathcal{V}(F)- \mathcal{V}(B)|\]
that is by~\eqref{weakstab}, $F_{\infty}=B$. The $F_n$ are $\Lambda$-minimizers of $\F$. Indeed,  for every $E$,
\[\F(F_n)\le \F(E) +2\Lambda_1 \left|\int_{E\Delta B} \rhob -\int_{F_n\Delta B} \rhob\rt|  +\Lambda_2 \lt| \int_{F_n} \rhob- \int_E \rhob\rt|\le \F(E)+ (2\Lambda_1+\Lambda_2)\int_{E\Delta F_n} \rhob.\]
 Therefore, by Proposition \ref{epsreg} and Proposition \ref{propspherical}, for $n$ large enough,
\be\label{estimFn}\F(F_n)-\F(B)\ge -C_0 \lt(\int_{F_n\Delta B} \rhob\rt)^2.\ee \\

{\it Step 3.} Let $\gamma_n=\int_{F_n\Delta B} \rhob$. Then, by minimality of $F_n$, there holds (using $E_n$ as a competitor and recalling~\eqref{hypabsurd}),
\[\F(F_n)+ 2\Lambda_1 \lt( (\gamma_n-\eps_n)^2 +\eps_n\rt)^{1/2}\le \F(E_n)+2\Lambda_1\eps_n^{1/2}\le \F(B)+2\Lambda_1\eps_n^{1/2}-2 C_0 \eps_n^2\]
which combined with~\eqref{estimFn} gives
\be\label{estimepsn}2\Lambda_1 \lt( (\gamma_n-\eps_n)^2 +\eps_n\rt)^{1/2}-C_0 \gamma_n^2\le -2 C_0 \eps_n^2+2\Lambda_1\eps_n^{1/2}.\ee
From this we obtain
\[2\Lambda_1 \lt( (\gamma_n-\eps_n)^2 +\eps_n\rt)^{1/2}\le C_0 \gamma_n^2+2\Lambda_1\eps_n^{1/2}.\]
Dividing by $2\Lambda_1$ and taking the square of both sides, we get
\[(\gamma_n-\eps_n)^2 +\eps_n\le \frac{C_0^2}{4\Lambda_1^2} \gamma_n^4 + \frac{C_0}{\Lambda_1} \gamma_n^2\eps_n^{1/2} + \eps_n\]
subtracting $\eps_n$ and dividing by $\gamma_n^2$ we deduce
\[\lt(1-\frac{\eps_n}{\gamma_n}\rt)^2 \le \frac{C_0^2}{4\Lambda_1^2} \gamma_n^2 + 2\frac{C_0}{\Lambda_1}\eps_n^{1/2}\]
hence $1-\frac{\eps_n}{\gamma_n}\to 0$. Going back to~\eqref{estimepsn}, we obtain
\[2\Lambda_1\eps_n^{1/2} \lt( \lt[\eps_n\lt(\frac{\gamma_n}{\eps_n}-1\rt)^2+1\rt]^{1/2}-1\rt)\le -C_0\eps_n^2 \lt(2-\frac{\gamma_n^2}{\eps_n^2}\rt)\]
from which we get a contradiction since for $n$ large enough the left-hand side is positive while the right-hand side is negative. This ends the proof of Proposition~\ref{propestiminstab}.%~\eqref{stabilitytrue}
\end{proof}
%For $M>0$, let $F_n$ be the minimizer (without volmume constraint) of 
%\[\mathcal{E}(F)=\mathcal{F}(F)+2C_0\left(\int_{F\Delta B} \rhob \rt)^2+M\int_{F\delta\!E_n} \rhob.\]
%Notice first that since $\mathcal{E}(F_n)\le \mathcal{E}(E_n)$, $M\int_{F\delta\!E_n} \rhob\le \mathcal{F}(B)$ and therefore for $M$ large enough and for $n$ large enough, $\int_{F_n\Delta B} \rhob \le \eps_1$ (where $\eps_1$ is the one given in Proposition \ref{epsreg}. Moreover, for every $G$, from $\mathcal{E}(F_n)\le \mathcal{E}(G)$, we obtain
%\[\mathcal{F}(F_n)\le \mathcal{F}(G)+\]

\section{The case without confining potential}\label{notrap}
\subsection{The functional}
We finally go back to the situation where $g>1$ but where there is no trapping potential. We thus consider for a fixed bounded open set $\Omega\subset \R^2$
\[ F_{\eps}(\eta)=\eps\int_{\Omega} |\nabla \eta_1|^2 +|\nabla \eta_2|^2 +\frac{1}{\eps}\int_{\Omega}\frac{1}{2} \eta_1^4 +\frac{g}{2} \eta_2^4+K \eta_1^2 \eta_2^2\]
with the constraints
\be\label{volcosfin}\int_{\Omega} \eta_i^2 =\alpha_i.\ee
Letting
\[\gamma= (\alpha_1 +\alpha_2 g^{1/2}),\]
it can be easily seen that minimizers of the Thomas-Fermi energy
\[E(\rhob)=\int_{\Omega}\frac{1}{2} \rhob_1^2 +\frac{g}{2} \rhob^2_2+K \rhob_1 \rhob_2\]
under the mass constraint
\[\int_{\Omega} \rhob_i=\alpha_i\]
are given by
\[\rhob_1=\frac{\gamma}{|\Omega|} \chi_E, \qquad \qquad \rhob_2=\frac{\gamma}{|\Omega| g^{1/2}} \chi_{\Omega \backslash E}\]
for any set $E\subset \Omega$ with $|E|= \frac{|\Omega| \alpha_1}{\gamma}$. The minimal energy is then
\[E_0=E(\rhob)=\frac{\gamma^2}{2|\Omega|} \]
This motivates the change of variables
\[
 \widetilde{\eta}_1= \left(\frac{\gamma}{|\Omega|}\right)^{-1/2} \eta_1, \qquad  \widetilde{\eta}_2= \left(\frac{\gamma}{|\Omega| g^{1/2}}\right)^{-1/2} \eta_2, \qquad \widetilde{x}=\left(\frac\gamma{|\Omega|}\right)^{1/2} x, \qquad \jj_\eps(\widetilde{\eta})=\frac{|\Omega|}{\gamma} F_\eps(\eta)-\frac{\gamma}{2\eps}
\]
yielding
\[\jj_\eps(\widetilde{\eta})=\eps \int_{\widetilde{\Omega}} |\nabla \widetilde{\eta}_1|^2 +g^{-1/2}|\nabla \widetilde{\eta}_2|^2 +\frac{1}{\eps} \int_{\widetilde{\Omega}} \frac{1}{2} \left( \widetilde{\eta_1}^2+  \widetilde{\eta_2}^2 -1\right)^2+ (\widetilde{K}-1)\widetilde{\eta_1}^2\widetilde{\eta_2}^2\] 
where $1<\widetilde{K}= \frac{K}{g^{1/2}}$, under the mass constraint
\[ \int_{\widetilde{\Omega}} \widetilde{\eta}_i^2 =\widetilde{\alpha}_i\]
where $\widetilde{\alpha}_1=\alpha_1$ and $\widetilde{\alpha_2}=g^{1/2} \alpha_2$ so that $\widetilde{\alpha}_1+\widetilde{\alpha_2}=\gamma=|\widetilde{\Omega}|$.
Forgetting the tildas and letting $1>\lambda^2=g^{-1/2}$, we finally obtain that the original minimization problem is equivalent to minimizing

\[\jj_\eps(\eta)= \eps \int_{\Omega}  |\nabla \eta_1|^2 +\lambda^2 |\nabla \eta_2|^2+ \frac{1}{\eps}\int_{\Omega} \frac{1}{2}\left(\eta_1^2+\eta_2^2-1\right)^2 +(K-1) \eta_1^2\eta_2^2 
\]
under the volume constraint~\eqref{volcosfin}.

\subsection{The one dimensional transition problem}
Let us introduce, the following energy defined for $\eta=(\eta_1,\eta_2)\in W^{1.2}_{loc}(\R,\R^2)$, 
\[
E_{\lambda,K}(\eta)\ =\ \int_{\R} |\eta_1'|^2 +\lambda^2 | \eta'_2|^2+ W_K(\eta_1,\eta_2),
\]
where for $s,t\ge 0$, we introduced the potential
\[
W_K(s,t)\ =\ \dfrac12(1-s^2-t^2)^2 + (K-1) s^2 t^2.
\]
We consider the minimization problem, 
\begin{equation}
\label{siglbdK}
\sigma_{\lambda,K}\ =\ \inf \left\{ E_{\lambda,K}(\eta) \ :\ \eta=(\eta_1,\eta_2)\in W^{1,2}_{loc}(\R,\R_+^2),\  \lim_{-\infty} \eta_1=0, \ \lim_{+\infty} \eta_1=1\right\}.
\end{equation}

Let us show that problem~\eqref{siglbdK} admits a minimizer. The result also follows from~\cite[Th. 2.1]{ABCP2015} (see also \cite{AliFu}) with a different proof. Let us point out that uniqueness of the optimal profile has been recently shown \cite{Sour}.
\begin{proposition}
There exist minimizing pairs to $\bsigma$. Every such minimizing pair is smooth. Moreover, the following equipartition of energy holds:
\[| \eta'_1|^2 +\lambda^2 | \eta'_2|^2=W_K(\eta_1,\eta_2).\]
\end{proposition}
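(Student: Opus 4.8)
The plan is to establish existence by the direct method in the calculus of variations applied to a suitably truncated problem, then recover smoothness via elliptic regularity, and finally derive the equipartition identity from the invariance of the energy under translations. First I would take a minimizing sequence $(\eta^n)=(\eta_1^n,\eta_2^n)$ for $\bsigma$. The immediate obstacle is that the domain $\R$ is unbounded and the admissible class is constrained only through boundary limits at $\pm\infty$, so compactness is not free: one must rule out loss of mass to infinity and control the profiles near the two ends. I would first observe that one may assume $0\le\eta_i^n\le 1$: indeed, truncating $\eta_1^n$ and $\eta_2^n$ at level $1$ (i.e. replacing $\eta_i$ by $\min(\eta_i,1)$, and by $\max(\eta_i,0)$ to keep nonnegativity) does not increase $E_{\lambda,K}(\eta)$, since both $W_K$ and the gradient terms decrease — the potential $W_K(s,t)$ is nondecreasing in each variable once $s^2+t^2\ge 1$, and truncation only reduces $|\eta_i'|$. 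This gives a uniformly bounded minimizing sequence. Because $W_K$ vanishes only on the set $\{s^2+t^2=1\}\cap(\{s=0\}\cup\{t=0\})=\{(0,1),(1,0)\}$ (using $K>1$), the finiteness of the energy forces $\eta^n$ to be close to $(0,1)$ near $-\infty$ and to $(1,0)$ near $+\infty$; a standard argument bounds the length of the "transition region" and shows the minimizing sequence can be taken with uniformly controlled behavior, so that, along a subsequence, $\eta^n\rightharpoonup\eta$ weakly in $W^{1,2}_{loc}$ and strongly in $L^\infty_{loc}$, with $\eta$ admissible and $E_{\lambda,K}(\eta)\le\liminf E_{\lambda,K}(\eta^n)=\bsigma$ by weak lower semicontinuity of the gradient terms and Fatou for the potential term. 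Hence $\eta$ is a minimizer.

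Next, smoothness: a minimizer solves the Euler--Lagrange system
\[
-\eta_1''=-\partial_s W_K(\eta_1,\eta_2),\qquad -\lambda^2\eta_2''=-\partial_t W_K(\eta_1,\eta_2),
\]
which, since $W_K$ is a polynomial and $\eta$ is bounded, has right-hand side in $L^\infty_{loc}$; bootstrapping through Schauder / elliptic (here ODE) regularity, $\eta\in C^\infty(\R,\R^2)$. One should also check that the minimizer genuinely attains the boundary limits, which follows from the energy bound: $W_K(\eta_1,\eta_2)\to 0$ at $\pm\infty$ forces $\eta$ to approach the zero set of $W_K$, and a connectedness/continuity argument pins down which of the two zeros is attained at each end (consistent with the constraint $\eta_1\to 0$ at $-\infty$, $\eta_1\to 1$ at $+\infty$).

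Finally, equipartition. I would exploit translation invariance: for $t\in\R$ let $\eta^t(x)=\eta(x+t)$; then $E_{\lambda,K}(\eta^t)=E_{\lambda,K}(\eta)=\bsigma$ for all $t$, so $t=0$ is a critical point of $t\mapsto E_{\lambda,K}(\eta^t)$ — though this by itself is automatic. The sharper route is to use the Hamiltonian structure of the ODE system: multiplying the first equation by $\eta_1'$, the second by $\eta_2'$, and adding, one gets
\[
\frac{d}{dx}\!\left(|\eta_1'|^2+\lambda^2|\eta_2'|^2-W_K(\eta_1,\eta_2)\right)=0,
\]
so $|\eta_1'|^2+\lambda^2|\eta_2'|^2-W_K(\eta_1,\eta_2)$ is constant along the trajectory. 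Evaluating the constant at $-\infty$, where the finite-energy condition forces $\eta_1',\eta_2'\to 0$ and $W_K(\eta_1,\eta_2)\to 0$, shows the constant is $0$, yielding the claimed identity $|\eta_1'|^2+\lambda^2|\eta_2'|^2=W_K(\eta_1,\eta_2)$. (Alternatively, one derives the same conclusion by the classical inner-variation argument $\eta_\tau(x)=\eta(x+\tau\phi(x))$ with $\phi\in C^\infty_c$, differentiating $E_{\lambda,K}(\eta_\tau)$ at $\tau=0$ and using that $\eta$ minimizes.) The main obstacle in the whole argument is the compactness step — ensuring no mass or transition energy escapes to infinity along the minimizing sequence — and I would handle it by the truncation above together with a translation that centers the transition layer (e.g. normalizing so that $\eta_1(0)=1/2$) before extracting a limit.
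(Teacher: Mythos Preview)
Your proposal is correct and follows essentially the same approach as the paper: existence by the Direct Method (the paper packages the compactness step into a separate lemma that uses the relaxed one-well potential $w_K(s)=\inf_t W_K(s,t)$ and the Modica--Mortola trick, which is precisely the ``standard argument'' you allude to for bounding the transition region after centering), smoothness from the Euler--Lagrange system, and equipartition by multiplying the Euler--Lagrange equations by $(\eta_1',\eta_2')$ and using the conditions at infinity to identify the constant. The only cosmetic difference is that the paper does not explicitly truncate to $[0,1]$ but instead gets the needed $W^{1,2}_{loc}$ bounds directly from the energy.
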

\begin{proof}
We establish the existence of a minimizing pair $\eta=(\eta_1,\eta_2)$ by the Direct Method of the calculus of variations. The required compactness and semi-continuity result is stated in Lemma~\ref{lemcomp}.  The smoothness of $\eta$ is the consequence of the Euler-Lagrange equations
\[
\left(\begin{array}{c}-2\eta_1'' \\ -2\lambda^2 \eta_2''\end{array}\right)+\nabla W_K(\eta_1,\eta_2)\ =\ 0,\qquad \mbox{in }\R.
\]
For the  equipartition of energy, we take the dot product of the Euler-Lagrange equations with $(\eta_1',\eta_2')^T$. Integrating, we see that the quantity $-| \eta'_1|^2 -\lambda^2 | \eta'_2|^2 + W_K(\eta_1,\eta_2)$ does not depend on $x$. Using the conditions at infinity, we conclude that $| \eta'_1|^2 +\lambda^2 | \eta'_2|^2 = W_K(\eta_1,\eta_2)$ on $\R$.
\end{proof}

\begin{lemma}
\label{lemcomp} 
Let $(\eta^k)_{k\geq 0}\subset W^{1,2}_{loc}(\R,\R_+^2)$ be such that  there exists $C_0\geq 0$ such that for $k\geq 0$, 
\[
\liminf_{x\to -\infty} \eta^k_1< 1/4, \qquad  \limsup_{x\to+\infty} \eta^k_1>3/4\qquad\mbox{and}\qquad E_{\lambda,K}(\eta^k)\leq C_0.
\]
Then, there exist $\eta=(\eta_1,\eta_2)\in W^{1.2}_{loc}(\R,\R^2_+)$, a subsequence (still denoted by $(\eta^k)$) and a sequence $(z_k)\subset \R$ such that 
\[
\eta^k(\cdot-z_k)\ \to\ \eta\qquad\mbox{uniformly on any bounded subset  of } \R.
\]
Moreover, 
\begin{equation}
\label{condinfinity}
\lim_{x\to -\infty} \eta= (0,1), \qquad  \lim_{x\to+\infty} \eta =(1,0),
\end{equation}
and
\[
 E_{\lambda,K}(\eta)\ \leq\ \liminf_{k\to+\oo} E_{\lambda,K}(\eta^k).
\]
\end{lemma}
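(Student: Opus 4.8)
The proof is by the Direct Method of the calculus of variations, combined with the usual translation trick for $1$D transition problems. The standard part is compactness and lower semicontinuity; the only genuinely delicate point will be to choose the translations $z_k$ so that the limiting profile connects the correct wells at $\pm\infty$. I first record the a priori bounds. Since $K>1$, the potential is nonnegative, $W_K\geq 0$, and vanishes on $\R_+^2$ exactly at $(0,1)$ and $(1,0)$; moreover $W_K(s,t)\geq\tfrac12(1-s^2-t^2)^2$, so $W_K$ grows superquadratically. Together with $\int_\R(|(\eta_1^k)'|^2+\lambda^2|(\eta_2^k)'|^2)\le C_0$, which yields the uniform bound $|\eta^k(x)-\eta^k(y)|\le(1+\lambda^{-1})\sqrt{|x-y|}\,\sqrt{C_0}$, a point where $|\eta^k|$ is large would force $|\eta^k|$ to remain large on a unit interval, hence $\int_\R W_K(\eta^k)$ to be large; this gives a uniform $L^\infty$ bound $\|\eta^k\|_\infty\le M_0(C_0,\lambda,K)$. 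Finally, since $W_K(s,\cdot)\to+\infty$, one has $\min_{t\geq0}W_K(s,t)\geq c_0>0$ for all $s\in[1/4,3/4]$; consequently, on any maximal interval $I$ along which $\eta_1^k$ runs from $1/4$ to $3/4$ (a \emph{crossing}, on which $\mathrm{osc}\,\eta_1^k\geq\tfrac12$) the arithmetic--geometric inequality gives $\int_I(|(\eta_1^k)'|^2+W_K(\eta^k))\geq c_0|I|+\tfrac14|I|^{-1}\geq\sqrt{c_0}$, so $\eta^k$ has at most $N:=\lceil C_0/\sqrt{c_0}\rceil$ crossings. The hypotheses on $\eta^k$ force these crossings to alternate ascent/descent, to begin and end with an ascent (the same band argument shows $\eta_1^k$ is eventually $\geq 3/4$ near $+\infty$), and to have algebraic sum (ascents minus descents) equal to one.

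Now I choose the translations. After passing to a subsequence I may assume the number of crossings is constant and that the gap between any two consecutive crossings either stays bounded or tends to $+\infty$. Grouping the crossings into \emph{clusters} separated by the diverging gaps, each cluster has a well-defined net effect (it moves $\eta_1^k$ between the two ``sides'' $\{\leq1/4\}$ and $\{\geq3/4\}$), rising clusters outnumber falling ones by exactly one, so there is at least one \emph{rising} cluster, i.e. one with $\eta_1^k$ on the $0$-side to its immediate left and on the $1$-side to its immediate right, the adjacent plateaus being either extremal or of length tending to $+\infty$. I take $z_k$ so that $\eta^k(\cdot-z_k)$ positions the first such rising cluster around the origin.

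On each interval $[-n,n]$ the translated functions $\eta^k(\cdot-z_k)$ are bounded in $L^\infty\cap W^{1,2}$, hence equicontinuous; by Arzel\`a--Ascoli and a diagonal argument I extract a subsequence converging, uniformly on bounded sets and weakly in $W^{1,2}_{loc}(\R)$, to some $\eta\in W^{1,2}_{loc}(\R,\R_+^2)$ with $\|\eta\|_\infty\le M_0$. For fixed $n$, convexity gives weak-$L^2$ lower semicontinuity of the gradient part of $\int_{-n}^n$, while $\int_{-n}^n W_K(\eta^k(\cdot-z_k))\to\int_{-n}^n W_K(\eta)$ by uniform convergence and continuity of $W_K$; adding these and using translation invariance of Lebesgue measure, $\int_{-n}^n(|\eta_1'|^2+\lambda^2|\eta_2'|^2+W_K(\eta))\le\liminf_k E_{\lambda,K}(\eta^k)$, and letting $n\to\infty$ yields $E_{\lambda,K}(\eta)\le\liminf_k E_{\lambda,K}(\eta^k)\leq C_0$.

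It remains to prove \eqref{condinfinity}, which is the heart of the matter. Since $E_{\lambda,K}(\eta)<\infty$ we have $\int_\R W_K(\eta)<\infty$ and $\int_\R|\eta'|^2<\infty$; the latter makes the oscillation of $\eta$ uniformly small on unit intervals far out. If $\eta$ stayed at distance $\geq\delta>0$ from $\{(0,1),(1,0)\}$ along a sequence tending to $+\infty$, then on a disjoint family of unit intervals $W_K(\eta)$ would be bounded below by a positive constant, contradicting integrability; hence $\mathrm{dist}(\eta(x),\{(0,1),(1,0)\})\to0$ as $x\to\pm\infty$, and the same disjoint-interval argument applied at a level lying in the ``middle'' between the two wells forbids $\eta$ from oscillating between them, so $\eta$ converges to a single well as $x\to+\infty$ and as $x\to-\infty$. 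Finally, by the choice of $z_k$ on a rising cluster, for every $R>0$ and $k$ large the translated $\eta_1^k$ is $\leq1/4$ on $(-\infty,-R]$ and $\geq3/4$ on $[R,+\infty)$ (the adjacent plateaus lengthen and the other crossings escape to $\pm\infty$); passing to the limit, $\eta_1\leq1/4$ near $-\infty$ and $\eta_1\geq3/4$ near $+\infty$, which together with convergence to a single well forces $\eta(-\infty)=(0,1)$ and $\eta(+\infty)=(1,0)$. I expect this last identification to be the main obstacle: a naive centering (say at a point where $\eta_1^k=1/2$) need not produce a genuine $(0,1)\to(1,0)$ connection, since the transition could run off to infinity or split; it is the uniform bound on the number of crossings, their alternation, and the clustering argument that make the right choice of $z_k$ possible.
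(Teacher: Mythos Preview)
Your proof is correct and follows essentially the same strategy as the paper's: bound the number of $[1/4,3/4]$-crossings via the double-well structure of $\inf_{t\geq0} W_K(\cdot,t)$, group them into clusters separated by diverging gaps, center the translation at a rising cluster, then extract a limit via Arzel\`a--Ascoli and conclude by lower semicontinuity and the plateau structure. One small imprecision worth fixing: on the plateau to the left of the chosen rising cluster you only know $\eta_1^k<3/4$ (not $\leq 1/4$), and only on an interval of diverging length rather than on all of $(-\infty,-R]$ (other clusters may sit further left), but this is exactly what the paper records and it suffices to force $\limsup_{-\infty}\eta_1\leq 3/4$, hence $\eta(-\infty)=(0,1)$.
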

To prepare for the proof, we start by noticing that (similarly to the Ginzburg-Landau functional for Type-I superconductors \cite{CoGoOtSe}) 
 the Gross-Pitaievskii energy controls a classical double well potential. This is the key ingredient in order to get compactness for sequences of bounded energy. 
 Let us introduce  the relaxed potential,
\begin{equation}
\label{wK}
{w}_K(s)\ =\ \inf_{t\in\R} W_K(s,t)\ =\ \begin{cases}
\dfrac12 (1-s^2)^2 -\dfrac12 (1-K s^2)^2&\mbox{ if }0\leq s< K^{-1/2},\\ \\
\dfrac12 (1-s^2)^2&\mbox{ if } s\geq K^{-1/2}.
\end{cases}
\end{equation}
The function $w_K$ is a standard double-well potential  (see Figure~\ref{fig:relpot}). 
\begin{figure}[ht]
\psfrag{0}{$0$}
\psfrag{1}{$1$}
\psfrag{m}{\!\!\!\!\!$1/2$}
\psfrag{x}{$s$}
\psfrag{d}{$K^{-1/2}$}
\psfrag{W}{$w_K$}
\begin{center}
\includegraphics[scale=.5]{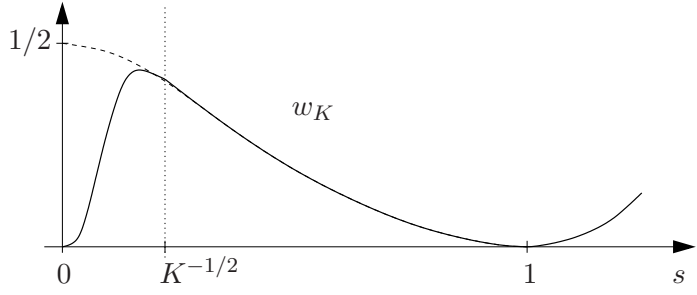}
\end{center}
\caption{\label{fig:relpot} Relaxed potential ${w}_K(s)$ and limit potential $w(s)=(1-s^2)^2$ (dashed line).}
\end{figure}
In particular, since  there holds $w_K(s)=w_K(t)\leq\ W_K(s,t)$, we have for $\eta=(\eta_1,\eta_2)\in W^{1,2}_{loc}(\R,\R_+^2)$
\begin{equation}
\label{dim1}
\int_\R |\eta_1'|^2 + w_K(\eta_1)\ \leq\ E_{\lambda,K}(\eta)\qquad\mbox{and}\qquad \int_\R \lambda^2|\eta_2'|^2 + w_K(\eta_2)\ \leq\ E_{\lambda,K}(\eta).
 \end{equation}
\begin{proof}[Proof of Lemma~\ref{lemcomp}]
Let us consider a sequence $\eta^k=(\eta^k_1,\eta^k_2)$  and $C_0\geq 0$ satisfying the hypotheses of the lemma. Let us fix $k\geq 0$ and let us consider $y<z$ such that either, 
$\eta_1^k(y)=1/4$, $\eta_1^k(z)=3/4$  or $\eta_1^k(y)=3/4$, $\eta_1^k(z)=1/4$. Using the Modica-Mortola trick, we see that
\[
\int_{y}^z |(\eta_1^k)'|^2 + w_K\eta_1^k\ \geq\ 2 \int_{y}^z \sqrt{w_K\eta_1^k}\, |(\eta_1^k)'|\ \geq\ 2\int_{1/4}^{3/4}\sqrt{w_K(s)}\, ds \ =\ \delta.
\]
Since $\delta>0$, taking into account~\eqref{dim1} and the bound $E_{\lambda,K}(\eta^k)\leq C_0$, we deduce that there exists an odd integer $n_k\in [1, C_0/\delta]$ and sequence of intervals
\[
I_0^k=(-\infty,x_1^k),\ I_1^k=(x_1^k,x_2^k),\ \cdots\ ,\ I_{n_k-1}^k=(x_{n_k-1}^k,x_{n_k}^k), \ I_{n_k}^k=(x_{n_k},+\infty),
\]
such that $\eta_1^k(x_j^k)=1/2$ for $j=1,\cdots,n_k$ and there exists $y_0^k,\cdots,y_{n_k}^k$ with $y_j\in I_j^k$ such that 
\[
\begin{cases}
\eta_1^k(y_j^k)=1/4 \quad \mbox{and}\quad  \eta_1^k<3/4\mbox{ in } I_j^k &\mbox{ if $j$ is even,}\\
\eta_1^k(y_j^k)=3/4 \quad \mbox{and}\quad  \eta_1^k>1/4\mbox{ in } I_j^k &\mbox{ if $j$ is odd.}
\end{cases}
\] 
See the example of Figure~\ref{fig:Ijk}. 
\begin{figure}[ht]
\psfrag{0}{{\scriptsize $0$}}
\psfrag{1}{{\scriptsize $1$}}
\psfrag{a}{{\scriptsize $1/4$}}
\psfrag{b}{{\scriptsize $3/4$}}
\psfrag{c}{{\scriptsize $1/2$}}
\psfrag{X}{$x$}
\psfrag{x}{$x_1^k$}
\psfrag{y}{$x_2^k$}
\psfrag{z}{$x_3^k$}
\psfrag{u}{$y_0^k$}
\psfrag{v}{$y_1^k$}
\psfrag{w}{$y_2^k$}
\psfrag{h}{$y_3^k$}
\begin{center}
\includegraphics[scale=.5]{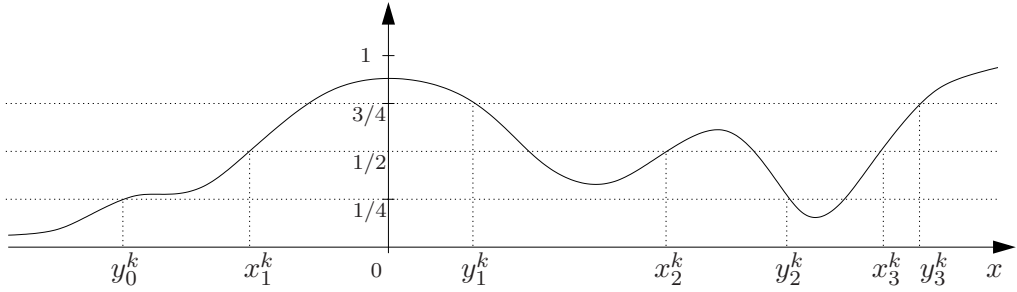}
\end{center}
\caption{\label{fig:Ijk} Example of construction of the intervals $I_j^k$.}
\end{figure}
Now, up to extraction, we assume that $n_k=n$ does not depend on $k$. Now, if the sequence $(x^k_2-x^k_1)_k$ is not bounded, we can extract
a subsequence such that $(x^k_2-x^k_1)\to\oo$. Repeating the process, we can assume that the following property holds true: there exist $R \geq 0$
and a partition of $\{1,..,n\}=\{a_0,\dots,b_0\}\cup\{a_1,\dots,b_1\}\cup\cdots\cup\{a_m,\dots,b_m\}$ with $a_0=1$, $b_m=n$ and $a_{j+1}=b_j +1$ for $j=0,\dots,m-1$, such that,
\[
\forall\,l\in\{0,\cdots,m\}\quad 0<x^k_{b_l}-x^k_{a_{l}}\ \leq\ R,\qquad \forall l\in\{0,\cdots,m-1\}\quad x^k_{a_{l+1}}-x^k_{a_{l}}\ \stackrel{k\to\oo}\longrightarrow\ +\oo.
\]
 Since $a_0=1$ and $b_m=n$ are odd there exists $l_\star\in\{0,\cdots,m\}$ such that $a_{l_\star}$ and $b_{l_\star}$ are odd.  We set $z_k=x_{a_{l_\star}}^k$ and $\tilde{\eta}_1^k=\eta_1^k(\cdot-z_k)$. By construction, there exists a sequence $(R_k)\subset [R,+\infty)$ with $R_k\to+\oo$ such that for $k\geq 0$, 
\begin{equation}
\label{tildxi}
\tilde{\eta}_1^k<3/4\quad \mbox{in } [-R_k,0],\qquad \tilde{\eta}_1^k(0)=1/2\qquad\mbox{and}\qquad \tilde{\eta}_1^k>1/4 \quad\mbox{in } [R,R_k].
\end{equation}
Now, from the energy bound $E_{\lambda,K}(\eta^k)\leq C_0$, we see that $(\tilde{\eta}_1^k)$ is bounded in $W^{1,2}_{loc}(\R)$. Up to extraction, there exists $\eta_1\in W^{1,2}_{loc}(\R)$ such that $\eta_1^k(\cdot-z_k)=\tilde{\eta}_1^k\to\eta_1$ locally uniformly. Moreover, from~\eqref{tildxi}, we have
\begin{equation}
\label{xi}
\limsup_{-\oo}\eta_1 \leq 3/4,\qquad \eta_1(0)=1/2\qquad\mbox{and}\qquad\liminf_{+\oo}\eta_1 \geq 1/4.
\end{equation} 
Similarly, up to extraction, there exists $\eta_2\in W^{1,2}_{loc}(\R)$ such that $\eta_2^k(\cdot-z_k)\to\eta_2$ locally uniformly. By lower semi-continuity of the Dirichlet energy, we have 
\[
E_{\lambda,K}(\eta_1,\eta_2)\ \leq \liminf_{k\to +\infty} E_{\lambda,K}(\eta_1^k,\eta_2^k).
\] 
To end the proof of Lemma~\ref{lemcomp}, we have to establish that $\eta=(\eta_1,\eta_2)$ satisfies the conditions at infinity~\eqref{condinfinity}.
 Since $\int_{\R} |\eta_i'|^2$ is finite $\eta$ admits limits $\eta^\pm$ at $\pm \infty$.   From the bound $\int_\R W_K(\eta_1,\eta_2)\leq C$, we get  $\eta^-,\eta^+\in\{ (0,1) , (1,0) \}$. Eventually~\eqref{xi} implies $\eta^-=(0,1)$, $\eta^+=(1,0)$, that is~\eqref{condinfinity}. 
\end{proof}

 % Notice that in order to have $E_{\lambda,K}(\eta)$ finite, if $\lim_{-\infty} \eta_1=0$ and $ \lim_{+\infty} \eta_1=1$ then $\lim_{-\infty} \eta_2=1$ and $ \lim_{+\infty} \eta_2=0$.\\

\subsection{The $\Gamma-$convergence result}
 In this section we study the $\Gamma-$convergence of $\jj_\eps$ as $\eps$ goes to zero.

\begin{theorem}
 When $\eps\to 0$, $\jj_\eps$ $\Gamma-$converges for the strong $L^1$ topology to 
 \[\J(\eta_1,\eta_2)=\begin{cases}
                                \sigma_{\lambda,K} P(E,\Omega) & \textrm{if } \eta_1=\chi_E=1-\eta_2 \textrm{ and } |E|=\alpha_1\\
                                +\infty & \textrm{otherwise}.
                               \end{cases}\]

\end{theorem}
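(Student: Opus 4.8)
The plan is to establish the three standard ingredients of a $\Gamma$-convergence statement: a compactness property (which also takes care of the $\liminf$ inequality at non-admissible limits), the sharp $\Gamma$-$\liminf$ inequality at admissible limits, and the construction of recovery sequences. For \textbf{compactness}, I would start from the decoupled bound~\eqref{dim1}: if $\jj_\eps(\eta^\eps)\le C$ then $\eps\int_\Omega|\nb\eta^\eps_1|^2+\tfrac1\eps\int_\Omega w_K(\eta^\eps_1)\le C$, and the analogous bound for $\eta^\eps_2$ (with $\lambda^2$ in front of the gradient). Since $w_K$ is a genuine double well vanishing exactly at $0$ and $1$ and growing quartically (Figure~\ref{fig:relpot}), the Modica--Mortola trick applied to $\Psi(s)=2\int_0^s\sqrt{w_K}$ yields a uniform $BV(\Omega)$ bound on $\Psi(\eta^\eps_1)$ and a uniform $L^4(\Omega)$ bound on $\eta^\eps_1$; hence, along a subsequence, $\eta^\eps_1\to\eta_1$ in $L^1(\Omega)$ with $w_K(\eta_1)=0$ a.e., i.e. $\eta_1=\chi_E$ for a set $E$ of finite perimeter, and likewise $\eta^\eps_2\to\eta_2=\chi_{E'}$. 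Passing to the limit in $\int_\Omega((\eta^\eps_1)^2+(\eta^\eps_2)^2-1)^2\le C\eps$ forces $\eta_1^2+\eta_2^2=1$ a.e., so $E'=\Omega\setminus E$ up to a null set, and the $L^4$ bound lets the constraints~\eqref{volcosfin} pass to the limit, so $|E|=\alpha_1$. In particular any $L^1$-limit of a sequence of bounded energy is admissible, which settles the $\liminf$ inequality whenever the limit is not of the form $(\chi_E,1-\chi_E)$ with $|E|=\alpha_1$.

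For the \textbf{$\Gamma$-$\liminf$ inequality} at an admissible limit $(\chi_E,\chi_{E^c})$, I would use the slicing/blow-up method. Passing to a subsequence realizing the $\liminf$ and along which the energy densities converge weakly-$*$ to a finite measure $\lambda$ on $\Omega$ with $\lambda(\Omega)=\liminf_\eps\jj_\eps(\eta^\eps)$, it suffices to show that the density of $\lambda$ with respect to $\HH^1\rst\partial^*E$ is at least $\bsigma$ at $\HH^1$-a.e. point $x_0$ of the reduced boundary. Fixing such an $x_0$ with normal $\nu_0$ and blowing up, one keeps only the $\partial_{\nu_0}$ part of the gradient and applies Fubini in the direction $\nu_0$, so that on $\HH^1$-a.e. one-dimensional fibre the contribution is, after the rescaling $t\mapsto\eps t$, of the form $\int(|\eta'_1|^2+\lambda^2|\eta'_2|^2)+W_K(\eta_1,\eta_2)$ with $\eta_1$ crossing from values near $0$ to values near $1$. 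Lemma~\ref{lemcomp}, applied after translating to the crossing, then produces a limiting transition profile whose energy is $\ge\bsigma$ by the definition~\eqref{siglbdK}; integrating this one-dimensional bound over the fibres and using that the blow-up of $\partial^*E$ at $x_0$ is the line $\nu_0^\perp$ gives the density lower bound, hence $\liminf_\eps\jj_\eps(\eta^\eps)\ge\bsigma\,P(E,\Omega)$.

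For the \textbf{recovery sequences}, given an admissible $(\chi_E,\chi_{E^c})$ with $|E|=\alpha_1$, I would first note that by $L^1$-lower semicontinuity of the $\Gamma$-$\limsup$ it is enough to treat sets $E$ with smooth relative boundary: a general $E$ is approximated in $L^1$ by such sets with $P(\cdot,\Omega)$ converging and the volume kept equal to $\alpha_1$ by a small dilation, and one concludes by a diagonal argument. For $\partial E\cap\Omega$ smooth, let $d$ be the signed distance to $\partial E$ and let $(\phi_1,\phi_2)$ be an almost-minimizer of~\eqref{siglbdK} with $\phi_1(-\oo)=0$, $\phi_1(+\oo)=1$; one sets $\eta^\eps_i(x)=\phi_i(d(x)/\eps)$ in a tubular neighborhood of $\partial E\cap\Omega$, cut off (at scale $\eps|\log\eps|$, say, using the exponential decay of $\phi$ to its limits) so that $\eta^\eps$ equals $(1,0)$ well inside $E$ and $(0,1)$ well outside, with no transition layer meeting $\partial\Omega$. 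The coarea formula then gives $\jj_\eps(\eta^\eps)\to\bsigma\,\HH^1(\partial E\cap\Omega)=\bsigma\,P(E,\Omega)$ and $\eta^\eps_i\to\chi_E,\chi_{E^c}$ in $L^1$; the mass constraints~\eqref{volcosfin} are finally restored by multiplying each component by a factor $1+O(\eps)$, which perturbs the energy only by $o(1)$ (cf.\ \cite{GolRoyo}).

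The step I expect to be the main obstacle is the sharp $\Gamma$-$\liminf$ inequality. Compactness comes essentially for free from the decoupled Modica--Mortola bound~\eqref{dim1}, but recovering the \emph{optimal} constant $\bsigma$ --- which genuinely encodes the coupled two-component transition with the two different weights $1$ and $\lambda^2$ --- requires the localization at points of $\partial^*E$ and the one-dimensional compactness of Lemma~\ref{lemcomp} performed \emph{in the normal direction}; slicing simultaneously in all directions and integrating would only yield a non-optimal constant. A secondary technical point is the approximation by smooth sets with the correct volume and the behavior of the relative perimeter near $\partial\Omega$, but this is routine.
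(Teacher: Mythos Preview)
Your proposal is correct and follows essentially the same route as the paper: the decoupled Modica--Mortola bound~\eqref{mainestim} for compactness, slicing for the sharp $\liminf$, and the signed-distance construction plus multiplicative rescaling for the recovery sequence. The only technical variations are that the paper first truncates at level $1+\delta$ to reduce to $L^\infty$-bounded sequences (instead of your $L^4$ bound), and for the upper bound uses a quasi-optimal profile supported on a finite interval $[-T,T]$ with exact boundary values $0,1$ (instead of your exponential-decay-plus-cutoff); both choices are equivalent in difficulty. One small correction to your discussion: the standard slicing argument the paper cites (see~\cite{braides}) \emph{does} yield the sharp constant $\bsigma$ --- one slices in a fixed direction, localizes, and takes a supremum of measures over partitions --- so the blow-up formulation you sketch is an alternative, not a necessity.
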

\begin{proof}
Since $\jj_\eps(\min(\eta_1,1+\delta), \min(\eta_2,1+\delta))\le \jj_\eps(\eta)$ for all $\delta>0$ and since the bound
$\int_{\Omega}\frac{1}{\eps}\left((\eta^\eps_1)^2+(\eta^\eps_2)^2-1\right)^2\le C$ implies that if $\min(\eta^\eps_i,1+\delta)$ (strongly)
converges to some $\eta$ then also $\eta_\eps$ converges to the same $\eta$, we can always assume that sequences which are bounded in energy 
are  bounded in $L^{\infty}$.

Now, we use again that the Gross-Pitaievskii potential controls the double well-potential $w_{K}$ (see~\eqref{wK}). We have, 
\begin{equation}
\label{mainestim}
\int_\Om  w_K(\eta_1) +\int_\Om w_K(\eta_2)\ \leq\ 2\int_\Om W_K(\eta_1,\eta_2).
 \end{equation}

 Thanks to~\eqref{mainestim} and the usual Modica-Mortola argument, we then have that from every sequence $( \eta_1^\eps,\eta_2^\eps)$ of bounded energy, we can extract a subsequence converging strongly in $L^1$ to some pair $(\eta_1,\eta_2)$. 
 Moreover, from the bound on the energy, we get that $\eta_1(x),\eta_2(x) \in \{0,1\}$ and $\eta_1(x)\eta_2(x)=0$ for almost every $x\in\R$. From the volume constraint and the strong convergence, we also deduce that $\int_{\Omega} \eta_1^2=\alpha_1$. 
 The lower bound inequality is then a standard application of the slicing technique (see \cite{braides,AftRoyo,GolRoyo} for instance). The upper bound is also standard. By approximation it is enough doing the construction for a smooth set $E$.
 Let $\delta>0$ be fixed then we can find $T>0$ and $(\eta_1^\delta,\eta_2^\delta)$ with  $\eta^{\delta}_1(-T)=\eta^{\delta}_2(T)=0$ and $\eta^{\delta}_1(T)=\eta^{\delta}_2(-T)=1$ with
\[\int_{-T}^T | (\eta^{\delta}_1)'|^2 +\lambda^2 | (\eta^{\delta}_2)'|^2+ W_K(\eta_1^\delta,\eta_2^\delta)\le \sigma_{\lambda,K} +\delta\]
 Let $d_E$ be the signed distance to $\partial E$.

 We then let
 \[\eta^{\eps}(x)=\eta^\delta\lt(\frac{d_E(x)}{\eps}\rt).\]
 Using the coarea formula it can be seen that $\eta^{\eps}$ converges strongly to $(\chi_E,1-\chi_E)$ and that
 \[\limsup_{\eps\to 0} \jj_\eps(\eta^\eps)\le (\sigma_{\lambda,K}+\delta)P(E,\Omega).\]
 Letting finally 
 \[ \tilde{\eta}^{\eps}=\lt(\frac{\sqrt{\alpha_1}}{\|\eta_1^{\eps}\|_{2}}\,\eta_1^\eps\,,\frac{\sqrt{\alpha_2}}{\|\eta_2^{\eps}\|_{2}}\, \eta_2^\eps\rt),\]
 we have that by definition $\tilde{\eta}^{\eps}$ satisfies the mass constraint. Moreover, using that $\|\eta_i^{\eps}\|_{2}= \alpha_i + O(\eps)$  (and therefore, $\tilde{\eta}^\eps_i=\eta_i(1+ O(\eps))$) we have
  \begin{equation}\label{estimsupdelta}
  \limsup_{\eps\to 0} \jj_\eps(\tilde{\eta}^{\eps})\le (\sigma_{\lambda,K}+\delta)P(E,\Omega).\end{equation}
% Indeed, this follows from 
%   \[|\nabla \tilde{\eta}^\eps_1|^2 +\lambda |\nabla \tilde{\eta}^\eps_2|^2=(1+O(\eps)) ( |\nabla \eta^\eps_1|^2 +\lambda^2  |\nabla \eta^\eps_2|^2)\]
%   \[ (\tilde{\eta}^\eps_1)^2(\tilde{\eta}^\eps_2)^2=(1+O(\eps))(\eta^\eps_1)^2(\eta^\eps_2)^2\]
%   and
%   \[\int_{\Omega}\left((\tilde{\eta}^\eps_1)^2+(\tilde{\eta}^\eps_2)^2-1\right)^2 = \int_{\Omega}\left((\eta^\eps_1)^2+(\eta^\eps_2)^2-1\right)^2 + O(\eps^2).\]
%   This last inequality comes from the fact that $A_\eps=\{|d_E|\le T \eps \}$ satisfies $|A_\eps|=O(\eps)$, and that outside $
%   A_\eps$, we have $\left((\eta^\eps_1)^2+(\eta^\eps_2)^2-1\right)^2=0$ and $\eta^\eps_i\in\{0,1\}$. From this we have
%   \begin{align*}
%    \int_{\Omega}\left((\tilde{\eta}^\eps_1)^2+(\tilde{\eta}^\eps_2)^2-1\right)^2&=\int_{A_\eps}\left((\tilde{\eta}^\eps_1)^2+(\tilde{\eta}^\eps_2)^2-1\right)^2+\int_{A_\eps^c}\left((\tilde{\eta}^\eps_1)^2+(\tilde{\eta}^\eps_2)^2-1\right)^2\\
%  &\le \int_{A_\eps} \left((\eta^\eps_1)^2+(\eta^\eps_2)^2-1\right)^2 +|A_\eps|O(\eps)+O(\eps^2)\\
%  &\le \int_{A_\eps} \left((\eta^\eps_1)^2+(\eta^\eps_2)^2-1\right)^2 +O(\eps^2). \end{align*}
Since $\delta$ is arbitrary in~\eqref{estimsupdelta}, this concludes the proof of the upper bound.
 \end{proof}

 \begin{remark}
  With a minor adaptation of this proof (see \cite{modicacontact,solci}), one could also deal with Dirichlet boundary conditions (that is impose $\eta_i^\eps=0$ on $\partial \Omega$). 
  Letting
  \[\gamma_i=\inf\left\{ \int_{0}^{+\infty} | \eta'_1|^2 +\lambda^2 | \eta'_2|^2+  \frac{1}{2}\left(\eta_1^2+\eta_2^2-1\right)^2 +(K-1) \eta_1^2\eta_2^2
: \eta_1(0)=\eta_2(0)=0,\  \lim_{+\infty}\eta_i=1\right\},\]
we would obtain as $\Gamma-$limit (at least for $\partial \Omega$ of class $C^2$).

\[\J_{Dir}(\eta_1,\eta_2)=\begin{cases}
                                \sigma_{\lambda,K} P(E,\Omega) +\gamma_1 \mathcal{H}^1(\partial \Omega\cap \partial E)+ \gamma_2 \mathcal{H}^1(\partial \Omega\cap \partial E^c)& \textrm{if } \eta_1=\chi_E=1-\eta_2\\
                                 & \textrm{ and } |E|=\alpha_1\\
                                +\infty & \textrm{otherwise}.
                               \end{cases}\]
                               Notice that in this case, by definition, $\gamma_i\le \sigma_{\lambda,K}+\gamma_j$. Hence, on the macroscopic level, there is always a contact angle. Complete wetting is still possible in the form of an infinitely thin layer of one of the phases (this would show up in the $\gamma_i$). The behavior of $\gamma_i$ has been recently studied in the physics literature \cite{SchayIndcrit}.
 \end{remark}
 \begin{remark}
  When $K\to +\infty$ as $\eps\to 0$ a refinement of the argument giving~\eqref{mainestim} gives also the optimal prefactor (see \cite[Prop. 6.2]{CoGoOtSe}).
 \end{remark}

\subsection{Study of the surface tension}
We study the asymptotics of the surface tension $\sigma_{\lambda,K}$ in the limit regimes $K\to1$ and $K\to+\oo$. We first consider the case $K\to 1$ (weak segregation).
\begin{proposition}\label{propK1}
There holds:
 \[\lim_{K\to 1}\frac{\bsigma}{\sqrt{K-1}}= \frac{2}{3} \frac{1-\lambda^3}{1-\lambda^2}.\]

\end{proposition}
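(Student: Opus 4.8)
The plan is to prove the two matching inequalities
\[
\limsup_{K\to 1}\frac{\bsigma}{\sqrt{K-1}}\ \le\ 2c_\lambda\ \le\ \liminf_{K\to 1}\frac{\bsigma}{\sqrt{K-1}},\qquad
c_\lambda:=\int_0^{\pi/2}\sin\theta\cos\theta\,\sqrt{\sin^2\theta+\lambda^2\cos^2\theta}\;d\theta ,
\]
the upper one by an explicit construction, the lower one by a geodesic/weighted‑length argument. The substitution $u=\sin^2\theta$ turns $c_\lambda$ into $\tfrac12\int_0^1\sqrt{\lambda^2+(1-\lambda^2)u}\,du=\frac{1-\lambda^3}{3(1-\lambda^2)}$, so $2c_\lambda=\tfrac23\,\frac{1-\lambda^3}{1-\lambda^2}$ is exactly the claimed value.

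For the upper bound I would test $\bsigma$ against the competitor supported on the unit circle: $\eta_1=\cos\theta(x)$, $\eta_2=\sin\theta(x)$ with $\theta\in C^\infty(\R,[0,\pi/2])$, $\theta(-\infty)=\pi/2$, $\theta(+\infty)=0$. This is admissible, and since $\eta_1^2+\eta_2^2\equiv1$ the energy collapses \emph{exactly} to the scalar functional $\int_\R(\sin^2\theta+\lambda^2\cos^2\theta)(\theta')^2+(K-1)\sin^2\theta\cos^2\theta$. By the equipartition (Modica--Mortola) inequality this equals $2\sqrt{K-1}\,c_\lambda$, the value being attained by the heteroclinic solution of $\sqrt{\sin^2\theta+\lambda^2\cos^2\theta}\,\theta'=-\sqrt{K-1}\,\sin\theta\cos\theta$ (whose orbit reaches the endpoints only at $x=\pm\infty$, at exponential rate). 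Hence $\bsigma\le 2\sqrt{K-1}\,c_\lambda$ for every $K>1$.

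For the lower bound I would first change variables $\zeta_2=\lambda\eta_2$, which isotropises the gradient: in the pair $(\eta_1,\zeta_2)$ one has $E_{\lambda,K}=\int_\R(\eta_1')^2+(\zeta_2')^2+\widetilde W_K$ with $\widetilde W_K=\tfrac12(1-\eta_1^2-\zeta_2^2/\lambda^2)^2+\tfrac{K-1}{\lambda^2}\eta_1^2\zeta_2^2$, a potential that, as $K\to1$, vanishes precisely on the \emph{convex} quarter‑ellipse $\mathcal E=\{\eta_1^2+\zeta_2^2/\lambda^2=1\}\cap\R_+^2$ joining $(0,\lambda)$ and $(1,0)$. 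For a minimiser $\eta^K$ the upper bound gives $E_{\lambda,K}(\eta^K)=\bsigma\le2\sqrt{K-1}\,c_\lambda\to0$, and Young's inequality $(p^2+q^2)+W\ge2\sqrt W\sqrt{p^2+q^2}$ gives $E_{\lambda,K}(\eta^K)\ge2\int_{\widetilde\gamma}\sqrt{\widetilde W_K}\,d\ell$, where $\widetilde\gamma=(\eta_1,\zeta_2):\R\to\R^2_+$ and $d\ell$ is Euclidean arclength. From this one first checks that $\eta^K$ is bounded and that a deep excursion of $\widetilde\gamma$ away from $\mathcal E$ (reaching distance $\delta$) would contribute at least $c\,\delta^2$ to $\int_{\widetilde\gamma}\sqrt{\widetilde W_K}\,d\ell$, so that for $K$ close enough to $1$ the curve $\widetilde\gamma$ stays in the tube $\{\mathrm{dist}(\cdot,\mathcal E)\le\delta\}$. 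On that tube (for $\delta$ below the curvature scale of $\mathcal E$) the nearest‑point projection $\pi_{\mathcal E}$ is smooth and $(1+C\delta)$‑Lipschitz, hence $d\ell(\pi_{\mathcal E}\circ\widetilde\gamma)\le(1+C\delta)\,d\ell(\widetilde\gamma)$; writing $\pi_{\mathcal E}\circ\widetilde\gamma=(\cos\phi(x),\lambda\sin\phi(x))$ with $\phi$ sweeping $[0,\pi/2]$, using $\widetilde W_K\ge\tfrac{K-1}{\lambda^2}\eta_1^2\zeta_2^2\ge(1-C_\varepsilon\delta)(K-1)\sin^2\phi\cos^2\phi$ on the set $\{\phi\in[\varepsilon,\pi/2-\varepsilon]\}$, and $d\ell(\pi_{\mathcal E}\circ\widetilde\gamma)=|\phi'|\sqrt{\sin^2\phi+\lambda^2\cos^2\phi}\,dx$, one obtains $\int_{\widetilde\gamma}\sqrt{\widetilde W_K}\,d\ell\ge(1-C\delta)\sqrt{K-1}\int_\varepsilon^{\pi/2-\varepsilon}\sin\phi\cos\phi\sqrt{\sin^2\phi+\lambda^2\cos^2\phi}\,d\phi$. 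Thus $\bsigma/\sqrt{K-1}\ge 2(1-C\delta)\bigl(c_\lambda-O(\varepsilon^2)\bigr)$ in the limit $K\to1$; letting $\delta\to0$ and then $\varepsilon\to0$ concludes.

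The genuinely delicate point is the \emph{sharp constant} in the lower bound. The naive route — Young's inequality in the original variables together with $W_K\ge(K-1)\eta_1^2\eta_2^2$ — is useless, since the resulting weighted‑length problem has the degenerate zero‑cost path along the coordinate axes; and if one instead bounds the anisotropic Dirichlet term pointwise by its angular part one only recovers the strictly weaker constant $\lambda/(1+\lambda)<c_\lambda$ (equality only at $\lambda=1$). The change of variables $\zeta_2=\lambda\eta_2$ is precisely what repairs this: it transfers the anisotropy from the metric into the shape of the valley, turning the valley into a convex curve onto which nearest‑point projection is nonexpansive up to curvature corrections, which is what regenerates the correct constant $c_\lambda$. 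The remaining care points — boundedness of $\eta^K$ and uniformity of the projection estimate as $\phi\to0,\pi/2$ — are handled by discarding the small windows $[0,\varepsilon]$, $[\pi/2-\varepsilon,\pi/2]$, which contribute only $O(\varepsilon^2)$ to $c_\lambda$.
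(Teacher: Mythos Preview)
Your argument is correct and complete in outline, and the upper bound is essentially identical to the paper's. The lower bound, however, is obtained by a genuinely different route.

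The paper first rescales $x=(K-1)^{-1/2}y$, so that
\[
\frac{\bsigma}{\sqrt{K-1}}=\inf\left\{\int_\R|\eta_1'|^2+\lambda^2|\eta_2'|^2+\eta_1^2\eta_2^2+\frac1{2(K-1)}(\eta_1^2+\eta_2^2-1)^2\right\},
\]
and then invokes a $\Gamma$-convergence argument (the diverging coefficient $\frac1{2(K-1)}$ forces the constraint $\eta_1^2+\eta_2^2=1$ in the limit). The limit problem lives on the unit circle, where the substitution $\eta_1=\cos\phi$ reduces it to a one-dimensional Modica--Mortola computation, yielding the constant directly. No projection, no tubular neighbourhood estimates, no $\varepsilon$-truncation are needed; the penalisation does the work.

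You instead keep the original functional, pass to isotropic coordinates $(\eta_1,\zeta_2)=(\eta_1,\lambda\eta_2)$, apply Young's inequality to obtain a weighted-length lower bound, and then argue that for $K$ close to $1$ the minimising curve is trapped in a tube around the valley ellipse, onto which you project. This is more hands-on and entirely self-contained (no $\Gamma$-convergence black box), but it is also more technical: you have to control excursions from the valley, justify the Lipschitz bound for the nearest-point projection, and discard the endpoint windows $[0,\varepsilon]$, $[\pi/2-\varepsilon,\pi/2]$ to make the multiplicative error estimates work. Your observation that the naive geodesic bound in the original (anisotropic) variables fails, and that the change of variables $\zeta_2=\lambda\eta_2$ is what rescues the sharp constant, is a nice structural insight that the paper's rescaling approach obscures.

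Both approaches are valid; the paper's is shorter once one accepts the $\Gamma$-convergence step, while yours is more explicit.
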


\begin{proof}
Letting $x=(K-1)^{-1/2}y$ in the definition of $\bsigma$, we have
\[\frac{\bsigma}{\sqrt{K-1}}= \inf\left\{\int_{\R} | \eta'_1|^2 +\lambda^2 | \eta'_2|^2+  \eta_1^2\eta_2^2 +\frac{1}{2(K-1)}\left(\eta_1^2+\eta_2^2-1\right)^2
: \lim_{-\infty} \eta_1=0, \ \lim_{+\infty} \eta_1=1\right\}.\]
Hence, using similar $\Gamma-$convergence arguments as above (see also \cite{GolRoyo}), we get that,
\[\lim_{K\to 1}\frac{\bsigma}{\sqrt{K-1}}=\inf\left\{\int_{\R} | \eta'_1|^2 +\lambda^2 | \eta'_2|^2+  \eta_1^2\eta_2^2 
: \lim_{-\infty} \eta_1=0, \ \lim_{+\infty} \eta_1=1, \eta_2^2=(1-\eta_1^2)\right\}.\]
Letting $\eta_1=\cos \phi$ for $\lim_{-\infty} \phi= \frac{\pi}{2}$ and $\lim_{+\infty} \phi =0$, and using that $a^2+b^2\ge 2 ab$, we find
\begin{align*}
 \lim_{K\to 1}\frac{\bsigma}{\sqrt{K-1}}&\ge 2 \int_{-\infty}^\infty |\phi'|( 1-(\cos \phi)^2 (1-\lambda^2))^{1/2} |\cos \phi \sin \phi|\\
 &=2 \left[ \frac{1}{3(1-\lambda^2)} (\cos^2(x)\lambda^2 -\cos^2(x)+1)^{3/2}\right]_0^{\frac{\pi}{2}}\\
 &= \frac{2}{3} \frac{1-\lambda^3}{1-\lambda^2}
\end{align*}
In order to reach equality it is enough to consider the unique solution of
\[\phi' (1-(\cos \phi)^2 (1-\lambda^2))^{1/2} = -\cos \phi \sin \phi,\]
with $\phi(0)=\frac{\pi}{4}$. The solution is indeed unique and defined on $\R$ since $\frac{\cos \phi \sin \phi}{(1-(\cos \phi)^2 (1-\lambda^2))^{1/2}}$ is Lipschitz continuous. Moreover, it is decreasing and has the right values at $\pm \infty$. 
 \end{proof}

\begin{remark}
Notice that the value of $\lim_{K\to 1}\frac{\bsigma}{\sqrt{K-1}}$ exactly coincides with the one found in \cite{barankov} (see also \cite{BVS}). Using~\eqref{mainestim}, it is moreover not hard to prove that there exists $C>0$ (not depending on $\lambda$), such that 
\[C^{-1} \sqrt{K-1}\le \bsigma\le C\sqrt{K-1}\]
when $K\to 1$.
\end{remark}
\begin{remark}
 Proposition \ref{propK1} also holds with the same proof for $\lambda=1$. The limit is then equal to $1$.
\end{remark}

Eventually, we consider the strong segregation asymptotics, $K\to+\oo$ (notice that we recover the same scaling as the one predicted in the physics literature \cite{BVS}). 

\begin{proposition}\label{propKoo}
There exist $0<c\leq C$ such that  for $K>1$ and $\lambda>0$, there holds
\begin{equation}
\label{inegKoo}
\sigma_{\lambda,\oo} -C \lt(\dfrac{\lambda^{1/2}}{(K-1)^{1/4}} + \dfrac{1}{K^{1/2}}\rt) \ \leq \ \bsigma  \ \leq \ \sigma_{\lambda,\oo}-c\left(\frac{1}{K^{1/2}} + \dfrac{\lambda^{1/2}}{K^{1/4}}\rt),
\end{equation}
with 
\begin{align}
\sigma_{\lambda,\oo}&=\inf \left\{ \int_{\R} | \eta'_1|^2 +\lambda^2 | \eta'_2|^2+ \frac{1}{2}\left(\eta_1^2+\eta_2^2-1\right)^2 
: \lim_{-\infty} \eta_1=0, \ \lim_{+\infty} \eta_1=1,\ \eta_1\eta_2\equiv 0\right\},\nonumber\\
&= (1+\lambda)\cfrac {2\sqrt{2}}3 .\label{sigmalambdaoo}
\end{align}
\end{proposition}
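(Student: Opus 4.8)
The plan is to treat both bounds through the geodesic reformulation of the one-dimensional problem: for a functional $\int_{\R}|\eta_1'|^2+\lambda^2|\eta_2'|^2+W(\eta_1,\eta_2)$, the optimal transition energy equals $2\inf_\gamma\int_\gamma\sqrt{W}\,\sqrt{d\eta_1^2+\lambda^2\,d\eta_2^2}$, the infimum being over paths $\gamma$ joining the wells $(0,1)$ and $(1,0)$ inside $\R_+^2$. The inequality ``$\geq$'' is the Bogomolny/Modica-Mortola bound $|\eta_1'|^2+\lambda^2|\eta_2'|^2+W\geq 2\sqrt{W}\sqrt{|\eta_1'|^2+\lambda^2|\eta_2'|^2}$, and the equipartition identity of the preceding Proposition shows it is attained. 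To obtain~\eqref{sigmalambdaoo}: the competitors for $\sigma_{\lambda,\oo}$ satisfy $\eta_1\eta_2\equiv0$, so $\gamma$ runs along the two coordinate axes and must pass through the origin; its length then splits as $\lambda\int_0^1\sqrt{\tfrac12(1-t^2)^2}\,dt+\int_0^1\sqrt{\tfrac12(1-s^2)^2}\,ds=\tfrac{\sqrt2}{3}(1+\lambda)$, so $\sigma_{\lambda,\oo}=\tfrac{2\sqrt2}{3}(1+\lambda)$. (Equivalently, split $\R$ at the interface into two decoupled scalar Allen-Cahn half-line problems and apply the Bogomolny trick to each.)

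For the upper bound in~\eqref{inegKoo}, note that any competitor for $\sigma_{\lambda,\oo}$ carries no interaction energy, hence $\bsigma\leq\sigma_{\lambda,\oo}$; the point is the gain $c\,(K^{-1/2}+\lambda^{1/2}K^{-1/4})\asymp c\,\max(K^{-1/2},\lambda^{1/2}K^{-1/4})$, produced by a local modification near the interface through one of two model constructions, according as $\lambda\les K^{-1/2}$ or $\lambda\ges K^{-1/2}$. When $\lambda\ges K^{-1/2}$ one genuinely cuts the corner of $\gamma$ at the origin: inside $\{(K-1)\eta_1^2\eta_2^2\leq\delta\}$ the weight $\sqrt{W_K}$ is $\tfrac1{\sqrt2}(1+o(1))$, so replacing the two axis segments joining $(0,h_2)\to(0,0)\to(h_1,0)$ by the straight chord saves $\tfrac1{\sqrt2}\big[(h_1+\lambda h_2)-\sqrt{h_1^2+\lambda^2h_2^2}\big]$; under the constraint $h_1h_2\les(K-1)^{-1/2}$ this is optimized at $h_1\sim\lambda^{1/2}(K-1)^{-1/4}$, $h_2\sim\lambda^{-1/2}(K-1)^{-1/4}$ (admissible since $h_2\les1$ exactly when $\lambda\ges(K-1)^{-1/2}$), giving a gain of order $\lambda^{1/2}(K-1)^{-1/4}$, which dominates $K^{-1/2}$ here. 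When $\lambda\les K^{-1/2}$ one instead keeps $\eta_2$ close to its other well value on the layer $\{0<\eta_1<K^{-1/2}\}$, so that $\eta_1$ feels the relaxed potential $w_K$ of~\eqref{wK} rather than $\tfrac12(1-s^2)^2$; since $w_K$ lies strictly below $\tfrac12(1-s^2)^2$ on $\{s\les K^{-1/2}\}$ with a gap of order one, the Modica-Mortola computation $2\int_0^1(\sqrt{\tfrac12(1-s^2)^2}-\sqrt{w_K})\,ds$ yields a gain of order $K^{-1/2}$, while the extra Dirichlet cost of the $\eta_2$ reconnection ramp is $O(\lambda^2\sqrt K)=O(K^{-1/2})$ in this regime and, for a suitable ramp width, absorbed into the gain. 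Gluing the chosen modification to the optimal segregated profile and rescaling to enforce the limits at $\pm\oo$ produces a competitor with energy $\leq\sigma_{\lambda,\oo}-c(K^{-1/2}+\lambda^{1/2}K^{-1/4})$.

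For the lower bound I would run the reverse surgery. Take $\eta=(\eta_1,\eta_2)$ with $E_{\lambda,K}(\eta)\leq\bsigma+o(1)$; then $(K-1)\int_{\R}\eta_1^2\eta_2^2\leq E_{\lambda,K}(\eta)$ confines the overlap to a slab of length $O(1/K)$ (the set where both components exceed a fixed threshold), so one may select a cut point $x_0$ in it at which $\eta_1(x_0)\eta_2(x_0)$ is as small as the $L^2$ bound allows, truncate $\eta_1$ to zero on $(-\infty,x_0]$ and $\eta_2$ to zero on $[x_0,+\oo)$, and reconnect smoothly on layers of the relevant widths. The resulting $\tilde\eta$ satisfies $\tilde\eta_1\tilde\eta_2\equiv0$ and the correct limits at $\pm\oo$, hence is admissible for $\sigma_{\lambda,\oo}$ with $E_{\lambda,K}(\tilde\eta)$ equal to its $\sigma_{\lambda,\oo}$ energy; the energy lost in the surgery, namely the dropped Dirichlet terms plus the potential defect of order $\tfrac12$ created on the reconnection layers, is estimated by the same layer-width bookkeeping as above, tracking the anisotropy weight $\lambda^2$, to be $O(\lambda^{1/2}(K-1)^{-1/4}+K^{-1/2})$. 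Hence $\sigma_{\lambda,\oo}\leq E_{\lambda,K}(\tilde\eta)\leq E_{\lambda,K}(\eta)+C(\lambda^{1/2}(K-1)^{-1/4}+K^{-1/2})$, and infimizing over $\eta$ gives the left inequality in~\eqref{inegKoo}. Equivalently, in geodesic terms: any $\gamma$ that beats the axis path must enter $\{\eta_1\eta_2\ges(K-1)^{-1/2}\}$, where $\sqrt{W_K}$ exceeds $\sqrt{\tfrac12(1-\eta_1^2-\eta_2^2)^2}$ by the interaction, and the excess length this forces caps the corner-cut gain at the same order.

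The step I expect to be the main obstacle is the quantitative bookkeeping in both directions: converting the heuristic optimization of the overlap layer (shape, width, amplitude) and of the cut point and reconnection into clean inequalities with the right powers, and with constants uniform in $(\lambda,K)$ over the whole regime $\lambda^2/K\to0$, including the degenerate limit $\lambda\to0$ in which the problem collapses onto the single Modica-Mortola problem with potential $w_K$. The anisotropic weight $\lambda^2$ on $|\eta_2'|^2$ is exactly what splits the correction into the scales $K^{-1/2}$ and $\lambda^{1/2}K^{-1/4}$ and forces the case split near $\lambda\sim K^{-1/2}$, where neither model construction is sharp, so matching the two estimates there will require some extra care.
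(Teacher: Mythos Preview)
Your computation of $\sigma_{\lambda,\infty}$ and your upper bound are essentially the paper's argument, recast in the Bogomolny/geodesic picture. The paper does not pass to the path formulation: for the identity it splits at the point where $\eta=(0,0)$ into two half-line scalar Allen--Cahn problems (exactly your parenthetical ``equivalently''), and for the upper bound it writes down explicit overlapping profiles. In the regime $\lambda\gtrsim K^{-1/2}$ it takes $\eta_1(x)=[\tanh(x/\sqrt2)]_+$ and $\eta_2(x)=[\tanh((\delta-x)/(\sqrt2\lambda))]_+$, computes the energy as $\sigma_{\lambda,\infty}+\int_0^\delta(K\eta_1^2\eta_2^2-\tfrac12)$ and optimises in $\delta$, landing on $\delta\sim(\lambda^2/K)^{1/4}$ --- precisely your corner-cut with $h_1\sim\lambda^{1/2}K^{-1/4}$, $h_2\sim\lambda^{-1/2}K^{-1/4}$. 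In the regime $\lambda\ll K^{-1/2}$ it keeps $\eta_1$ and replaces $\eta_2$ by a linear ramp of width $K^{-1/2}$, which is your second construction. So on these parts there is no genuine difference in method.

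For the lower bound the routes diverge. The paper does \emph{not} perform surgery on $\eta$ to manufacture a competitor for $\sigma_{\lambda,\infty}$. Instead it locates the crossing point $x_0$ where $\eta_1(x_0)=\lambda\eta_2(x_0)=:m$, takes the maximal interval $I_0\ni x_0$ on which $m/2\le\eta_1,\lambda\eta_2\le 2m$, and splits $E_{\lambda,K}(\eta)=E_-+E_0+E_+$. On the outer pieces it applies the scalar Modica--Mortola bound with the relaxed well $w_K$, yielding $E_\pm\ge \tfrac{2\sqrt2}{3}\cdot(\text{resp. }1,\lambda)-O(m)-O(K^{-1/2})$. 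On $I_0$ it balances the Dirichlet lower bound $\gtrsim m^2/|I_0|$ (from $\int_{I_0}|\eta_1'|,\int_{I_0}\lambda|\eta_2'|\ge m/2$) against the interaction lower bound $\gtrsim (K-1)m^4|I_0|/\lambda^2$, giving $E_0\gtrsim\sqrt{K-1}\,m^3/\lambda$. Minimising the resulting expression in $m$ produces the $\lambda^{1/2}(K-1)^{-1/4}$ term directly. This is short and avoids all reconnection bookkeeping.

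Your surgery sketch is not wrong in principle, but as written it has a gap: the claim ``confines the overlap to a slab of length $O(1/K)$'' only bounds the measure of $\{\eta_1,\eta_2\ge c\}$ for a \emph{fixed} threshold $c$, and selecting a cut point where the product is ``as small as the $L^2$ bound allows'' does not by itself yield the scale $m\sim\lambda^{1/2}(K-1)^{-1/4}$ --- the anisotropy only enters once you compare $\eta_1$ with $\lambda\eta_2$, as the paper does. Likewise, your reconnection cost estimate is asserted rather than derived. If you want to push the surgery route through, the key missing ingredient is exactly the paper's choice of crossing level: work with $\eta_1=\lambda\eta_2$ rather than $\eta_1=\eta_2$ or a fixed threshold, so that the Dirichlet costs of the two truncations balance.
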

\begin{proof}
Let us establish the identity~\eqref{sigmalambdaoo}.
 Since any admissible pair $(\eta_1,\eta_2)$ with finite energy is continuous and satisfies $\eta_1(x)\to1$ as $x\to+\oo$ and $\eta_2(x)\to 1$ as $x\to -\oo$, there exists $x\in \R$ such that $(\eta_1,\eta_2)(x)=0$. Using translation and symmetry, we see that 
\[
\sigma_{\lambda,\oo}\ = \ \gamma_{1} + \gamma_{\lambda},
\]
with 
\[
\gamma_{\lambda}\ =\ \inf \left\{ \int_0^{+\oo}    \lambda^2 | \eta'|^2+ \frac{1}{2}\left(\eta^2-1\right)^2  \ :\ \eta \in W^{1,2}_{loc}([0,+\infty)),\ 
\eta(0)=0, \ \lim_{+\infty} \eta=1\right\}.
\]
The classical Modica-Mortola procedure applies to this minimization problem. For $\lambda>0$  we have (see {\it e.g.} Chapter 6 in~\cite{braidesbook}), 
\begin{equation*} %\label{modmort-m1}
 \gamma_\lambda\ =\ 2\lambda^2 \int_0^1\sqrt{\dfrac{(1-s^2)^2}{2\lambda^2}}\, ds \ =\   \frac{2\sqrt{2}}{3} \lambda.
\end{equation*}
Moreover, the minimizer is given by the formula
\begin{equation}
 \label{modmort-m2}
\eta(x)\ =\ \tanh\left( \cfrac {x}{\sqrt{2}\lambda}\right)\qquad \mbox{for }x\geq 0.
\end{equation}
\medskip

Let us now establish the upper bound in~\eqref{inegKoo} (right inequality). If $\lambda \ges K^{-1/2}$ (that is $\frac{\lambda^{1/2}}{K^{1/4}}\ges \frac{1}{K^{1/2}}$), 
inspired by~\eqref{modmort-m2}, we consider the profile
\[
\eta_1(x)\ =\ \begin{cases}
0& \mbox{if }x<0,\\
\tanh\left( \cfrac{x}{\sqrt{2}}\right) & \mbox{if }x\geq 0,
\end{cases}
\qquad
\eta_2(x)\ =\ \begin{cases}
\tanh\left( \cfrac {\delta -x}{\sqrt{2}\lambda}\right)& \mbox{if }x\leq \delta,\\
0& \mbox{if }x>\delta,
\end{cases}
\]
The parameter $\delta>0$  tunes the width of the overlap between the two species (see Figure~\ref{fig:uppbound}). 

\begin{figure}[h]
\psfrag{0}{$0$}
\psfrag{1}{$1$}
\psfrag{x}{$x$}
\psfrag{d}{$\delta$}
\psfrag{e}{$\eta_1$}
\psfrag{f}{$\eta_2$}
\begin{center}
\includegraphics[scale=.5]{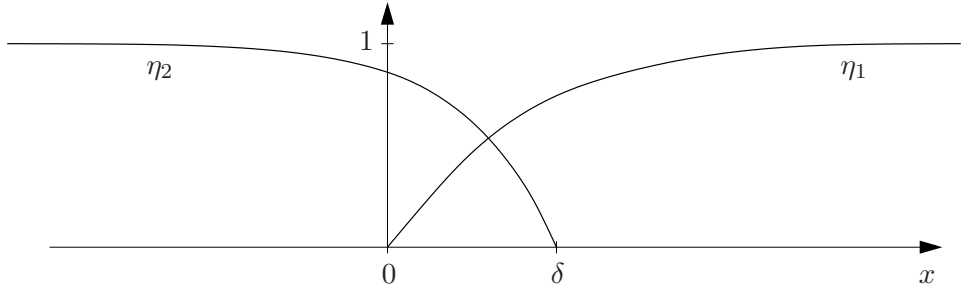}
\end{center}
\caption{\label{fig:uppbound} Profile of the competitor for the upper bound}
\end{figure}
The energy $f(\delta)$ of this competitor is an upper bound for $\sigma_{\lambda,K}$. We compute
\begin{eqnarray*}
f(\delta)& =& E_{\lambda,K}(\eta) \ =\ \int_{\R} | \eta'_1|^2 +\lambda^2 | \eta'_2|^2+  \frac{1}{2}\left(\eta_1^2+\eta_2^2-1\right)^2 +(K-1) \eta_1^2\eta_2^2\\
&=& \int_{\R} | \eta'_1|^2 +\lambda^2 | \eta'_2|^2+\int_{-\oo}^\delta  \frac{1}{2} (1-\eta_2^2)^2 +\int_0^{+\oo}    \frac{1}{2} (1-\eta_1^2)^2 +\int_0^\delta (K \eta_1^2\eta_2^2 -\frac{1}{2})\\
&=& \gamma_1+ \gamma_{\lambda}+\int_0^\delta \left(K \eta_1^2\eta_2^2 -\frac{1}{2}\right)\ =\  \sigma_{\lambda,\oo}+\int_0^\delta \left(K \eta_1^2\eta_2^2 -\frac{1}{2}\right).
\end{eqnarray*}
Using the expressions of $\eta_1$ and $\eta_2$ and the concavity of $t\in\R_+\mapsto \tanh(t)$, we get 
\[
\sigma_{\lambda,K}\ \leq\ f(\delta)\ \leq \  \sigma_{\lambda,\oo} -\frac{\delta}{2} + \dfrac {K\delta^5}{4\lambda^2} \int_0^1 s^2(1-s)^2\, ds  \ = \ \sigma_{\lambda,\oo} -\frac{\delta}{2} + \dfrac {K\delta^5}{120\lambda^2} .
\]
Optimizing in $\delta$, we obtain with $\delta=(12\lambda^2/K)^{1/4}$, 
\[
\sigma_{\lambda,K}\ \leq\ \sigma_{\lambda,\oo} - \dfrac45\left(\dfrac{12\lambda^2}K\right)^{1/4}\ =\  \sigma_{\lambda,\oo} -\dfrac{4 (12)^{1/4}}5\dfrac{\sqrt{\lambda}}{K^{1/4}}.\]
This yields the right inequality of~\eqref{inegKoo} with $c=\dfrac{4 (12)^{1/4}}5$.\\
If on the contrary $\lambda \ll K^{-1/2}$, we keep $\eta_1(x)=\left[\tanh(x/\sqrt{2})\right]_+$ as above and let 
\[\eta_2(x)=\begin{cases}
1  &x\le 0\\
   1-K^{1/2} x &  x\in [0,K^{-1/2}]\\
0  & x\ge K^{-1/2}. 
  \end{cases}
\]

The energy can then be estimated by 
\[E_{\lambda,K}(\eta_1,\eta_2)\le \gamma_1+ \lambda^2 K^{1/2}+\int_0^{K^{-1/2}} \left[\frac12 (1-\eta_2^2)^2+(K-1) \eta_1^2\eta_2^2 -\frac12\right].\]
Notice now that  $\lambda^2 K^{1/2}\ll K^{-1/2} $  and that since  $\tanh(x/\sqrt{2})\le x/\sqrt{2}$,
\[
\int_0^{K^{-1/2}} \left[ (1-\eta_2^2)^2+ 2 (K-1) \eta_1^2\eta_2^2 -1\right] \le \int_0^{K^{-1/2}} \left[  (1-K x^2)^2+ K^2 x^4 -1 \right], \]
 which implies,
\[
\int_0^{K^{-1/2}} \lt[\frac12 (1-\eta_2^2)^2+(K-1) \eta_1^2\eta_2^2 -\frac12\rt] \le \int_0^{K^{-1/2}} \frac{K^2}2 x^4 -K x^2= -\frac7{10} K^{-1/2}.\]
Putting all these estimates together gives the upper bound
\[ \bsigma  \ \leq \ \sigma_{\lambda,\oo}-c K^{-1/2}.\]
\medskip
We now turn to the proof of the lower bound (left inequality of~\eqref{inegKoo}). 

Let us consider an admissible pair $\eta=(\eta_1,\eta_2)\in W^{1,2}_{loc}(\R,\R^2)$ with finite energy. Without loss of generality, we may assume $0<\eta_1(x),\eta_2(x)<1$ for $x\in \R$. By continuity of $\eta$ and from the conditions at infinity, there exists $x_0\in\R$ such that $\eta_1(x_0)=\lambda\eta_2(x_0)=m$ and a maximal interval $I_0=[x_0-\delta_-,x_0+\delta_+]$ such that $m/2 \leq \eta_1,\lambda \eta_2\leq 2m$ in $I_0$. We define $I_-=(-\oo,x_0-\delta_-)$ and $I_+=(x_0+\delta_+,+\oo)$ and split the integration in three parts: $E_{\lambda,K}(\eta)\ =\ E_- + E_0 + E_+$, with
\[
E_i\ =\  \int_{I_i}  |\eta_1'|^2 +\lambda^2 | \eta'_2|^2+ \frac{1}{2}\left(\eta_1^2+\eta_2^2-1\right)^2 +(K-1) \eta_1^2\eta_2^2,\qquad \mbox{for }i\in\{-,0,+\}.
\]
Using the Modica-Mortola trick, we have 
\[
E_+\ \geq\ \int_{x_0+\delta_+}^{+\oo} |\eta_1'|^2 + w_K(\eta_1)\ \geq\ 2\int_{\eta_1(x_0+\delta_+)}^1\sqrt{w_K(s)}\, ds. 
\]
Using $\eta_1(x_0+\delta_+)\leq 2m$ and $2\sqrt{w_K}\geq  \sqrt{2}(1-s^2)-\sqrt{2}\chi_{[0,K^{-1/2}]}$, we obtain,
\begin{equation}
\label{lowbnd1}
E_+\ \geq\ \sqrt2\int_{2m}^1 (1-s^2)\, ds - \sqrt{2} K^{-1/2} \ \geq\ \dfrac{2\sqrt{2}}3 - 2\sqrt{2}m  - \sqrt{2} K^{-1/2}.
\end{equation}
Similarly, 
\begin{equation}
\label{lowbnd2}
E_-\ \geq\ \dfrac{2\sqrt{2}\lambda }3 - 2\sqrt{2} m  - \sqrt{2} K^{-1/2}.
\end{equation}
Now we consider the middle part. Since $\min\left( \int_{I_0}|\eta_1'|, \int_{I_0}\lambda |\eta_2'|\right)\ \geq m/2$, we have by Cauchy-Schwarz inequality, 
\[
\int_{I_0}  |\eta_1'|^2 +\lambda^2 | \eta'_2|^2\ \geq\ \dfrac{m^2}{4(\delta_+-\delta_-)}. 
\] 
On the other hand, since $\eta_1,\lambda\eta_2\geq m/2$ in $I_0$, we have
\[
\int_{I_0}  \frac{1}{2}\left(\eta_1^2+\eta_2^2-1\right)^2 +(K-1) \eta_1^2\eta_2^2\ \geq\ \dfrac{(\delta_+-\delta_-)(K-1)m^4}{16 \lambda^2}.
\]
Optimizing with respect to $(\delta_+-\delta_-)$, we obtain,
\begin{equation}
\label{lowbnd3}
E_0\ =\ \int_{I_0}  |\eta_1'|^2 +\lambda^2 | \eta'_2|^2 + \int_{I_0}  \frac{1}{2}\left(\eta_1^2+\eta_2^2-1\right)^2 +(K-1) \eta_1^2\eta_2^2\ \geq\  \dfrac{\sqrt{K-1}m^3}{4 \lambda}.
\end{equation}
Gathering together~\eqref{lowbnd1},\eqref{lowbnd2},\eqref{lowbnd3}, we get
\[
E_{\lambda,K}(\eta)\ \geq\ \dfrac{2\sqrt{2}}3(1+\lambda) - 4\sqrt{2} m +  \dfrac{\sqrt{K-1}m^3}{4 \lambda}  - 2\sqrt{2} K^{-1/2}.
\]
Minimizing with respect to $m$, the minimum is reached for $m=[16\sqrt{2}\lambda/(3\sqrt{K-1})]^{1/2}$, which yields, 
\[
\sigma_{\lambda,K}\ \geq\ \dfrac{2\sqrt{2}}3(1+\lambda) - \dfrac{32\,2^{3/4}}{3^{3/2}}\dfrac{\lambda^{1/2}}{(K-1)^{1/4}} - \dfrac{2\sqrt{2}}{K^{1/2}}.
\]
This establishes the left inequality of~\eqref{inegKoo} and ends the proof of Proposition~\ref{propKoo}.
\end{proof}

\section*{Acknowledgment}
We are grateful to an anonymous referee which pointed out a great simplification in the proof of Theorem \ref{maintheointro}. We thank G. De Philippis and V. Millot for helpful discussions about quantitative isoperimetric inequalities. B. Merlet was partially supported by the ANR research project GEOMETRYA, ANR-12-BS01-00
14-01
%\small

 \bibliography{BEC}
 
\end{document}